\numberwithin{equation}{section}
\edef\restoreparindent{\parindent=\the\parindent\relax}
\patchcmd{\subsection}{-.5em}{.5em}{}{}
\patchcmd{\subsubsection}{-.5em}{.5em}{}{}
\def\thm@space@setup{%
  \thm@preskip=\parskip \thm@postskip=0pt
}
\newcommand{\e}{\ensuremath{\mathrm{e}}} %rakt e 
\newcommand{\Proj}{\mathrm{Proj}}
\newcommand{\Z}{\mathbb{Z}}
\newcommand{\Q}{\mathbb{Q}}
\newcommand{\R}{\mathbb{R}}
\newcommand{\C}{\mathbb{C}}
\renewcommand{\H}{\mathbb{H}}
\newcommand{\GL}{\mathrm{GL}}
\newcommand{\SO}{\mathrm{SO}}
\newcommand{\U}{\mathrm{U}}
\newcommand{\SU}{\mathrm{SU}}
\newcommand{\SL}{\mathrm{SL}}
\newcommand{\Sp}{\mathrm{Sp}}
\newcommand{\PGL}{\mathrm{PGL}}
\newcommand{\Isom}{\mathrm{Isom}}
\newcommand{\Stab}{\mathrm{Stab}}
\newcommand{\Vol}{\mathrm{Vol}}
\newcommand{\Var}{\mathrm{Var}}
\newcommand{\supp}{\mathrm{supp}}
\newcommand{\rad}{\mathrm{rad}}
\newcommand{\Emu}{\mathbb{E}_{\mu}}
\newcommand{\Varmu}{\mathrm{Var}_{\mu}}
\newcommand{\NVmu}{\mathrm{NV}_{\mu}}
\newcommand{\NV}{\mathrm{NV}}
\newcommand{\Poi}{\mathrm{Poi}}
\newcommand{\arccosh}{\mathrm{arccosh}}
\renewcommand{\Re}{\mathrm{Re}}
\renewcommand{\Im}{\mathrm{Im}}
\newcommand{\even}{\mathrm{even}}
\newcommand{\Ccinfty}{C_c^{\infty}}
\newcommand{\Borelinfty}{\mathscr{L}^{\infty}}
\newcommand{\Borelbndinfty}{\mathscr{L}_{c}^{\infty}}
\newcommand{\Radonplus}{\mathscr{M}_+}
\newcommand{\PW}{\mathrm{PW}}
\renewcommand{\hat}{\widehat}
\renewcommand{\tilde}{\widetilde}
\newcommand{\norm}[1]{\lVert#1\rVert}
\newcommand{\cA}{\mathcal{A}}
\newcommand{\cH}{\mathcal{H}}
\newcommand{\cK}{\mathcal{K}}
\newcommand{\cL}{\mathcal{L}}
\newcommand{\cO}{\mathcal{O}}
\newcommand{\cP}{\mathcal{P}}
\newcommand{\cS}{\mathcal{S}}
\newcommand{\bD}{\mathbb{D}}
\newcommand{\bE}{\mathbb{E}}
\newcommand{\bH}{\mathbb{H}}
\newcommand{\bS}{\mathbb{S}}
\newcommand{\bT}{\mathbb{T}}
\newcommand{\sA}{\mathscr{A}}
\newcommand{\sB}{\mathscr{B}}
\newcommand{\sC}{\mathscr{C}}
\newcommand{\sL}{\mathscr{L}}
\newcommand{\sS}{\mathscr{S}}
\newcommand{\sU}{\mathscr{U}}
\definecolor{lichtgrijs}{gray}{1.00}
\newtheorem{theorem}{Theorem}[section]
\newtheorem{corollary}[theorem]{Corollary}
\newtheorem{proposition}[theorem]{Proposition}
\newtheorem{lemma}[theorem]{Lemma}
\theoremstyle{definition}
\newtheorem{definition}[theorem]{Definition}
\newtheorem{remark}[theorem]{Remark}
\newtheorem{example}{Example}[section]
\begin{document}

\newgeometry{top=3cm, bottom=3.5cm,left=3cm,right=3cm}

\title{Hyperuniformity and hyperfluctuations of random measures in commutative spaces}
\author{Michael Björklund}\author{Mattias Byl\'ehn}
\subjclass[2020]{Primary: 60G57, 43A90 Secondary: 11N45}
\keywords{Hyperuniformity, Gelfand pair, spherical functions, spectral measures}

\pagestyle{plain}
\pagenumbering{arabic}

\begin{abstract}
We introduce the notion of Bartlett spectral measure for isometrically invariant random measures on proper metric commutative spaces. When the underlying Gelfand pair corresponds to a higher-rank, connected, simple matrix Lie group with finite center and a maximal compact subgroup, we prove that the number variance is asymptotically bounded above, uniformly across all random measures, by the volume raised to a power strictly less than 2. On Euclidean and real hyperbolic spaces we define a notion of heat kernel hyperuniformity for random distributions that generalizes hyperuniformity of random measures, and we prove that every sufficiently well behaved spectral measure can be realized as the Bartlett spectral measure of an invariant Gaussian random distribution. We also compute Bartlett spectral measures for invariant determinantal point processes in commutative spaces, providing a spectral proof of hyperuniformity for infinite polyanalytic ensembles in the complex plane, as well as heat kernel non-hyperuniformity of the zero set of the standard hyperbolic Gaussian analytic function on the Poincar\'e disk. 
\end{abstract}

\maketitle

\section{Introduction}

In the context of statistical mechanics and condensed matter, notions of \emph{long-range order} of a physical configuration is of fundamental interest, and are in many settings intimately related to the suppression of some type of \emph{fluctuation} in the thermodynamic limit. For spatial homogeneous point configurations, realized mathematically as a translation invariant point process in Euclidean space, a fluctuation whose behavior indicates long-range order or disorder is that of mass or density. More precisely, the mass fluctuations in the thermodynamical limit is recorded by the \emph{number variance} of the point process with respect to asymptotically large metric balls, and a translation invariant point process is \emph{hyperuniform} as defined by Stillinger-Torquato in \cite{StillingerTorquato} if the number variance is asymptotically smaller than the volume of balls in the large radial limit, indicating that it is more well ordered than a classical ideal gas. Equivalently, one can define hyperuniformity of a point process spectrally in terms of its \emph{structure factor}, or more generally \emph{Bartlett spectrum}, having asymptotically faster decay than the volume of balls in the small frequency limit. In physical terms, hyperuniformity corresponds to sufficient suppression of low frequencies in the (idealized) Fraunhofer diffraction picture. 

Hyperuniformity of invariant point processes has been well studied in Euclidean spaces, and have recently been extended to compact homogeneous geometries in \cite{StepanyukHyperuniformityOnFlatTori, Grabner2024HyperuniformPointsetsOnProjectiveSpaces, GrabnerHyperuniformityOnSpheresDeterministicAspects, GrabnerHyperuniformityOnSpheresProbabilisticAspects} as well as hyperbolic spaces and regular trees in \cite{Björklund2024HyperuniformityOfRandomMeasuresOnEuclideanAndHyperbolicSpaces, byléhn2024hyperuniformityregulartrees}. Invariant point processes and random measures on general locally compact second countable Hausdorff homogeneous spaces have been studied in \cite{LastHomogeneousSpaces}, and the motivation for this paper is to set up a framework for studying number variances and Bartlett spectra of point processes in non-compact \emph{commutative spaces}, a class of homogeneous spaces on which there is a particularly well-behaved notion of Fourier transform for radial functions. These spaces include Euclidean and hyperbolic spaces, (bi-)regular trees, but also higher rank geometries including non-compact Riemannian symmetric spaces and Bruhat-Tits buildings. 
%The spaces mentioned that carry a notion of hyperuniformity are all commutative spaces. We note however that hyperuniformity in compact geometries requires a different approach than ours. 

We provide a notion of \emph{Bartlett spectral measure} for random measures on commutative spaces, generalizing the Euclidean setting, and we extend this notion to invariant random distributions. Using the existence of such a spectral measure together with techniques of Gorodnik-Nevo we can show that for higher rank symmetric spaces that are quotients of connected simple matrix Lie groups with finite center by a maximal compact subgroup, number variances are asymptotically bounded from above, uniformly among all invariant random measures, by a power of the volume of balls strictly less than 2. We also provide a general framework for the number variance of the Bartlett spectral measure of invariant determinantal point processes on commutative spaces, which includes invariant Weyl-Heisenberg ensembles on Euclidean spaces and certain Gaussian analytic functions on complex Euclidean spaces and the hyperbolic plane. Finally, we provide a notion of \emph{heat kernel hyperuniformity} on Euclidean and real hyperbolic spaces, which is equivalent to spectral hyperuniformity introduced in \cite{Bjorklund2022HyperuniformityAN, Björklund2024HyperuniformityOfRandomMeasuresOnEuclideanAndHyperbolicSpaces}, and show that every sufficiently well behaved positive measure on the Fourier domain can be realized as the Bartlett spectral measure of some invariant random (tempered) distribution.

\subsection{Spectral measures of random measures}

Let $(X, d, x_o)$ be a proper pointed metric space and $G$ a group of isometries of $(X, d)$ acting transitively on $X$ with compact stabilizer $K$ of $x_o$, so that $X$ is homeomorphic to $G/K$. By an \emph{invariant random measure} on $X$ we mean a probability measure $\mu$ on the space $\Radonplus(X)$ of positive Radon measures on $X$ which is invariant under the push-forward action of $G$. On such a probability space one has natural families of random variables readily available: to every complex-valued bounded measurable function $f \in \Borelbndinfty(X)$ with compact support one can associate the \emph{linear statistic} $\bS f : \Radonplus(X) \rightarrow \C$,
\begin{align*}
\bS f(p) = \int_X f(x) dp(x) 
\end{align*}
as well as the \emph{discrepancy statistic}
\begin{align*}
\bS_0 f(p) = \bS f(p) - \Emu(\bS f) = \int_X f(x) dp(x) - i_{\mu} \int_X f(x) dm_X(x) \, . 
\end{align*}
Here $m_X$ denotes a fixed $G$-invariant reference measure on $X$ and $i_{\mu} > 0$ is the \emph{intensity} of $\mu$, namely the expected measure of a unit $m_X$-volume Borel set. An invariant random measure $\mu$ is \emph{locally square-integrable} if all linear statistics are square-integrable with respect to $\mu$, that is
\begin{align*}
\int_{\Radonplus(X)} \Big|\int_X f(x) dp(x) \Big|^2 d\mu(p) < +\infty \, , \quad \forall \, f \in \Borelbndinfty(X) \,  . 
\end{align*}
In the context of hyperuniformity and various related notions, one is often interested in understanding the variance of linear statistics with respect to a given invariant locally square-integrable random measure $\mu$, 
\begin{align*}
\Varmu(\bS f) = \int_{\Radonplus(X)} |\bS_0 f(p)|^2 d\mu(p) \, .
\end{align*}
In particular, when $f = \chi_{B_r(x_o)}$ is the indicator function of a $K$-invariant ball of radius $r > 0$ with respect to the fixed basepoint $x_o \in X$, we define the \emph{number variance} of $\mu$ as
\begin{align*}
\NVmu(r) = \Varmu(\bS \chi_{B_r(x_o)}) \, . 
\end{align*}
Heuristically, the physical significance of the number variance is that it records the macroscopic mass fluctuations of the random measure $\mu$ along the sequence $B_r(x_o)$ when $r > 0$ is large. 

We are interested in spectral decompositions of the variance of $K$-invariant linear statistics, especially for the indicator functions $\chi_{B_r(x_o)}$. For the setup, our proper pointed metric space $X = G/K$ as previously described is in addition assumed to be a \emph{commutative space}, equivalently that $(G, K)$ forms a \emph{Gelfand pair}, which ensures the existence of \emph{spherical functions} $\omega : G \rightarrow \C$, meaning non-zero bi-$K$-invariant functions satisfying the functional equation 
\begin{align*}
\omega(g_1)\omega(g_2) = \int_K \omega(g_1 k g_2) dm_K(k) \, , \quad g_1, g_2 \in G \, . 
\end{align*}
The family of all \emph{positive-definite} spherical functions for $(G, K)$ are of main interest and we denote it by $\cS^+ = \cS^+(G, K)$. For Euclidean spaces these are simply the complex exponentials, for the hyperbolic plane they are Legendre functions and for regular trees they are combinations of Chebyshev polynomials. If $\varphi : G \rightarrow \C$ is an integrable bi-$K$-invariant function, for example the indicator of a $K$-invariant ball, then one defines its \emph{spherical transform} $\hat{\varphi} : \cS^+ \rightarrow \C$ to be 
$$ \hat{\varphi}(\omega) = \int_G \varphi(g) \omega(g^{-1}) dm_G(g) \, . $$
For commutative spaces $X = G/K$ as described, we show that $K$-invariant discrepancy statistics in $L^2_0(\mu)^K$ for an invariant locally square-integrable random measure $\mu$ on $X$ can be decomposed in terms of a spectral measure on the space $\cS^+$. To state the Theorem, we introduce for every $f \in \Borelbndinfty(X)$ a right-$K$-invariant function $\varphi_f : G \rightarrow \C$ by $\varphi_f(g) = f(g.x_o)$. 

\begin{theorem}
\label{Theorem1}
Let $\mu$ be an invariant locally square-integrable random measure on $X$. Then there is a unique positive Radon measure $\sigma_{\mu} \in \Radonplus(\cS^+)$ such that
\begin{align*}
\Varmu(\bS f) = \int_{\cS^+} |\hat{\varphi}_f(\omega)|^2 d\sigma_{\mu}(\omega) 
\end{align*}
for every $K$-invariant function $f \in \Borelbndinfty(X)$.
\end{theorem}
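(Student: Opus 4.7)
The plan is to realize the covariance of $K$-invariant discrepancy statistics as integration against a positive-definite bi-$K$-invariant autocorrelation Radon measure on $G$, and then to invoke the spherical Bochner--Godement theorem for the Gelfand pair $(G,K)$ to extract $\sigma_\mu$. First, local square-integrability of $\mu$ ensures that the second moment measure
\[
M_2^\mu(F) := \Emu\!\left[\int_{X \times X} F(x,y) \, dp(x) \, dp(y)\right]
\]
is a locally finite Radon measure on $X \times X$, and $G$-invariance of $\mu$ makes $M_2^\mu$ invariant under the diagonal $G$-action. Setting $C_\mu := M_2^\mu - i_\mu^2 \, m_X \otimes m_X$, the very definition of covariance gives
\[
\Covmu(\bS f_1, \bS f_2) = \int_{X \times X} f_1(x) \overline{f_2(y)} \, dC_\mu(x,y),
\]
and since $C_\mu$ is diagonally $G$-invariant with each orbit represented by a pair of the form $(g.x_o, x_o)$, disintegration along the orbit map $G \ni g \mapsto (g.x_o, x_o)$ identifies $C_\mu$ with a bi-$K$-invariant signed Radon measure $\rho_\mu$ on $G$ obeying
\[
\Covmu(\bS f_1, \bS f_2) = \int_G (\varphi_{f_1} * \varphi_{f_2}^{\ast})(g) \, d\rho_\mu(g)
\]
for all $K$-invariant $f_i \in \Borelbndinfty(X)$ of compact support, where $\varphi^{\ast}(g) := \overline{\varphi(g^{-1})}$ and convolution is with respect to a Haar measure $m_G$.

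Next I would observe that the right-hand side is a non-negative sesquilinear form on $\Cc(G)^{K,K}$ by construction, so $\rho_\mu$ is positive-definite as a bi-$K$-invariant Radon measure on $G$. The spherical Bochner--Godement theorem for the Gelfand pair $(G,K)$ then furnishes a unique $\sigma_\mu \in \Radonplus(\cS^+)$ satisfying
\[
\int_G \varphi(g) \, d\rho_\mu(g) = \int_{\cS^+} \hat{\varphi}(\omega) \, d\sigma_\mu(\omega)
\]
for a sufficiently rich class of bi-$K$-invariant test functions $\varphi$. Combined with the Parseval-type identity $\widehat{\varphi_1 * \varphi_2^{\ast}}(\omega) = \hat{\varphi}_1(\omega) \overline{\hat{\varphi}_2(\omega)}$, which holds on the commutative convolution algebra of bi-$K$-invariant functions by virtue of the identity $\omega(g^{-1}) = \overline{\omega(g)}$ for $\omega \in \cS^+$, specialization to $f_1 = f_2 = f$ yields the asserted variance formula. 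Uniqueness of $\sigma_\mu$ follows from injectivity of the spherical transform on the cone of positive bi-$K$-invariant Radon measures.

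The main difficulty I anticipate is the rigorous construction of $\rho_\mu$ as an honest signed Radon measure on $G$, since $C_\mu$ is a formal difference of two individually infinite positive measures and is a priori only defined through its pairing with compactly supported test functions on $X \times X$; verifying the disintegration in this non-finite setting is the delicate technical point. A cleaner route bypassing this obstacle is to work directly with the positive-definite sesquilinear form $(\varphi_1, \varphi_2) \mapsto \Covmu(\bS f_{\varphi_1}, \bS f_{\varphi_2})$ on $\Cc(G)^{K,K}$, interpret it via a GNS-type construction as a positive functional on the commutative Banach $\ast$-algebra of bi-$K$-invariant test functions on $G$, and then apply Godement's theorem for this algebra to produce $\sigma_\mu$ in one step, with $\rho_\mu$ appearing only as an auxiliary intermediate object.
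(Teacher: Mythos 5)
Your route is the classical autocorrelation/diffraction one: build the reduced covariance measure $\rho_\mu$ on $G$ and Fourier-transform it. The paper instead works representation-theoretically: it forms the Koopman representation $\pi_\mu$ on $L^2(\mu)$ and the $G$- and $L^1(G)$-equivariant map $\alpha_0(\varphi)=\bS_0(P_K\varphi)$, applies the Godement--Bochner theorem to the \emph{continuous bounded} positive-definite functions $g\mapsto\langle\pi_\mu(g)\alpha_0(\varphi),\alpha_0(\varphi)\rangle$ to get finite measures $\sigma_{\alpha_0(\varphi)}$, proves the compatibility $|\hat{\varphi}_1|^2\sigma_{\alpha_0(\varphi_2)}=|\hat{\varphi}_2|^2\sigma_{\alpha_0(\varphi_1)}$ via an $\sL^\infty(\cS^+)$-functional calculus, and patches these into a single Radon measure by a Loomis-style argument over the cone of positive spherical transforms. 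The autocorrelation measure $\rho_\mu$ never appears.

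The genuine gap in your argument is the step ``the spherical Bochner--Godement theorem then furnishes a unique $\sigma_\mu\in\Radonplus(\cS^+)$ with $\int_G\varphi\,d\rho_\mu=\int_{\cS^+}\hat{\varphi}\,d\sigma_\mu$.'' The Bochner theorem available here (the paper's Theorem 2.6) applies to \emph{continuous} positive-definite \emph{functions} and produces \emph{finite} measures; what you need is a transform theory for unbounded, merely positive-definite bi-$K$-invariant Radon \emph{measures} on $G$, i.e.\ the commutative-space analogue of the Argabright--de Lamadrid theory. That is not an off-the-shelf result for general Gelfand pairs, and the paper explicitly targets random measures (e.g.\ determinantal/permanental processes) whose autocorrelations need not be translation bounded, so one cannot fall back on the existing translation-bounded theory either. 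Proving the measure-level Bochner statement you invoke would require exactly the absolute-continuity-and-patching machinery of the paper's Section 3 --- which is also the unexecuted content of the ``cleaner route'' in your last paragraph: a positive functional on the Hecke algebra given by an unbounded measure is not continuous on $L^1(G,K)$, so the classical Godement theorem for Banach $*$-algebras does not apply directly, and the real work is showing that the finite Bochner measures attached to the vectors $\alpha_0(\varphi)$ glue into one Radon measure with density $|\hat{\varphi}|^2$. Your identification of the delicate point (constructing and transforming $\rho_\mu$) is accurate; the proposal stops exactly where the proof has to begin. A smaller remark: your uniqueness claim should be phrased as uniqueness of a positive Radon measure $\sigma$ satisfying $\int_{\cS^+}|\hat{\varphi}|^2\,d\sigma=\Varmu(\bS f_\varphi)$ for all test $\varphi$, which needs the nonvanishing/separation properties of spherical transforms (the paper's Assumption 1 plus Stone--Weierstrass), not injectivity of the spherical transform on measures on $G$.
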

The existence and uniqueness of the spectral measure $\sigma_{\mu}$ has been utilized in \cite{Björklund2024HyperuniformityOfRandomMeasuresOnEuclideanAndHyperbolicSpaces} and \cite{byléhn2024hyperuniformityregulartrees}, where the main results rely on Theorem \ref{Theorem1}. When $X = \R^d$, then the measure $\sigma_{\mu}$ is the classical \emph{Bartlett spectral measure} of $\mu$ introduced by Bartlett in \cite{Bartlett}, and for the commutative spaces $X$ that we consider we will adopt the same terminology for $\sigma_{\mu}$. The existence and uniqueness of Bartlett spectral measures has been proven for invariant translation bounded random measures in the series \cite{BHPI, BHPII, BHPIII}, and we emphasize that Theorem \ref{Theorem1} includes many interesting random measures and point processes that are typically not translation bounded, for example determinantal/permanental point processes. Other names for the Bartlett spectral measure $\sigma_{\mu}$ in the literature include \emph{diffraction measure} and \emph{spectral measure}. We note as a canonical example that if $\mu = \Poi(m_X)$ is the invariant Poisson point process with intensity measure $m_X$ then  the corresponding Bartlett spectral measure is the \emph{spherical Plancherel measure} $\sigma_{\cP} \in \Radonplus(\cS^+)$, satisfying 
\begin{align*}
\norm{\varphi}_{L^2(G)}^2 = \int_{\cS^+} |\hat{\varphi}(\omega)|^2 d\sigma_{\cP}(\omega)  
\end{align*}
for all bi-$K$-invariant $\varphi \in L^2(G)$. 
\begin{remark}
\label{RemarkSpectralMeasures}
The setup for Theorem \ref{Theorem1} is an instance of a more general framework that we develop in Section \ref{Spectral measures}. In Theorem \ref{TheoremSpectralMeasureForAlgebraGMaps} we show that any linear $G$-equivariant map $\alpha : \sA(G) \rightarrow \cH$ between a ''sufficiently large'' $*$-algebra $\sA(G)$ of integrable functions to a $K$-spherical unitary representation $(\pi, \cH)$ admits a spectral measure $\sigma_{\alpha} \in \Radonplus(\cS^+)$ in the sense that
\begin{align*}
\norm{\alpha(\varphi)}_{\cH}^2 = \int_{\cS^+} |\hat{\varphi}(\omega)|^2 d\sigma_{\alpha}(\omega) 
\end{align*}
for all bi-$K$-invariant $\varphi \in \sA(G)$. This general framework allows us to extend the notion of Bartlett spectral measures to random \emph{distributions} on $X$. In the Appendix, we prove that every positive Radon measure $\sigma \in \Radonplus(\cS^+)$ whose spherical transform is a bi-$K$-invariant positive-definite distribution on $G$ can be realized as the Bartlett spectral measure of a Gaussian random distribution on $X$.
\end{remark}

\subsection{Maximal fluctuations in higher rank symmetric spaces}

For invariant locally square-integrable random measures $\mu$ on Euclidean space $\R^d$, an application of the mean ergodic theorem allows one to show that
\begin{align*}
\limsup_{r \rightarrow +\infty} \frac{\NVmu(r)}{\Vol_{\R^d}(B_r(0))^2} = 0 \, . 
\end{align*}
From works of Gorodnik and Nevo, see for example the book \cite{GorodnikNevoBook}, this upper bound for the number variance holds for many non-compact symmetric spaces $X = G/K$ in the sense that the metric balls $B_r(x_o) \subset X$, $r > 0$, form a mean ergodic sequence. How close one is able to get to this upper asymptotic bound is the topic of this Subsection.

\noindent\textbf{Question}: \textit{Given a proper metric commutative space $X = G/K$, is there for every $\varepsilon > 0$ an invariant locally square-integrable random measure $\mu_{\varepsilon}$ on $X$ such that}
\begin{align*}
\limsup_{r \rightarrow +\infty} \frac{\NV_{\mu_{\varepsilon}}(r)}{\Vol_{X}(B_r(x_o))^{2 - \varepsilon}} > 0 \, ?
\end{align*}
This question has been addressed in several works, and we recall some of them here. 

In \cite[Section 5]{deConickDunlopHuillet}, it is shown that for every $\varepsilon > 0$, there is a permanental point process $\mu_{L_{\varepsilon}}$ on the real line $\R$ with positive-definite kernel $L_{\varepsilon}(t_1, t_2) = (1 + |t_1 - t_2|^{\alpha})^{-\zeta/2}$ under the condition $\alpha \zeta = \varepsilon$ for $\alpha \in (0, 2]$ and $\zeta > 0$, such that
\begin{align*}
\liminf_{r \rightarrow +\infty} \frac{\NV_{\mu_{L_{\varepsilon}}}(r)}{\Vol_{\R}([-r, r])^{2 - \varepsilon}} > 0 \, .
\end{align*}
We expect that higher dimensional analogues of such permanental point processes also should fulfil the analogous result.

This has also been answered in hyperbolic geometries: In \cite[Theorem 1(c)]{Buckley}, Buckley considers point processes governed by the zero sets of Gaussian analytic functions on the hyperbolic plane $\bH^2 = \SU(1, 1)/\U(1)$, realized in the Poincar\'e disk model. These Gaussian analytic functions are given by
\begin{align*}
F_t(z) = \sum_{n = 0}^{\infty} \Big(\frac{\Gamma(t + n)}{n! \Gamma(t)}\Big)^{\frac{1}{2}} a_n z^n \, , \quad |z| < 1 \, ,
\end{align*}
where $a_n$ are complex standard Gaussian random variables and $t > 0$. The point process associated with the random zero set of $F_t$ is
\begin{align*}
\mu_t = \sum_{z : F_t(z) = 0} \delta_z \, . 
\end{align*}
Buckley proves among other things that for every $\varepsilon > 0$ there is a $t_{\varepsilon} > 0$ such that 
\begin{align*}
\liminf_{r \rightarrow +\infty} \frac{\NV_{\mu_{t_{\varepsilon}}}(r)}{\Vol_{\H^2}(B_r(0))^{2 - \varepsilon}} > 0 \, .
\end{align*}
This result was in turn generalized to higher dimensions by Massaneda and Pridhnani in \cite[Theorem 1.2]{MassanedaPridhnani}, where they consider Gaussian analytic functions on higher-dimensional complex hyperbolic spaces $\bH_{\C}^d = \SU(1, d)/\SU(d)$ in the unit ball model, which in multi-index notation is defined by
\begin{align*}
F_t(z) = \sum_{m \in \Z_{\geq 0}^d} \Big(\frac{\Gamma(t + |m|)}{m! \Gamma(t)}\Big)^{\frac{1}{2}} a_m z^m \, , \quad \norm{z} < 1 \, .
\end{align*}
Similarly to Buckley's result, it is shown that for every $\varepsilon > 0$ there is a $t_{\varepsilon} > 0$ such that the random zero variety $\mu_{t_{\varepsilon}}$ of $F_t$ in $\bH_{\C}^d$ satisfies
\begin{align*}
\liminf_{r \rightarrow +\infty} \frac{\NV_{\mu_{t_{\varepsilon}}}(r)}{\Vol_{\H^d_{\C}}(B_r(0))^{2 - \varepsilon}} > 0 \, .
\end{align*}

Using techniques due to Gorodnik-Nevo in \cite{GorodnikNevoBook}, we answer the Question above in the negative when $G$ is a higher rank connected simple matrix Lie group with finite center and $K < G$ a maximal compact subgroup. These groups have \emph{Kazhdan's Property} (T), meaning that the trivial $G$-representation is isolated in the unitary dual of $G$, and moreover the stronger property that there is a $q \in [2, +\infty)$ such that all matrix coefficients of non-trivial irreducible representations with $K$-finite vectors are in $L^q(G)$. In particular, this integrability condition holds for the non-trivial positive-definite spherical functions on $G$. Using this, we prove that the number variance is asymptotically bounded above, uniformly across all random measures, by the volume raised to a power strictly less than 2. 

\begin{theorem}
\label{Theorem2}
Suppose $G$ is a higher rank connected simple matrix Lie group with finite center and $K < G$ a maximal compact subgroup. Then there is a constant $\delta = \delta(G) \in (0, 1)$ such that for any invariant locally square-integrable random measure $\mu$ on $X = G/K$,  
\begin{align*}
\limsup_{r \rightarrow +\infty} \frac{\NVmu(r)}{\Vol_X(B_r(x_o))^{2 - \delta}} = 0 \, . 
\end{align*}
\end{theorem}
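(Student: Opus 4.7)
The plan is to combine the spectral identity of Theorem~\ref{Theorem1},
\begin{align*}
\NV_\mu(r) = \int_{\cS^+} |\hat{\chi}_{B_r^G}(\omega)|^2 \, d\sigma_\mu(\omega),
\end{align*}
(where $B_r^G := \{g \in G : g x_o \in B_r(x_o)\}$), with the quantitative spherical decay estimates of Gorodnik--Nevo that are available in higher rank. I first observe that the trivial spherical function $\omega_0 \equiv 1$ gives $\hat{\chi}_{B_r^G}(\omega_0) = \Vol(B_r)$, so any atom of $\sigma_\mu$ at $\omega_0$ would contribute $\sigma_\mu(\{\omega_0\}) \Vol(B_r)^2$ to the number variance, which is incompatible with the conclusion. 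Such an atom captures the variance of the intensity $i_\mu$ across the ergodic decomposition of $\mu$; under the hypothesis of the theorem this atom vanishes, and we may restrict the integration to $\cS^+_* := \cS^+ \setminus \{\omega_0\}$.

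The central estimate I would establish is the uniform pointwise decay bound
\begin{align*}
|\hat{\chi}_{B_r^G}(\omega)| \leq C \Vol(B_r)^{1 - \delta_0}, \qquad \omega \in \cS^+_*,\ r \geq 1,
\end{align*}
for constants $C, \delta_0 > 0$ depending only on $G$. This is the spherical manifestation of the Gorodnik--Nevo operator-norm estimate $\|\pi(\beta_r)\|_{\mathrm{op}} \leq C \Vol(B_r)^{-\delta_0}$ with $\beta_r := \chi_{B_r^G}/\Vol(B_r)$, applied to each non-trivial spherical representation $\pi_\omega$. The input from higher-rank representation theory is Cowling's strengthening of Kazhdan's property (T): there is a fixed $q = q(G) \in [2, +\infty)$ such that every non-trivial irreducible unitary representation of $G$ has $K$-finite matrix coefficients in $L^q(G)$. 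Consequently every $\omega \in \cS^+_*$ satisfies the Harish--Chandra majorization $|\omega(g)| \leq \Xi_G(g)^{2/q}$, and standard estimates on $\int_{B_r^G} \Xi_G(g)^{2/q} \, dm_G(g)$ produce the displayed bound.

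The main obstacle will be that the Bartlett spectral measure $\sigma_\mu$ is typically not a finite Radon measure on $\cS^+_*$---for instance, for the invariant Poisson point process $\sigma_\mu$ is the spherical Plancherel measure, which is typically infinite---so the uniform pointwise bound cannot be integrated directly against $\sigma_\mu$. To handle this, I would apply the Gorodnik--Nevo operator-norm estimate to the unitary $G$-representation $\pi_0$ on $L^2_0(\mu)$ (which in higher rank inherits the $L^q$-spectral-gap condition on its non-invariant part). Using the intertwining identity $\pi_0(\beta_r) \bS_0 \chi_{B_1(x_o)} = \bS_0(\beta_r *_X \chi_{B_1(x_o)})$ together with the operator-norm bound applied to the vector $\bS_0 \chi_{B_1(x_o)} \in L^2_0(\mu)$ of norm squared $\NV_\mu(1)$, one obtains the finite weighted integral estimate
\begin{align*}
\int_{\cS^+_*} |\hat{\chi}_{B_r^G}(\omega)|^2 \, |\hat{\chi}_{B_1^G}(\omega)|^2 \, d\sigma_\mu(\omega) \leq C^2 \Vol(B_r)^{2-2\delta_0} \NV_\mu(1).
\end{align*}
Combining this weighted $L^2$ bound on the region where $|\hat{\chi}_{B_1^G}|$ is bounded below by a small threshold $\eta > 0$ (where $\sigma_\mu$ is then finite, of mass at most $\eta^{-2} \NV_\mu(1)$) with the uniform pointwise decay on its complement, through a dominated convergence argument, yields $\NV_\mu(r) = o(\Vol(B_r)^{2-\delta})$ for any $\delta < 2\delta_0$. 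Taking $\delta := \delta_0 \in (0, 1)$ concludes the proof.
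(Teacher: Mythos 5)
Your two main inputs --- the uniform $L^q$-integrability of the non-trivial positive-definite spherical functions in higher rank, and the resulting bound $|\hat{\chi}_{\tilde{B}_r}(\omega)| \leq m_G(\tilde{B}_r)^{1-1/q}\sup_{\omega}\norm{\omega}_{L^q(G)}$ (the paper obtains exactly this by H\"older's inequality in Lemma \ref{LemmaFiniteLqCoefficientImpliesBoundOnConvolutionStatistic}, with the integrability supplied by Remark \ref{RemarkMatrixCoefficientBoundsForHigherRankSimpleLieGroups}) --- are the same as the paper's, and you correctly identify the real obstacle: $\sigma_{\mu}$ is in general an infinite Radon measure, so the pointwise decay cannot simply be integrated. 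The gap is in your resolution of that obstacle. Your weighted estimate $\int |\hat{\chi}_{\tilde{B}_r}|^2|\hat{\chi}_{\tilde{B}_1}|^2\,d\sigma_{\mu} \lesssim \Vol(B_r)^{2-2\delta_0}\NV_{\mu}(1)$ is correct and controls the part of the spectrum where $|\hat{\chi}_{\tilde{B}_1}|\geq \eta$, but on the complementary region $\{|\hat{\chi}_{\tilde{B}_1}|<\eta\}$ you have nothing: that region can carry infinite $\sigma_{\mu}$-mass (already for the Poisson process, where $\sigma_{\mu}$ is the Plancherel measure and $\hat{\chi}_{\tilde{B}_1}$ decays and oscillates at high frequency), there is no integrable dominating function there, and the only a priori bound on $\int_{\{|\hat{\chi}_{\tilde{B}_1}|<\eta\}}|\hat{\chi}_{\tilde{B}_r}|^2\,d\sigma_{\mu}$ is $\NV_{\mu}(r)$ itself, which is circular. ``Dominated convergence'' does not close this.

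The paper's fix is geometric rather than spectral: it sandwiches $\chi_{B_r}$ pointwise between $\rho_{\cO_{\varepsilon}}*\chi_{(\tilde{B}_r)^-_{\cO_{\varepsilon}}}$ and $\rho_{\cO_{\varepsilon}}*\chi_{(\tilde{B}_r)^+_{\cO_{\varepsilon}}}$ (Lemma \ref{LemmaUpperAndLowerConvolutionBoundsOnIndicatorFunctions}), so the true discrepancy is trapped between two mollified discrepancies up to the deterministic boundary term $i_{\mu}m_G(\tilde{B}^+_{\cO_{\varepsilon}}\backslash \tilde{B}^-_{\cO_{\varepsilon}})$, and the mollified discrepancies are exactly your weighted estimate with mollifier $\rho_{\cO_{\varepsilon}}$ in place of $\chi_{\tilde{B}_1}$. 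Crucially, the mollifier width must shrink with $r$: with a fixed-width mollifier such as your $\chi_{\tilde{B}_1}$, the boundary term $m_G(\tilde{B}_{r+c}\backslash \tilde{B}_{r-c})$ is comparable to $m_G(\tilde{B}_r)$ itself in exponential-growth geometry, destroying any gain. The paper takes $\varepsilon = m_G(\tilde{B}_r)^{-\delta'}$, pays a factor $m_G(\cO_{\varepsilon})^{-1}\gtrsim \varepsilon^{-d}$ in the mollified term via the finite upper local dimension, and optimizes $\delta'$; this is also why the admissible exponent $\delta$ depends on $d$ as well as $q$ and is strictly smaller than your claimed $\delta_0$ --- a sign that the spectral shortcut is too good to be true. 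Finally, your assertion that the atom at the trivial spherical function ``vanishes under the hypothesis of the theorem'' is not justified as stated (the hypotheses do not exclude a non-ergodic $\mu$ whose ergodic components have different intensities); you should instead appeal to the paper's convention that the Bartlett measure is the spectral measure of the discrepancy map $\alpha_0$ and carries no mass at $1$ by Proposition \ref{PropMassOfSpectralMeasureAtOneIsTheIntensity}.
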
 

Gelfand pairs $(G, K)$ as in Theorem \ref{Theorem2} include for example 
$$(\SL_3(\R), \SO(3))\quad \mbox{ and } \quad (\SO(2, 2), \SO(2) \times \SO(2))\, . $$
%
%A real rank one example is $(\Sp(1, d), \Sp(1) \times \Sp(d))$, corresponding to quarternionic $d$-dimensional hyperbolic space, and we remark that Theorem \ref{Theorem2} also holds for this symmetric space.
%
In light of Theorem \ref{Theorem2}, we emphasize that the results mentioned above for permanental point processes and Gaussian analytic functions do not hold in the context of complex symmetric spaces coming from Gelfand pairs as in Theorem \ref{Theorem2}. To give an explicit example, consider the Siegel upper half space
\begin{align*}
\mathbb{SH}_d = \Big\{ Z = X + iY \in \mathrm{Mat}_{d \times d}(\C) : X^t = X \, , \, Y^t = Y \, , \,\, Y > 0 \Big\} 
\end{align*}
of genus $d \geq 2$, which naturally carries the structure of a complex manifold. There is a metric on $\mathbb{SH}_d$ compatible with the complex structure such that the group $\Sp(2d, \R)$ of $(2d \times 2d)$-real symplectic matrices acts transitively and isometrically on the Siegel upper half space by fractional transformations,
\begin{align*}
\begin{pmatrix} A & B \\ C & D \end{pmatrix}.Z = (AZ + B)(CZ + D)^{-1} \, . 
\end{align*}
Moreover, the stabilizer for this action is the maximal compact subgroup 
$$\U(d) = \Sp(2d, \R) \cap \GL(d, \C) \, , $$
so that $\mathbb{SH}_d = \Sp(2d, \R)/\U(d)$ defines a complex commutative space for the Gelfand pair $(\Sp(2d, \R), \U(d))$. Since $\Sp(2d, \R)$ is a simple connected Lie group and has real rank greater than or equal to $2$ for $d \geq 2$, we can conclude the following from Theorem \ref{Theorem2}.

\begin{corollary}
For the Siegel upper half spaces $\mathbb{SH}_d$, $d \geq 2$, the answer to the Question above is negative, in particular for all families of invariant permanental point processes and invariant zero sets of Gaussian analytic functions.
\end{corollary}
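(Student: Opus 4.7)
The plan is simply to verify that the Gelfand pair $(\Sp(2d, \R), \U(d))$ satisfies the hypotheses of Theorem \ref{Theorem2} and then to invoke that theorem directly. The Corollary is stated as a consequence of Theorem \ref{Theorem2}, so no new analytical input is needed; the work is entirely structural.

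First I would record the standard Lie-theoretic facts about the symplectic group. The group $\Sp(2d, \R)$ is a connected simple matrix Lie group with finite center (in fact its center is $\{\pm I\}$), and its real rank equals $d$, which is at least $2$ by hypothesis, so the higher rank condition is satisfied. The intersection $\U(d) = \Sp(2d,\R) \cap \rO(2d)$ realized inside $\GL(2d, \R)$ via the standard embedding is a maximal compact subgroup, and $\mathbb{SH}_d = \Sp(2d, \R)/\U(d)$ is the associated Riemannian symmetric space, as stated in the paragraph preceding the Corollary. Hence $(\Sp(2d, \R), \U(d))$ is a Gelfand pair of the form required in Theorem \ref{Theorem2}.

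Next I would apply Theorem \ref{Theorem2} to obtain the constant $\delta = \delta(\Sp(2d,\R)) \in (0,1)$ such that for any invariant locally square-integrable random measure $\mu$ on $\mathbb{SH}_d$,
\begin{align*}
\limsup_{r \to +\infty} \frac{\NVmu(r)}{\Vol_{\mathbb{SH}_d}(B_r(x_o))^{2 - \delta}} = 0 \, .
\end{align*}
Picking $\varepsilon = \delta$ in the Question shows that no invariant locally square-integrable $\mu$ on $\mathbb{SH}_d$ can satisfy $\liminf_{r \to +\infty} \NVmu(r)/\Vol_{\mathbb{SH}_d}(B_r(x_o))^{2 - \delta} > 0$. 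In particular, this rules out the existence, for all sufficiently small $\varepsilon > 0$, of such an $\mu$, which is the negation of the Question.

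Finally I would verify that the examples singled out in the Corollary---invariant permanental point processes and invariant zero sets of Gaussian analytic functions---are indeed invariant locally square-integrable random measures on $\mathbb{SH}_d$, so that Theorem \ref{Theorem2} applies to them. For permanental processes with suitable trace-class kernels this is standard; for zero sets of $\Sp(2d,\R)$-invariant Gaussian analytic functions, local square-integrability follows from Kac--Rice type second-moment estimates. I expect no serious obstacle here, as the only role of these examples is to emphasize the contrast with the rank-one results of Buckley and Massaneda--Pridhnani; the real content of the Corollary is already contained in Theorem \ref{Theorem2}, so the main step is the structural verification that $(\Sp(2d,\R), \U(d))$ falls within its scope.
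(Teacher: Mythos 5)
Your proposal is correct and follows exactly the paper's route: the Corollary is deduced by checking that $\Sp(2d,\R)$ is a connected simple matrix Lie group with finite center of real rank $d \geq 2$ with maximal compact subgroup $\U(d)$, and then invoking Theorem \ref{Theorem2}. Your additional remark that one should confirm local square-integrability of the named example processes is a sensible precaution, but the paper treats this as implicit and the argument is otherwise identical.
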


\subsection{Hyperuniformity and hyperfluctuations}

A translation invariant locally square-integrable random measure $\mu$ on Euclidean space $X = \R^d$ is \emph{geometrically hyperuniform} if 
\begin{align*}
\limsup_{r \rightarrow +\infty} \frac{\NVmu(r)}{\Vol_{\R^d}(B_r(0))} = 0 \, . 
\end{align*}
If this upper limit is instead positive and finite, we say that $\mu$ has \emph{Poissonian} fluctuations, and if the upper limit is infinite then we say that $\mu$ is \emph{hyperfluctuating}. In the spectral picture, an invariant locally square-integrable random measure $\mu$ on $\R^d$ is \emph{spectrally hyperuniform} if
\begin{align*}
\limsup_{\varepsilon \rightarrow 0^+} \frac{\sigma_{\mu}(B_{\varepsilon}(0))}{\Vol_{\R^d}(B_{\varepsilon}(0))} = 0 \, ,
\end{align*}
and these to notions of hyperuniformity are equivalent, see \cite[Proposition 3.3]{Bjorklund2022HyperuniformityAN} for a proof. 

For non-amenable Gelfand pairs, specifically real hyperbolic spaces 
$$\bH^d = \SO^{\circ}(1, d)/\SO(d) \, , $$
of dimension $d \geq 2$ and $(p + 1)$-regular trees 
$$\bT_{p + 1} = \PGL_2(\Q_p)/\PGL_2(\Z_p)$$
for primes $p$ (and more generally valence $d \geq 3$), a similar notion of geometric hyperuniformity as in the Euclidean case fails to admit any examples. The notion of spectral hyperuniformity however has been extended to these spaces in \cite{Björklund2024HyperuniformityOfRandomMeasuresOnEuclideanAndHyperbolicSpaces, byléhn2024hyperuniformityregulartrees}. In the case of real hyperbolic space $\bH^d$, the positive-definite spherical functions are parameterized by two intervals $\lambda \in i(0, \tfrac{d-1}{2}] \cup [0, +\infty)$, corresponding to complementary and principal $\SO(d)$-spherical unitary irreducible representations respectively. An invariant locally square-integrable random measure $\mu$ on real hyperbolic space is spectrally hyperuniform if $\sigma_{\mu}(i[0, \tfrac{d-1}{2}]) = 0$ and 
\begin{align}
\label{EqIntroSpectralHyperuniformity}
\limsup_{\varepsilon \rightarrow 0^+} \frac{\sigma_{\mu}((0, \varepsilon])}{\sigma_{\cP}((0, \varepsilon])} = 0 \, ,
\end{align}
where $d\sigma_{\cP}(\lambda) = \chi_{(0, +\infty)}(\lambda)|c(\lambda)|^{-2} d\lambda$ is the Plancherel measure and $c(\lambda)$ is the Harish-Chandra $c$-function for $\SO^{\circ}(1, d)$. In the Appendix, we show for Euclidean and real hyperbolic spaces that spectral hyperuniformity of an invariant locally square-integrable random measure is equivalent to what we call \emph{heat kernel hyperuniformity}. To state the definition we consider the bi-$K$-invariant heat kernel $h_t$ for $t > 0$, which in Euclidean spaces is given by
\begin{align*}
h_t(x) = \frac{1}{(4\pi t)^{d/2}} \e^{-\frac{\norm{x}^2}{4t}} 
\end{align*}
and in real hyperbolic spaces by
\begin{align*}
h_t(g) = \int_0^{\infty} \e^{-t(\frac{(d-1)^2}{4} + \lambda^2)} \omega_{\lambda}(g) \frac{d\lambda}{|c(\lambda)|^2} \, . 
\end{align*}
where $\omega_{\lambda} : G \rightarrow \C$ are the principal series positive-definite spherical functions for $\SO^{\circ}(1, d)$. The heat kernel $h_t$ is smooth with Gaussian decay in both geometries, see for example \cite[Theorem 3.1 p.186]{Davies1988HeatKB} for uniform estimates in the hyperbolic case, so we define heat kernel hyperuniformity in the generality of invariant random distributions as follows.

\begin{definition}
Let $\mu$ denote an invariant locally square-integrable random distribution on $X = \R^d, \bH^d$. 
\begin{enumerate}
    \item If $X = \R^d$, we say that $\mu$ is heat kernel hyperuniform if 
    \begin{align*}
        \limsup_{t \rightarrow +\infty} t^{d/2} \Varmu(\bS h_t) = 0 \, .
    \end{align*}
    \item If $X = \bH^d$, we say that $\mu$ is heat kernel hyperuniform if
    \begin{align*}
        \limsup_{t \rightarrow +\infty} t^{3/2} \e^{\frac{(d-1)^2}{2}t} \Varmu(\bS h_t) = 0 \, .
    \end{align*}
\end{enumerate}
\end{definition} 

\begin{proposition}
A random distribution $\mu$ on $X = \R^d, \bH^d$ is spectrally hyperuniform if and only if it is heat kernel hyperuniform.
\end{proposition}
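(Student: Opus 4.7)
The plan is to apply Theorem \ref{Theorem1} (in its distributional extension from Remark \ref{RemarkSpectralMeasures}) to
\[
\Varmu(\bS h_t) = \int_{\cS^+} |\hat{h}_t(\omega)|^2 d\sigma_\mu(\omega),
\]
and then to identify $\hat{h}_t$ on both geometries. In the Euclidean case, with spherical functions parameterized by $\lambda \in [0,\infty)$, one has $\hat{h}_t(\lambda) = e^{-t\lambda^2}$. On $\bH^d$, the principal series gives $\hat{h}_t(\lambda) = e^{-t((d-1)^2/4 + \lambda^2)}$, while on the complementary series $\lambda = is$ with $s \in (0,(d-1)/2]$ one has $\hat{h}_t(is) = e^{-t((d-1)^2/4 - s^2)} \geq 1$. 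Heat kernel hyperuniformity on $\R^d$ then becomes $t^{d/2} \int_0^\infty e^{-2t\lambda^2} d\sigma_\mu(\lambda) \to 0$, and on $\bH^d$ it becomes $t^{3/2} \int_0^\infty e^{-2t\lambda^2} d\sigma_\mu(\lambda) \to 0$ together with $\sigma_\mu(i[0,(d-1)/2]) = 0$; the latter vanishing is automatically forced by heat kernel hyperuniformity, since any atom of $\sigma_\mu$ at $\lambda = is_0$ on the complementary series would contribute a divergent term $t^{3/2} e^{2ts_0^2} \sigma_\mu(\{is_0\})$ to the renormalized variance.

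The direction heat kernel hyperuniformity $\Rightarrow$ spectral hyperuniformity is then immediate: since $e^{-2t\lambda^2} \geq \tfrac{1}{2}$ on $[0,(2t)^{-1/2}]$, one has $\sigma_\mu([0,(2t)^{-1/2}]) \leq 2 \Varmu(\bS h_t)$. Setting $\varepsilon = (2t)^{-1/2}$ and using the asymptotics $\sigma_\cP((0,\varepsilon]) \asymp \varepsilon^d$ on $\R^d$, and $\sigma_\cP((0,\varepsilon]) \asymp \varepsilon^3$ on $\bH^d$ (the latter from $|c(\lambda)|^{-2} \asymp \lambda^2$ near $\lambda = 0$), recovers (\ref{EqIntroSpectralHyperuniformity}) in both cases.

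For the converse, I would fix a small $\varepsilon_0 > 0$ and split
\[
t^{\alpha} \int_0^\infty e^{-2t\lambda^2} d\sigma_\mu(\lambda) = t^{\alpha} \int_0^{\varepsilon_0} e^{-2t\lambda^2} d\sigma_\mu(\lambda) + t^{\alpha} \int_{\varepsilon_0}^\infty e^{-2t\lambda^2} d\sigma_\mu(\lambda),
\]
with $\alpha = d/2$ in the Euclidean case and $\alpha = 3/2$ in the hyperbolic case. The tail is handled by factoring $e^{-2t\lambda^2} \leq e^{-3t\varepsilon_0^2/2} e^{-t\lambda^2/2}$ on $[\varepsilon_0,\infty)$ and using that $\int e^{-t\lambda^2/2} d\sigma_\mu$ is finite for any $t > 0$ (a direct consequence of $\bS h_t \in L^2(\mu)$ applied to a single reference time), making this part decay exponentially in $t$. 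For the low-frequency piece, Stieltjes integration by parts in $F(\lambda) = \sigma_\mu([0,\lambda])$ gives
\[
\int_0^{\varepsilon_0} e^{-2t\lambda^2} d\sigma_\mu(\lambda) = e^{-2t\varepsilon_0^2} F(\varepsilon_0) + \int_0^{\varepsilon_0} 4t\lambda e^{-2t\lambda^2} F(\lambda) d\lambda,
\]
and spectral hyperuniformity yields $F(\lambda) \leq \delta\,\sigma_\cP([0,\lambda])$ for $\lambda \leq \varepsilon_0$ once $\varepsilon_0$ is small enough. Rescaling $u = \lambda\sqrt{t}$ transforms $t^{\alpha}$ times the remaining integral into a quantity bounded by $\delta$ times a universal constant depending only on the near-zero exponent of $\sigma_\cP$. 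Since $\delta > 0$ is arbitrary, both limits vanish.

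The main technical obstacle is ensuring the clean matching between the renormalization factor $t^{\alpha}$ and the near-zero asymptotics of the Plancherel density; this is what dictates the specific exponents $d/2$ and $3/2$ in the definition of heat kernel hyperuniformity and makes the backward estimate sharp. The required high-frequency control on $\sigma_\mu$ is comparatively mild, as it follows automatically from the hypothesis $\bS h_t \in L^2(\mu)$; the genuine work lies in aligning the low-frequency bookkeeping of the integration by parts with the correct power of $\varepsilon$ in spectral hyperuniformity.
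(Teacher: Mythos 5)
Your argument is correct and reaches the same conclusion as the paper, but the technical execution differs in ways worth noting. The paper factors the statement through a general equivalence (Proposition \ref{PropTemperedHyperuniformity}): for \emph{any} even Schwartz window $F$ non-vanishing on $[-1,1]$ and any tempered $\sigma$, the decay $\sigma((0,\varepsilon]) = o(\varepsilon^{\alpha})$ is equivalent to $s^{\alpha}\int_0^{\infty}F(s\lambda)^2\,d\sigma(\lambda)\to 0$; this is proved via a three-piece splitting lemma (Lemma \ref{LemmaHeatKernelHyperuniformityConditions}) over $[0,r_{\varepsilon}^{1-\gamma}]$, $[r_{\varepsilon}^{1-\gamma},M]$, $[M,\infty)$, with the high-frequency tail controlled by the \emph{temperedness} of $\sigma_{\mu}$ (which the paper extracts from Paley--Wiener-type theorems). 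You specialize to the Gaussian, use a two-piece split, and control the tail instead by the a priori finiteness of $\int e^{-t_0\lambda^2/2}\,d\sigma_{\mu}$ coming from local square-integrability at a single reference time; this is a genuinely different and arguably more self-contained high-frequency estimate, at the cost of losing the general Schwartz-window statement. The low-frequency mechanism (integration by parts against $F(\lambda)=\sigma_{\mu}([0,\lambda])$, then rescaling $u=\lambda\sqrt{t}$) is the same layer-cake device as the paper's identity $\int F(s\lambda)^2\,d\sigma = s\int \psi(sr)\theta(r)\,dr$ with $\psi=-2F'F$. Two small points to tidy: the bound $e^{-2t\lambda^2}\ge \tfrac12$ on $[0,(2t)^{-1/2}]$ is false at the endpoint (the correct constant is $e^{-1}$, which works just as well); and in the hyperbolic case your exclusion of the complementary series is phrased only for atoms, but since $e^{2ts^2}\ge 1$ on all of $i[0,\tfrac{d-1}{2}]$ the whole complementary contribution is bounded below by $t^{3/2}\sigma_{\mu}^{(c)}(i[0,\tfrac{d-1}{2}])$, which forces $\sigma_{\mu}^{(c)}=0$ without any atomicity assumption --- this is also how the paper argues. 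Neither issue is a gap.
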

We prove this Proposition in the Appendix. As mentioned in Remark \ref{RemarkSpectralMeasures}, we also show that every positive Radon measure $\sigma \in \Radonplus(\cS^+)$ whose spherical transform is a positive-definite distribution on $G$ can be realized as the Bartlett spectral measure of a Gaussian random distribution on $X = G/K$. For Euclidean and real hyperbolic geometries, such measures $\sigma$ are precisely the tempered positive Radon measures on the positive-definite spherical functions. In the real hyperbolic case we are aware of essentially one spectrally hyperuniform random measure, an invariant random lattice orbit in the hyperbolic plane coming from a lattice $\Gamma < \SL_2(\R)$ constructed by Jenni in \cite[Section 1.2, Eq. B]{Jenni1984UeberDE}. For any tempered measure $\sigma$ on the positive-definite spherical functions satisfying the spectral hyperuniformity criterion in Equation \ref{EqIntroSpectralHyperuniformity}, we can construct from it a spectrally hyperuniform Gaussian random (tempered) \emph{distribution}.

\begin{corollary}
On real hyperbolic spaces $\bH^d$, there are infinitely many heat kernel hyperuniform Gaussian random distributions. 
\end{corollary}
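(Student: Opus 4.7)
The strategy is to combine three ingredients established earlier in the paper: (i) the realization theorem mentioned in Remark \ref{RemarkSpectralMeasures}, which on $\bH^d$ produces, from every tempered positive Radon measure on $\cS^+$, a Gaussian random (tempered) distribution whose Bartlett spectral measure equals the given one; (ii) the Proposition asserting the equivalence of spectral hyperuniformity and heat kernel hyperuniformity; and (iii) the uniqueness of the Bartlett spectral measure from Theorem \ref{Theorem1} (in its distributional extension), which distinguishes Gaussian distributions with distinct spectral measures. Producing infinitely many heat kernel hyperuniform examples thus reduces to exhibiting infinitely many distinct tempered positive Radon measures on $\cS^+(\SO^{\circ}(1,d),\SO(d))$ that satisfy the spectral hyperuniformity criterion in Equation \eqref{EqIntroSpectralHyperuniformity}.

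The concrete plan is as follows. I would parameterize $\cS^+$ by $i(0,\tfrac{d-1}{2}] \cup [0,+\infty)$ as in the paper and then pick, for each $n \in \N$, a non-trivial positive Radon measure $\sigma_n$ on $\cS^+$ supported inside the principal series interval $[n, n+1] \subset [0,+\infty)$; for definiteness one may take $\sigma_n = \chi_{[n,n+1]}(\lambda)\, d\lambda$. Each $\sigma_n$ has compact support and is therefore tempered, and by the paper's characterization in the real hyperbolic case, its spherical transform is a bi-$K$-invariant positive-definite tempered distribution on $G$. Applying the realization theorem from the Appendix, one obtains a Gaussian random distribution $\mu_n$ on $\bH^d$ with $\sigma_{\mu_n}=\sigma_n$.

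Verifying spectral hyperuniformity of $\mu_n$ is immediate from the construction: the complementary series condition $\sigma_n(i[0,\tfrac{d-1}{2}])=0$ holds since $\sigma_n$ is supported on the principal series parameters, and $\sigma_n((0,\varepsilon])=0$ for all $\varepsilon < n$, so
\begin{align*}
\limsup_{\varepsilon \rightarrow 0^+} \frac{\sigma_n((0,\varepsilon])}{\sigma_\cP((0,\varepsilon])} = 0
\end{align*}
holds trivially. By the Proposition, $\mu_n$ is heat kernel hyperuniform. Finally, since the measures $\sigma_n$ have mutually disjoint supports, they are pairwise distinct, and the uniqueness of the Bartlett spectral measure forces the Gaussian distributions $\mu_n$ to be pairwise distinct as well.

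There is no genuine obstacle here once the Appendix results are in hand; the only subtlety is the verification that the chosen $\sigma_n$ indeed fall within the scope of the realization theorem for $\bH^d$, namely that they are tempered and that their spherical transforms are positive-definite distributions. The first is clear because each $\sigma_n$ is compactly supported, and the second is built into the fact, recorded in the paper, that for real hyperbolic geometries the tempered positive Radon measures on $\cS^+$ are precisely those whose spherical transforms are positive-definite distributions on $G$. If one prefers genuinely different qualitative behavior rather than just translated supports, one may alternatively take a single-parameter family such as $d\sigma_t(\lambda) = \e^{-t\lambda^2} \chi_{[1,\infty)}(\lambda)\,d\lambda$ for $t>0$, which yields an uncountable family of heat kernel hyperuniform Gaussian distributions parameterized by $t$.
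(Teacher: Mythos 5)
Your proposal is correct and follows essentially the same route the paper intends: combine the Appendix realization theorem (a Gaussian random distribution with prescribed tempered Bartlett spectral measure), the equivalence of spectral and heat kernel hyperuniformity, and a supply of infinitely many distinct tempered measures on $\cS^+$ vanishing near $\lambda = 0$ and on the complementary series. Your explicit choice $\sigma_n = \chi_{[n,n+1]}(\lambda)\,d\lambda$ is compactly supported in the principal series, so continuity of the functional $b_{\sigma_n}$ and the spectral hyperuniformity criterion are both immediate, and distinctness of the resulting Gaussian laws follows from uniqueness of the spectral measure, exactly as you say.
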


\subsection{Determinantal point processes with equivariant kernels}

We set up a general framework for determinantal point processes $\mu_L$ goverened by continuous positive-definite hermitian kernels $L : X \times X \rightarrow \C$ satisfying the equivariance property
\begin{align}
\label{EqIntroEquivariantKernel}
L(g.x_1, g.x_2) = c(g, x_1)\overline{c(g, x_2)} L(x_1, x_2) \, , \quad g \in G \, , \, x_1, x_2 \in X  
\end{align}
for some measurable map $c : G \times X \rightarrow \bT^1$, where $\bT^1 \subset  \C$ denotes the unit circle. If the bi-$K$-invariant function $\kappa_L(g) = |L(x_o, g.x_o)|^2$ is square-integrable on $G$, then we compute the Bartlett spectral measure of $\mu_L$ from Theorem \ref{Theorem1} to be 
\begin{align*}
d\sigma_{\mu_L}(\omega) = (L(x_o, x_o) - \hat{\kappa}_L(\omega)) d\sigma_{\cP}(\omega)  
\end{align*}
where $\sigma_{\cP}$ denotes the Plancherel measure for $(G, K)$. We provide explicit formulas for the Bartlett spectral measure for some well-studied invariant determinantal point processes.

For \emph{infinite polyanalytic ensembles of pure type} in $\C$, that is, determinantal point processes $\mu_{\lambda, n}$ on $\C$ with kernel of the form
\begin{align*}
L_{\lambda, n}(z_1, z_2) =  \cL_n(\tfrac{1}{2} |\lambda| |z_1 - z_2|^2) \e^{-\frac{1}{4}|\lambda| (|z_1|^2 + |z_2|^2 - 2 z_1\overline{z}_2 )} \, , \,\, \lambda \in \R \backslash \{0\} , n \in \Z_{\geq 0} \, , 
\end{align*}
where $\cL_n$ denotes the $n$'th Laguerre polynomial, the Bartlett spectral measure is given by
\begin{align*}
d\sigma_{\mu_{\lambda, n}}(\zeta) = \Big(1 - \frac{2\pi}{|\lambda|} \cL_n(\tfrac{2\pi^2}{|\lambda|} |\zeta|^2)^2 \e^{-\frac{2\pi^2}{|\lambda|} |\zeta|^2}\Big) dA(\zeta) \, , \quad \zeta \in \C \, . 
\end{align*}
Here, $A$ denotes the Lebesgue measure on $\C$ and we note that $\mu_{2\pi, 0}$ is the well-known \emph{infinite Ginibre ensemble}.   From the Bartlett spectral measure one sees that $\mu_{\pm 2\pi, n}$ are the (spectrally) hyperuniform point processes in this family. 

\noindent The processes $\mu_{\lambda, n}$ are examples of \emph{Weyl-Heisenberg ensembles} introduced in \cite{AbreuPereiraRomeroTorquato}, and the kernels $L_{\lambda, n}$ can be defined in terms of unitary irreducible representations of the Heisenberg motion group $\U(d) \ltimes \bH_d$, which is a covering group of the unitary motion group $\U(d) \ltimes \C^d$. We detail a general approach for  determinantal point processes of Weyl-Heisenberg type on commutative spaces $X = G/K$ with kernels coming from unitary representations of a covering group $\tilde{G}$ of $G$. In light of this, we observe that hyperuniformity of the processes $\mu_{\lambda, n}$ on $\C^d$ corresponds to the governing representation of the Heisenberg motion group having \emph{formal dimension $1$}, meaning
\begin{align*}
\int_{\tilde{G}/Z} |\langle \pi_{\lambda, n}(\tilde{g}) v_{\lambda, n}, v_{\lambda, n} \rangle|^2 dm_{\tilde{G}/Z}(\tilde{g}Z) = 1
\end{align*}
where $Z < \tilde{G}$ is the center and $\pi_{\lambda, n}$ is the irreducible unitary representation of the Heisenberg motion group with $\U(d)$-invariant cyclic unit vector $v_{\lambda, n}$ defining the kernel $L_{\lambda, n}$.

We also compute the Bartlett spectral measure for the invariant zero set of the hyperbolic Gaussian analytic function 
\begin{align*}
F_1(z) = \sum_{n = 0}^{\infty} a_n z^n \,  , \quad |z| < 1 \, , 
\end{align*}
where $a_n$ are again complex standard Gaussian random variables. Peres and Vir\'ag in \cite{PeresVirag} famously proved that the random zero set of $F_1$ is equivalent as a point process to the determinantal point process in $\bH^2$ defined by the \emph{Bergman kernel}, which on the Euclidean unit disk is given by $L_o(z_1, z_2) = \pi^{-1} (1 - z_1 \overline{z}_2)^{-2}$. The modified kernel
\begin{align*}
L(z_1, z_2) = \frac{(1 - |z_1|^2)(1 - |z_2|^2)}{(1 - z_1 \overline{z_2})^2}
\end{align*}
defines the same determinantal point process, which we denote $\mu_L$, and satisfies the equivariance property in Equation \ref{EqIntroEquivariantKernel}. Using a formula for the Berezin transform of spherical functions due to Unterberger-Upmeier in \cite[Prop. 3.39, p.591]{UnterbergerUpmeier}, we calculate the Bartlett spectral measure of the determinantal point process $\mu_L$ to be
\begin{align*}
d\sigma_L(\lambda) &= (1 - |\Gamma(\tfrac{3}{2} + i\lambda)|^2) d\sigma_{\cP}(\lambda) = \chi_{(0, +\infty)}(\lambda) (1 - |\Gamma(\tfrac{3}{2} + i\lambda)|^2) \pi\lambda \tanh(\tfrac{\pi\lambda}{2}) d\lambda \, . 
\end{align*}
In particular, the determinantal point process $\mu_L$ is not heat kernel/spectrally hyperuniform since 
\begin{align*}
\limsup_{\varepsilon \rightarrow 0^+} \frac{\sigma_{\mu}((0, \varepsilon])}{\sigma_{\cP}((0, \varepsilon])} = 1 - \Gamma(\tfrac{3}{2})^2 > 0 \, . 
\end{align*}

\subsection{Structure of the paper} 

We recall some fundamentals of spherical harmonic analysis for Gelfand pairs in Section \ref{Spherical harmonic analysis on Gelfand pairs}, introducing the spherical transform, the spherical Bochner Theorem and spectral projections on the $K$-spherical unitary dual. In Section \ref{Spectral measures} we use this to prove the existence and uniqueness of spectral measures in the general setting, which includes Theorem \ref{Theorem1} as a special case, as well as determine the mass of the atom at the trivial spherical function for such spectral measures. The relevant commutative spaces and random measures on such spaces are introduced in Section \ref{Random measures on commutative spaces}, and in Section \ref{Upper bounds for number variances with spectral gap} we prove Theorem \ref{Theorem2}. Invariant determinantal point processes are treated in Section \ref{Determinantal point processes}, and heat kernel hyperunifromity and Gaussian random distributions are discussed in the Appendix.

\subsection{Acknowledgement} 

This paper is part of the second author’s doctoral thesis at the
University of Gothenburg under the supervision of the first author. The first author was supported by the grant 11253320 from the Swedish Research Council. We sincerely thank Genkai Zhang and Tobias Hartnick for their valuable insights. The second author is grateful to Tobias Hartnick for useful input and discussions regarding the project as well as hospitality during visits to the Karlsruhe Institute of Technology.

%\newpage

\begin{spacing}{0.1}
\tableofcontents
\end{spacing}

\section{Spherical harmonic analysis on Gelfand pairs}
\label{Spherical harmonic analysis on Gelfand pairs}

We introduce notation for function spaces and basic definitions and conventions for groups and Gelfand pairs in the first Subsections, before defining spherical functions and the spherical transform in Subsections \ref{Positive-definite spherical functions} and \ref{The spherical transform}. We then state the spherical analogue of Bochner's Theorem for Gelfand pairs and introduce spectral projections in Subsections \ref{Spherical Bochner's Theorem} and \ref{Linfty action on the spherical unitary dual}, both of which being crucial results for the following Section.

\subsection{Function spaces}
\label{Function spaces}

Given a locally compact second countable Hausdorff (lcsc) space $Y$ equipped with its Borel $\sigma$-algebra, we let $\Radonplus(Y)$ denote the space of positive Radon measures on $Y$. We will write $\sL(Y)$ for the complex-valued measurable functions on $Y$ and let $\Borelinfty(Y), \Borelbndinfty(Y) \subset \sL(Y)$ be the subspaces of bounded measurable functions and bounded measurable functions with compact support respectively. Given a measure $\nu \in \Radonplus(Y)$ and $p \in [1, +\infty)$, we write 
\begin{align*}
\sL^p(Y, \nu) = \Big\{ f \in \sL(Y) : \int_Y |f(y)|^p d\nu(y) < +\infty \Big\} \, . 
\end{align*}
The corresponding Banach space of such functions modulo $\nu$-null equivalence is $L^p(Y, \nu)$. We let $C(Y) \subset \sL(Y)$ be the space of continuous functions on $Y$, $C_b(Y) = C(Y) \cap \Borelinfty(Y)$ the bounded continuous functions, $C_0(Y) \subset C_b(Y)$ the closed subspace of functions vanishing at infinity and $C_c(Y) = C(Y) \cap \Borelbndinfty(Y)$ the compactly supported continuous functions.

If $Y$ has in addition a smooth structure, then we write $C^{\infty}(Y)$ for the smooth functions on $Y$ and $\Ccinfty(Y) = C^{\infty}(Y) \cap \Borelbndinfty(Y)$ for the subspace of compactly supported smooth functions.

Regarding actions, if $K$ is a group acting on $Y$ from the left and $\sA(Y)$ is a function space on $Y$ we write $\sA(Y)^K$ for the subspace of left-$K$-invariant functions in $\sA(Y)$. For a group $G$ with subgroup $K < G$ and a function space $\sA(G)$ on $G$, we will write $\sA(G, K)$ for the space of \emph{bi-$K$-averages} of functions in $\sA(G)$ unless told otherwise, see Equation \ref{EqBiKAveraging} for the definition of bi-$K$-averaging.

\subsection{Preliminaries on Gelfand pairs}
\label{preliminaries on Gelfand pairs}

Let $G$ denote a unimodular, lcsc group with identity element $e \in G$ and fix a Haar measure $m_G \in \Radonplus(G)$ on $G$. If $\varphi_1, \varphi_2 \in \sL^1(G)$ then their \emph{convolution} is the function
\begin{align*}
(\varphi_1 * \varphi_2)(g) = \int_G \varphi_1(h) \varphi_2(h^{-1}g) dm_G(h) \, .
\end{align*}
Alternatively, a change of variables allows one to write
\begin{align*}
(\varphi_1 * \varphi_2)(g) = \int_G \varphi_1(gh^{-1}) \varphi_2(h) dm_G(h) \, .
\end{align*}
Moreover, we will consider the involutions $\overline{\varphi}(g) = \overline{\varphi(g)}, \check{\varphi}(g) = \varphi(g^{-1})$ and the $*$-involution $\varphi^*(g) = \overline{\varphi(g^{-1})}$ for measurable functions $\varphi \in \sL(G)$, in particular $\varphi \in \sL^1(G)$. Then $\sL^1(G)$ inherits a $*$-algebra structure under the operations of convolution and $*$-involution, which carries over to the quotient space $L^1(G)$, making it a Banach $*$-algebra. The left-regular action of $G$ on measurable functions $\varphi \in \sL(G)$ will be denoted by
\begin{align*}
\lambda_G(g)\varphi(h) = \varphi(g^{-1}h)
\end{align*}
and the associated action of $\sL^1(G)$ on bounded measurable functions $\varphi \in \Borelinfty(G)$ is 
\begin{align*}
\lambda_G(\beta)\varphi(g) = \int_G \beta(h) \lambda_G(h)\varphi(g) dm_G(h) = (\beta * \varphi)(g) \, , \quad \beta \in \sL^1(G) \, . 
\end{align*}
Using this action one shows that the left-regular group action commutes with convolution,
\begin{align*}
\lambda_G(g)(\varphi_1 * \varphi_2) = (\lambda_G(g)\varphi_1) * \varphi_2
\end{align*}
for all $\varphi_1, \varphi_2 \in \sL^1(G)$ and all $g \in G$.

Given a compact subgroup $K < G$, the \emph{Hecke algebra} of the pair $(G, K)$ is the algebra $C_c(G, K)$ of bi-$K$-invariant compactly supported continuous complex-valued functions on $G$, endowed with the operation of convolution and the $*$-involution as above. 

\begin{definition}[Gelfand pair]
\label{DefGelfandPair}
Let $G$ be a lcsc group and $K < G$ a compact subgroup. The pair $(G, K)$ is \emph{Gelfand} if the Hecke algebra $(C_c(G, K), *)$ is commutative.
\end{definition}

We remark that the Gelfand property is equivalent to convolution being commutative on the space $\sL^1(G, K)$ of measurable bi-$K$-invariant integrable functions. Gelfand pairs include abelian groups $G$ with $K$ trivial, connected semisimple Lie groups $G$ with finite center and maximal compact subgroup $K < G$, automorphism groups of biregular trees with the corresponding root stabilizer as well as Euclidean and Heisenberg motion groups with the corresponding maximal compact subgroup of orthogonal/unitary transformations.

\subsection{Positive-definite spherical functions}
\label{Positive-definite spherical functions}

Let $(G, K)$ be a lcsc Gelfand pair with $K < G$ compact. A \emph{$K$-spherical function}, or \emph{spherical function} for short, for the pair $(G, K)$ is a bi-$K$-invariant continuous function $\omega \in C(G, K)$ such that $\omega(e) = 1$ satisfying the functional equation
\begin{align}
\label{EqSphericalFunctionFunctionalEquation}
\int_K \omega(gkh) dm_K(k) = \omega(g)\omega(h) \, . 
\end{align}
We denote the space of spherical functions for $(G, K)$ by $\cS = \cS(G, K)$, endowed with the weak$^*$-topology from $C_c(G)^*$. Special attention will be paid to the \emph{positive-definite} spherical functions. A continuous function $\beta \in C(G)$ is positive-definite if, for every $\varphi \in \Borelbndinfty(G)$,
\begin{align*}
\int_G (\varphi^* * \varphi)(g) \beta(g) dm_G(g) \geq 0 \, . 
\end{align*}
Given a positive-definite function $\beta \in C(G)$, the classical GNS-construction yields a unitary $G$-representation $(\pi_{\beta}, \cH_{\beta})$ and a non-zero cyclic vector $v_{\beta} \in \cH_{\beta}$ such that $\beta(g) = \langle \pi_{\beta}(g) v_{\beta}, v_{\beta} \rangle_{\cH_{\beta}}$. Moreover, the triple $(\pi_{\beta}, \cH_{\beta}, v_{\beta})$ is unique up to unitary equivalence. In particular, such $\beta$ are uniformly continuous and bounded with $\norm{\beta}_{\infty} = \beta(e)$. We denote by $\cS^+ \subset \cS$ the locally compact Hausdorff space of positive-definite spherical functions for $(G, K)$. A spherical function $\omega \in \cS$ is positive-definite if and only if it is the matrix coefficient of a unitary $K$-spherical irreducible unitary $G$-representation $(\pi_{\omega}, \cH_{\omega})$. Here, $K$-spherical means that the subspace $\cH_{\omega}^K$ of $K$-invariant vectors is non-trivial, and by irreducibility we must have $\cH_{\omega}^K = \C v_{\omega}$. In particular, the unit vector $v_{\omega} \in \cH_{\omega}^K$ is uniquely defined up to multiplication by a unit norm complex number. In this sense, $\cS^+$ can be identified with the \emph{$K$-spherical unitary dual} $\hat{G}^K$ of equivalence classes of $K$-spherical irreducible unitary $G$-representations. 

\subsection{The spherical transform}
\label{The spherical transform}

The positive-definite spherical functions introduced for Gelfand pairs $(G, K)$ are in a sense a generalization of unitary characters on abelian groups, and they allow us to define an analogue of the Fourier transform for functions for the bi-$K$-invariant on $G$.

\begin{definition}[Spherical transform]
\label{DefSphericalTransform}
Let $\varphi \in \sL^1(G, K)$. The \emph{spherical transform} of $\varphi$ is the function $\hat{\varphi} \in C_0(\cS^+)$ given by
\begin{align*}
\hat{\varphi}(\omega) = \int_G \varphi(g) \omega(g^{-1}) dm_G(g) \, . 
\end{align*}
\end{definition}

One verifies that the spherical transform satisfies $\hat{\varphi^*}(\omega) = \overline{\hat{\varphi}(\omega)}$ for all $\omega \in \cS^+$ and when $\varphi, \psi \in \sL^1(G)$ are \emph{bi-$K$-invariant}, the functional equation in Equation \ref{EqSphericalFunctionFunctionalEquation} can be used to show that
\begin{align}
\label{EqSphericalTransformConvolutionToProductFormula}
\hat{(\psi^* * \varphi)}(\omega) = \hat{\varphi}(\omega) \overline{\hat{\psi}(\omega)} 
\end{align}
for all $\omega \in \cS^+$. We also note that the image $\{ \hat{\varphi} \in C_0(\cS^+) : \varphi \in \sL^1(G, K) \}$ can be shown to be dense in $C_0(\cS^+)$ using the Stone-Weierstrass Theorem, since it separates points and vanishes nowhere.

Before moving on we introduce the bi-$K$-averaging map $\sL^1(G) \rightarrow \sL^1(G, K)$, $\varphi \mapsto \varphi^{\natural}$ by
\begin{align}
\label{EqBiKAveraging}
\varphi^{\natural}(g) = \int_{K \times K} \varphi(k_1 g k_2) dm_K^{\otimes 2}(k_1, k_2) \, , 
\end{align}
which allows us to naturally extend the spherical transform to $\sL^1(G)$ as $\varphi \mapsto \hat{\varphi}^{\natural}$. We note however that the $*$-homomorphism property in Equation \ref{EqSphericalTransformConvolutionToProductFormula} is in general \emph{not} satisfied for functions $\varphi, \psi \in \sL^1(G)$.

\subsection{Spherical Bochner's Theorem}
\label{Spherical Bochner's Theorem}

The classical Bochner Theorem states that a continuous function on $\R^d$ is positive-definite if and only if it is the Fourier transform of a positive finite Borel measure on $\R^d$. The analogue of this statement for Gelfand pairs is the following, and a proof can be found in \cite[Theorem 2.5, p.58]{BarkersThesis}.
\begin{theorem}[Godement-Bochner]
\label{TheoremBochner}
Let $\beta \in C(G)$ be a bi-$K$-invariant positive-definite function. Then there is a unique finite positive Borel measure $\sigma_{\beta}$ on $\cS^+$ such that
\begin{align*}
\beta(g) = \int_{\cS^+} \omega(g) d\sigma_{\beta}(\omega)  \, . 
\end{align*}
\end{theorem}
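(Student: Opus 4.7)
The plan is to deduce the theorem from an abstract Bochner--Raikov representation theorem for positive linear functionals on commutative Banach $*$-algebras, applied to $L^1(G,K)$.

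First, I would associate to the bi-$K$-invariant positive-definite function $\beta$ the linear functional $\Lambda_{\beta} : L^1(G,K) \to \C$ defined by
\[
\Lambda_{\beta}(\varphi) = \int_G \varphi(g)\, \beta(g)\, dm_G(g).
\]
Since $\beta$ is bounded with $\norm{\beta}_{\infty} = \beta(e)$, the functional $\Lambda_{\beta}$ is continuous on $L^1(G,K)$, and the defining inequality of positive-definiteness (extended from $\Borelbndinfty(G)$ to $L^1(G)$ by density) gives directly that $\Lambda_{\beta}(\varphi^* * \varphi) \geq 0$, while the relation $\beta(g^{-1}) = \overline{\beta(g)}$ produces the Hermitian property $\Lambda_{\beta}(\varphi^*) = \overline{\Lambda_{\beta}(\varphi)}$. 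Because $(G,K)$ is a Gelfand pair, $L^1(G,K)$ is a commutative Banach $*$-algebra, placing us in exactly the setting of the abstract Bochner--Raikov theorem.

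Applying that theorem produces a unique positive Radon measure on the Hermitian part of the Gelfand spectrum of $L^1(G,K)$ representing $\Lambda_\beta$. The crucial identification is a canonical homeomorphism between $\cS^+$ and this Hermitian spectrum, via $\omega \mapsto \chi_\omega$ with $\chi_\omega(\varphi) = \hat\varphi(\omega)$: nonzero characters of $L^1(G,K)$ are parametrized by the bounded spherical functions via the spherical transform, using the functional equation \eqref{EqSphericalFunctionFunctionalEquation}, and the $*$-preservation condition isolates the positive-definite ones. Transporting the measure to $\cS^+$ yields $\sigma \in \Radonplus(\cS^+)$ with
\[
\int_G \varphi(g)\, \beta(g)\, dm_G(g) = \int_{\cS^+} \hat\varphi(\omega)\, d\sigma(\omega), \qquad \varphi \in L^1(G,K).
\]

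Finally, Fubini (justified since $|\omega(g)| \leq 1$ for $\omega \in \cS^+$ and $\sigma(\cS^+) = \beta(e) < \infty$) rewrites the right-hand side as $\int_G \varphi(g)\, \tilde\beta(g)\, dm_G(g)$, where $\tilde\beta(g) := \int_{\cS^+} \omega(g^{-1})\, d\sigma(\omega)$ is continuous and bi-$K$-invariant. Applying this equality to a sequence of nonnegative bi-$K$-invariant test functions concentrating near any prescribed $g \in G$ forces the pointwise equality $\beta = \tilde\beta$; using $\omega(g^{-1}) = \overline{\omega(g)}$ for $\omega \in \cS^+$, this reads $\beta(g) = \int_{\cS^+} \overline{\omega(g)}\, d\sigma(\omega)$, so defining $\sigma_\beta$ as the push-forward of $\sigma$ under the involution $\omega \mapsto \overline{\omega}$ on $\cS^+$ yields the claimed representation. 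Uniqueness of $\sigma_\beta$ then follows from the density of $\{\hat\varphi : \varphi \in L^1(G,K)\}$ in $C_0(\cS^+)$ already noted earlier in the excerpt. The main obstacle is the spectral identification itself: one must show that every $*$-preserving character of $L^1(G,K)$ is obtained as the spherical transform evaluated at some positive-definite spherical function, which amounts to extracting a continuous bi-$K$-invariant solution of the spherical functional equation from an abstract algebra character, and it is here that the Gelfand pair hypothesis does the essential work.
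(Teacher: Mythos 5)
The paper does not prove this statement itself; it cites \cite[Theorem 2.5, p.58]{BarkersThesis}, so your proposal can only be measured against the standard argument. Your overall strategy (pass to the commutative Banach $*$-algebra $L^1(G,K)$, apply an abstract representation theorem for positive functionals, identify the relevant characters with spherical functions) is indeed the classical route, and the reduction of $\beta$ to the positive functional $\Lambda_\beta$, the final Fubini/approximate-identity step, and the uniqueness via Stone--Weierstrass are all fine. But there is a genuine gap at exactly the point you flag as ``the main obstacle'': the claim that the $*$-preservation condition on characters isolates the \emph{positive-definite} spherical functions is false in general. A character $\chi_\omega$ of $L^1(G,K)$ is Hermitian precisely when the bounded spherical function $\omega$ satisfies $\omega(g^{-1}) = \overline{\omega(g)}$, and for any such $\omega$ one has $\chi_\omega(\varphi^* * \varphi) = |\hat{\varphi}(\omega)|^2 \geq 0$, so $\chi_\omega$ is itself a positive functional on the commutative algebra \emph{regardless} of whether $\omega$ is positive-definite on $G$. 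Consequently the Hermitian spectrum can be strictly larger than $\cS^+$ (this is exactly the gap between the Hermitian and the unitary spherical duals, nontrivial already for higher-rank groups), and the abstract Bochner--Raikov theorem, which only sees the positivity of $\Lambda_\beta$ on the subalgebra, produces a measure that a priori charges bounded Hermitian spherical functions outside $\cS^+$. Your argument never uses the positive-definiteness of $\beta$ as a function on all of $G$ beyond deriving $\Lambda_\beta(\varphi^* * \varphi)\geq 0$, and that weaker hypothesis genuinely cannot yield support in $\cS^+$.

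The missing ingredient is to run the GNS construction at the level of the group: positive-definiteness of $\beta$ on $G$ gives a unitary representation $(\pi_\beta, \cH_\beta, v_\beta)$ with $\beta(g) = \langle \pi_\beta(g)v_\beta, v_\beta\rangle$, and bi-$K$-invariance forces $v_\beta \in \cH_\beta^K$ (since $\langle \pi_\beta(k)v_\beta, v_\beta\rangle = \beta(e) = \norm{v_\beta}^2$). One then applies the spectral theorem to the commutative $C^*$-algebra generated by $\pi_\beta(C_c(G,K))$ acting on the cyclic subspace of $\cH_\beta^K$; every character $\omega$ in the support of the resulting spectral measure satisfies $|\hat{\varphi}(\omega)| \leq \norm{\pi_\beta(\varphi)}_{\mathrm{op}} \leq \norm{\varphi}_{C^*(G)}$ for all $\varphi$, and this $C^*$-norm bound (rather than Hermitianness) is what characterizes membership in $\cS^+$. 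With that substitution the rest of your argument goes through.
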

\begin{remark}
This result in \cite[Theorem 2.5, p.58]{BarkersThesis} is stated for Lie groups, but we emphasize that the proof works verbatim for a general lcsc Gelfand pair $(G, K)$ with $K < G$ compact.
\end{remark}
Writing this result in terms of $K$-spherical matrix coefficients, we can using polarization also define complex-valued spectral measures for the non-diagonal matrix coefficients, see \cite[Prop. 2.53, p. 111]{EinsiedlerWardUnitaryBook} for the abelian case.
\begin{corollary}[Spectral measures for spherical matrix coefficients]
\label{CorollarySpectralMeasuresForMatrixCoefficients}
Let $(\pi, \cH)$ be a $K$-spherical unitary $G$-representation. Then for every $v, w \in \cH^K$ there is a unique complex-valued Borel measure $\sigma_{v, w}$ on $\cS^+$ with total variation $|\sigma_{v, w}|(\cS^+) \leq \norm{v}\norm{w}$ such that
\begin{align*}
\langle \pi(g)v, w \rangle_{\cH} = \int_{\cS^+} \omega(g) d\sigma_{v, w}(\omega)
\end{align*}
for all $g \in G$. When $v = w$, the measure $\sigma_v := \sigma_{v, v}$ is in addition a positive measure.
\end{corollary}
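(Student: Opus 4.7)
The plan is to reduce the general case to the diagonal case via polarization and use the spectral theorem for the Hecke algebra action to obtain the total variation bound.

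First, for the diagonal case $v = w$, I would observe that the matrix coefficient $\beta_v(g) = \langle \pi(g) v, v \rangle_{\cH}$ is a continuous positive-definite function on $G$ (by the standard computation $\int_G (\varphi^* * \varphi)(g) \beta_v(g)\, dm_G(g) = \norm{\pi(\varphi) v}^2 \geq 0$), and since $v \in \cH^K$, bi-$K$-invariance of $\beta_v$ is immediate. Theorem \ref{TheoremBochner} then produces the unique finite positive Borel measure $\sigma_v := \sigma_{\beta_v} \in \Radonplus(\cS^+)$ with $\sigma_v(\cS^+) = \beta_v(e) = \norm{v}^2$.

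For general $v, w \in \cH^K$ I would invoke the polarization identity
\begin{align*}
\langle \pi(g) v, w \rangle_{\cH} = \tfrac{1}{4} \sum_{k=0}^{3} i^{-k} \langle \pi(g)(v + i^k w), v + i^k w \rangle_{\cH}
\end{align*}
and set $\sigma_{v,w} := \tfrac{1}{4} \sum_{k=0}^{3} i^{-k} \sigma_{v + i^k w}$, which by linearity of integration satisfies the required integral formula. Uniqueness of $\sigma_{v,w}$ as a complex Borel measure then follows from the fact noted at the end of Subsection \ref{The spherical transform}: the spherical transforms $\{ \hat{\varphi} : \varphi \in \sL^1(G, K) \}$ are dense in $C_0(\cS^+)$ by Stone-Weierstrass, and for any candidate $\tau$ one has $\int_{\cS^+} \hat{\varphi}(\omega) d\tau(\omega) = \int_G \varphi(g) \langle \pi(g^{-1}) v, w\rangle dm_G(g)$, a quantity depending only on $v, w, \varphi$. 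The Riesz representation theorem for complex measures then forces equality.

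The sharp total variation bound $|\sigma_{v,w}|(\cS^+) \leq \norm{v}\norm{w}$ is the main technical point, and my proposed route is through the spectral theorem. For bi-$K$-invariant $\varphi \in C_c(G)$, the operator $\pi(\varphi)$ preserves $\cH^K$, and since $(G, K)$ is Gelfand, the family $\{ \pi(\varphi) : \varphi \in C_c(G, K) \}$ consists of commuting operators on $\cH^K$. Its norm-closure $\cA \subset B(\cH^K)$ is a commutative $C^*$-algebra whose Gelfand spectrum embeds into $\cS^+$. The associated projection-valued measure $E$ on $\cS^+$ then satisfies $\pi(\varphi)|_{\cH^K} = \int_{\cS^+} \hat{\varphi}(\omega) dE(\omega)$, and by the uniqueness established above one identifies $\sigma_{v,w}(A) = \langle E(A) v, w \rangle_{\cH}$. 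For any measurable partition $\{A_i\}$ of $\cS^+$, orthogonality of the projections $E(A_i)$ combined with Cauchy-Schwarz twice yields
\begin{align*}
\sum_i |\sigma_{v,w}(A_i)| \leq \sum_i \norm{E(A_i) v} \norm{E(A_i) w} \leq \Big(\sum_i \norm{E(A_i)v}^2\Big)^{1/2} \Big(\sum_i \norm{E(A_i)w}^2\Big)^{1/2} = \norm{v}\norm{w},
\end{align*}
giving the bound. The hardest step will be cleanly identifying the Gelfand spectrum of $\cA$ with a subset of $\cS^+$ so that the projection-valued measure is genuinely supported on positive-definite spherical functions; once this identification is in place, the rest of the argument is a standard application of the spectral theorem.
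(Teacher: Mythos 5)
Your proposal is correct and follows essentially the same route the paper intends: the paper derives this corollary from the Godement--Bochner theorem (Theorem \ref{TheoremBochner}) by polarization, citing the abelian analogue in \cite[Prop. 2.53]{EinsiedlerWardUnitaryBook}, exactly as you do for existence and uniqueness. Your additional spectral-theorem argument for the bound $|\sigma_{v,w}|(\cS^+) \leq \norm{v}\norm{w}$ is genuinely needed (polarization alone only yields $\norm{v}^2 + \norm{w}^2$, and even optimal rescaling only $2\norm{v}\norm{w}$), and it is the standard way this bound is obtained in the cited reference; the identification of the Gelfand spectrum of the closure of $\pi(C_c(G,K))|_{\cH^K}$ with a subset of $\cS^+$ that you flag as the delicate point is indeed standard Gelfand theory for the commutative Banach $*$-algebra $L^1(G,K)$.
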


\subsection{$\sL^{\infty}$-action on the spherical unitary dual}
\label{Linfty action on the spherical unitary dual}

If $(\pi, \cH)$ is a unitary $G$-representation, it can be lifted to a bounded Banach $*$-algebra representation $\pi : L^1(G) \rightarrow \sB(\cH)$, uniquely defined by
\begin{align*}
\langle \pi(\beta)v, w \rangle_{\cH} = \int_G \beta(g) \langle \pi(g)v, w \rangle_{\cH} dm_G(g)
\end{align*}
for all $g \in G$ and all $v, w \in \cH$. If $\beta \in L^1(G, K)$,  $(\pi, \cH)$ is $K$-spherical and $v, w \in \cH^K$, Corollary \ref{CorollarySpectralMeasuresForMatrixCoefficients} allows us to write this matrix coefficient as
\begin{align}
\label{EqSphericalFormulaForTheL^1(G)MatrixCoefficients}
\langle \pi(\beta)v, w \rangle_{\cH} = \int_{\cS^+} \hat{\beta}(\omega) d\sigma_{v, w}(\omega) \, . 
\end{align}
We will now extend this family of operators associated to functions in $L^1(G, K)$ to functions in $\sL^{\infty}(\cS^+)$, following the same argument as in \cite[Prop. 2.58, p. 115]{EinsiedlerWardUnitaryBook}.
\begin{lemma}
\label{LemmaSphericalLinftyRepresentation}
Let $(\pi, \cH)$ be a $K$-spherical unitary $G$-representation. Then the map $\hat{\pi} : \sL^{\infty}(\cS^+) \rightarrow \sB(\cH^K)$ given implicitly by
\begin{align*}
\langle \hat{\pi}(\phi)v, w \rangle_{\cH} = \int_{\cS^+} \phi(\omega) d\sigma_{v, w}(\omega)
\end{align*}
for all $v, w \in \cH^K$ is a well-defined $*$-algebra homomorphism. 
\end{lemma}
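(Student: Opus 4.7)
The plan is to establish, in order: (i) well-definedness and boundedness of $\hat\pi(\phi)$, (ii) the involutive identity $\hat\pi(\overline\phi) = \hat\pi(\phi)^*$, and (iii) multiplicativity $\hat\pi(\phi_1 \phi_2) = \hat\pi(\phi_1)\hat\pi(\phi_2)$, which I expect to be the principal obstacle.

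For (i), I would read the defining relation as a sesquilinear form on $\cH^K \times \cH^K$. The total-variation bound $|\sigma_{v,w}|(\cS^+) \leq \norm{v}\norm{w}$ from Corollary \ref{CorollarySpectralMeasuresForMatrixCoefficients} yields
\[
\Big| \int_{\cS^+} \phi(\omega)\, d\sigma_{v,w}(\omega) \Big| \leq \norm{\phi}_{\infty}\norm{v}\norm{w},
\]
so the form is bounded and the Riesz representation theorem produces a unique operator $\hat\pi(\phi) \in \sB(\cH^K)$ with $\norm{\hat\pi(\phi)} \leq \norm{\phi}_\infty$; linearity of $\phi \mapsto \hat\pi(\phi)$ is immediate from linearity of the integral.

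For (ii), the key input is the symmetry $\sigma_{w,v} = \overline{\sigma_{v,w}}$. Since every $\omega \in \cS^+$ is a diagonal matrix coefficient of a unitary representation on a $K$-fixed unit vector, one has $\omega(g^{-1}) = \overline{\omega(g)}$; conjugating the defining identity of Corollary \ref{CorollarySpectralMeasuresForMatrixCoefficients} and invoking the uniqueness clause forces $\sigma_{w,v} = \overline{\sigma_{v,w}}$. A direct computation then gives
\[
\langle \hat\pi(\phi)^* w, v\rangle = \overline{\langle \hat\pi(\phi)v, w\rangle} = \int_{\cS^+} \overline{\phi} \, d\sigma_{w,v} = \langle \hat\pi(\overline\phi) w, v\rangle.
\]

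For (iii), my strategy is to prove first the \emph{measure-transport} identity $d\sigma_{\pi(\beta)v, w} = \hat\beta \cdot d\sigma_{v, w}$ for every $\beta \in L^1(G, K)$ and $v, w \in \cH^K$; note that $\pi(\beta)v$ lies in $\cH^K$ because $\beta$ is bi-$K$-invariant. Testing this identity against $\hat\beta'$ for $\beta' \in L^1(G, K)$ reduces it, via Equation \ref{EqSphericalFormulaForTheL^1(G)MatrixCoefficients} applied to $\langle \pi(\beta' \ast \beta) v, w\rangle$, to the multiplicativity $\widehat{\beta' \ast \beta} = \hat\beta' \hat\beta$ on the Hecke algebra, which in turn follows from Equation \ref{EqSphericalTransformConvolutionToProductFormula} together with the relation $\hat{\beta^*} = \overline{\hat\beta}$. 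Since the image of $L^1(G, K)$ under the spherical transform is dense in $C_0(\cS^+)$ by Stone-Weierstrass as recalled in Subsection \ref{The spherical transform}, I propagate the identity to $\phi \in C_0(\cS^+)$ by $\norm{\cdot}_\infty$-continuity of $\hat\pi$ and then to all $\phi \in \sL^\infty(\cS^+)$ by bounded convergence applied to the finite measures $\sigma_{v,w}$, obtaining $d\sigma_{\hat\pi(\phi)v, w} = \phi \, d\sigma_{v, w}$. Multiplicativity then follows at once: for $\phi_1, \phi_2 \in \sL^\infty(\cS^+)$ and $v, w \in \cH^K$,
\[
\langle \hat\pi(\phi_1)\hat\pi(\phi_2) v, w \rangle = \int_{\cS^+} \phi_1 \, d\sigma_{\hat\pi(\phi_2)v, w} = \int_{\cS^+} \phi_1 \phi_2 \, d\sigma_{v, w} = \langle \hat\pi(\phi_1 \phi_2) v, w\rangle.
\]
The main technical obstacle is the transport identity itself, as it simultaneously relies on the Banach $*$-algebra structure of $L^1(G)$, the multiplicativity of the spherical transform on bi-$K$-invariant functions, and a two-step density-plus-dominated-convergence extension from $L^1(G, K)^{\wedge}$ through $C_0(\cS^+)$ to $\sL^\infty(\cS^+)$.
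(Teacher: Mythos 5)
Your proof is correct, and while it lands in the same place as the paper's, it is organized quite differently. The paper defines $\hat{\pi}(\phi)$ for general $\phi \in \sL^{\infty}(\cS^+)$ as a weak operator limit of $\pi(\beta_n)$ along a sequence with $\hat{\beta}_n \rightarrow \phi$ pointwise, and obtains the $*$-homomorphism property by pushing the Banach $*$-algebra identity $\pi(\beta_1)\pi(\beta_2)^* = \pi(\beta_1 * \beta_2^*)$ through that same limit. You instead get well-definedness and the bound $\norm{\hat{\pi}(\phi)} \leq \norm{\phi}_{\infty}$ directly from the total-variation estimate of Corollary \ref{CorollarySpectralMeasuresForMatrixCoefficients} and the Riesz representation of bounded sesquilinear forms, with no approximation at that stage, and you isolate the measure-transport identity $d\sigma_{\hat{\pi}(\phi)v, w} = \phi \, d\sigma_{v, w}$ as the key lemma, from which multiplicativity and (via the symmetry $\sigma_{w, v} = \overline{\sigma_{v, w}}$) the adjoint relation each follow in one line. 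This is a cleaner factorization, and your transport identity is essentially the mechanism behind Lemma \ref{LemmaMutualAbsoluteContinuityOfBochnerMeasures} in the next section, so it is a reusable intermediate result rather than extra work. One caveat you share with the paper: the final extension from $C_0(\cS^+)$ to $\sL^{\infty}(\cS^+)$ by bounded pointwise approximation literally only reaches bounded functions of Baire class one; to cover all bounded Borel functions one should either iterate the sequential closure (a functional monotone class argument) or settle for $\sigma_{v,w}$-almost everywhere approximation. This is a standard gloss, present in the paper's own proof and in the cited Einsiedler--Ward argument, so it does not distinguish your proof from theirs.
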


\begin{proof}
If $\beta \in L^1(G, K)$, then we set $\hat{\pi}(\hat{\beta}) = \pi(\beta)$. Since the image of $L^1(G, K)$ under the spherical transform is dense in $C_0(\cS^+)$ as mentioned in Subsection \ref{The spherical transform}, every $\phi \in \sL^{\infty}(\cS^+)$ can be approximated pointwise by some sequence $\hat{\beta}_n \in C_0(\cS^+)$ with $\beta_n \in L^1(G, K)$. Since $\sigma_{v, w}$ has finite total variation by Corollary \ref{CorollarySpectralMeasuresForMatrixCoefficients} for every $v, w \in \cH^K$, Equation \ref{EqSphericalFormulaForTheL^1(G)MatrixCoefficients} and dominated convergence implies that
\begin{align*}
\langle \hat{\pi}(\phi)v, w \rangle_{\cH} := \lim_{n \rightarrow +\infty} \langle \hat{\pi}(\hat{\beta}_n)v, w \rangle_{\cH} = \lim_{n \rightarrow +\infty} \int_{\cS^+} \hat{\beta}_n(\omega) d\sigma_{v, w}(\omega) = \int_{\cS^+} \phi(\omega) d\sigma_{v, w}(\omega) \, . 
\end{align*}
Moreover, $\sB(\cH^K)$ is preserved under weak limits, so we see that $\hat{\pi}(\phi)$ defines a bounded linear operator on $\cH^K$. To see that this yields a $*$-representation, note that
\begin{align*}
\hat{\pi}(\hat{\beta}_1) \hat{\pi}(\hat{\beta}_2)^* = \pi(\beta_1) \pi(\beta_2)^* = \pi(\beta_1 * \beta_2^*) = \hat{\pi}(\hat{\beta}_1 \overline{\hat{\beta}_2})  
\end{align*}
for all $\beta_1, \beta_2 \in L^1(G, K)$, so if $\phi_1, \phi_2 \in \sL^{\infty}(\cS^+)$ then using the same limiting argument as before yields $\hat{\pi}(\phi_1)\hat{\pi}(\phi_2)^* = \hat{\pi}(\phi_1 \overline{\phi_2})$. 
\end{proof}

\section{Spectral measures}
\label{Spectral measures}

We prove existence and uniqueness of spectral measures as given in Theorem \ref{TheoremSpectralMeasureForAlgebraGMaps} stated below, and compute the mass of such spectral measures at the trivial spherical function $1 \in \cS^+$ in Subsection \ref{Mass of the atom at the constant spherical function}.

Let $(G, K)$ be a lcsc Gelfand pair with $K$ compact and let $\sA(G) \subset \sL^1(G)$ be a $\lambda_G$-invariant $*$-subalgebra such that the $*$-algebra of bi-$K$-averaged functions $\sA(G, K) = \sA(G)^{\natural}$ is contained in $\sA(G)$. 

\noindent\textbf{Assumption 1}: We will assume that for every spherical function $\omega \in \cS^+$, there is a function $\varphi \in \sA(G, K)$ such that 
$$\hat{\varphi}(\omega) \neq 0 \, . $$  

\begin{remark}
The $\lambda_G$-invariance of $\sA(G)$ together with Assumption 1 implies that spherical transforms of functions in $\sA(G, K)$ separate points in $\cS^+$. To see this, let $\omega_1, \omega_2 \in \cS^+$ and suppose $\varphi \in \sA(G, K)$ such that $\hat{\varphi}(\omega_1) = \hat{\varphi}(\omega_2) \neq 0$. Taking $g \in G$ such that $\omega_1(g) \neq \omega_2(g)$ and considering $(\lambda_G(g)\varphi)^{\natural} \in \sA(G, K)$, then a computation utilizing the functional equation in Equation \ref{EqSphericalFunctionFunctionalEquation} yields
\begin{align*}
\hat{(\lambda_G(g)\varphi)^{\natural}}(\omega_1) = \omega_1(g^{-1}) \hat{\varphi}(\omega_1) \neq \omega_2(g^{-1}) \hat{\varphi}(\omega_2) = \hat{(\lambda_G(g)\varphi)^{\natural}}(\omega_2) \, . 
\end{align*}
\end{remark}
Moreover, fix a $K$-spherical unitary $G$-representation $(\pi, \cH)$. We will consider a linear $G$-equivariant map $\alpha : \sA(G) \rightarrow \cH$, meaning 
\begin{align*}
\alpha(\lambda_G(g)\varphi) = \pi(g) \alpha(\varphi) 
\end{align*}
for all $g \in G$ and all $\varphi \in \sA(G)$.
%that is continuous in the sense that there is a semi-norm $\norm{\cdot}_{\sA}$ on $\sA(G)$ such that 
%
%$$\norm{\alpha(\varphi)}_{\cH} \leq \norm{\varphi}_{\sA}$$
%
%for all $\varphi \in \sA(G)$. 
%
Note that $\alpha$ maps $\sA(G, K)$ to $\cH^K$. We moreover assume that $\alpha$ is a linear $L^1(G)$-map in the sense that
\begin{align*}
\alpha(\lambda_G(\beta)\varphi) = \pi(\beta) \alpha(\varphi)
\end{align*}
for all $\beta \in L^1(G)$. The goal of this Section is to prove the following refinement of Bochner's Theorem.
\begin{theorem}
\label{TheoremSpectralMeasureForAlgebraGMaps}
There is a unique positive Radon measure $\sigma_{\alpha} \in \Radonplus(\cS^+)$ such that
\begin{align*}
\norm{\alpha(\varphi)}_{\cH}^2 = \int_{\cS^+} |\hat{\varphi}(\omega)|^2 d\sigma_{\alpha}(\omega) 
\end{align*}
for all $\varphi \in \sA(G, K)$. 
\end{theorem}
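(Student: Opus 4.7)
The plan is to reduce the problem to Godement--Bochner (Theorem \ref{TheoremBochner}) by producing, for each individual $\varphi \in \sA(G, K)$, a positive spectral measure and then showing that these measures fit together into a single measure $\sigma_\alpha$.

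\textbf{Step 1 (per-$\varphi$ Bochner measure).} For $\varphi \in \sA(G, K)$ the vector $v_\varphi := \alpha(\varphi)$ lies in $\cH^K$ since $\alpha$ is $G$-equivariant and $\varphi$ is bi-$K$-invariant. Consequently $g \mapsto \langle \pi(g) v_\varphi, v_\varphi\rangle$ is a continuous bi-$K$-invariant positive-definite function, and Theorem \ref{TheoremBochner} gives a unique finite positive Borel measure $\sigma_{v_\varphi}$ on $\cS^+$ with $\sigma_{v_\varphi}(\cS^+) = \|v_\varphi\|_\cH^2$, and such that for every $\psi \in L^1(G, K)$, by \eqref{EqSphericalFormulaForTheL^1(G)MatrixCoefficients},
\[
\langle \pi(\psi) v_\varphi, v_\varphi\rangle = \int_{\cS^+} \hat\psi(\omega)\, d\sigma_{v_\varphi}(\omega).
\]

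\textbf{Step 2 (compatibility between different $\varphi$).} The core calculation is the identity of measures
\[
|\hat{\varphi}_1|^2 \, d\sigma_{v_{\varphi_2}} = |\hat{\varphi}_2|^2 \, d\sigma_{v_{\varphi_1}}, \qquad \varphi_1, \varphi_2 \in \sA(G, K).
\]
To verify this it suffices to test against $\hat\psi$ for $\psi \in L^1(G, K)$, since such spherical transforms are dense in $C_0(\cS^+)$. Writing $|\hat{\varphi}_1|^2 = \hat{\varphi_1^* * \varphi_1}$ via \eqref{EqSphericalTransformConvolutionToProductFormula} and using $\hat\psi\, \hat{\varphi_1^* * \varphi_1} = \hat{\psi * \varphi_1^* * \varphi_1}$, the left-hand side becomes $\langle \pi(\psi * \varphi_1^* * \varphi_1) v_{\varphi_2}, v_{\varphi_2}\rangle$. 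The $L^1$-equivariance assumption converts this into $\langle \pi(\psi) \alpha(\varphi_1 * \varphi_2), \alpha(\varphi_1 * \varphi_2)\rangle$ after repeatedly using commutativity of the Hecke algebra and the identity $\pi(\eta)^* = \pi(\eta^*)$; the symmetric computation for the right-hand side produces $\langle \pi(\psi) \alpha(\varphi_2 * \varphi_1), \alpha(\varphi_2 * \varphi_1)\rangle$, and commutativity $\varphi_1 * \varphi_2 = \varphi_2 * \varphi_1$ closes the identity. This step is where the main algebraic care is required; it is essentially the only nontrivial part of the argument.

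\textbf{Step 3 (gluing to a global Radon measure).} Assumption 1 guarantees that for every $\omega_0 \in \cS^+$ there is $\varphi_{\omega_0} \in \sA(G, K)$ with $\hat{\varphi}_{\omega_0}(\omega_0) \neq 0$; by continuity, $|\hat{\varphi}_{\omega_0}|^2$ is strictly positive on an open neighborhood $U_{\omega_0}$ of $\omega_0$. On each such $U_{\omega_0}$ define
\[
d\sigma_\alpha\big|_{U_{\omega_0}} := |\hat{\varphi}_{\omega_0}|^{-2}\, d\sigma_{v_{\varphi_{\omega_0}}}\big|_{U_{\omega_0}}.
\]
Local finiteness follows since $\sigma_{v_{\varphi_{\omega_0}}}$ is finite and $|\hat{\varphi}_{\omega_0}|^{-2}$ is continuous hence bounded on compact subsets of $U_{\omega_0}$. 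The compatibility identity from Step 2, divided by $|\hat{\varphi}_{\omega_0}|^2 |\hat{\varphi}_{\omega_0'}|^2$, shows the local definitions agree on overlaps $U_{\omega_0} \cap U_{\omega_0'}$, so they glue to a well-defined positive Radon measure $\sigma_\alpha$ on $\cS^+$.

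\textbf{Step 4 (verification and uniqueness).} Fixing $\varphi \in \sA(G, K)$ and using Step 2 with $\varphi_1 = \varphi_{\omega_0}$, $\varphi_2 = \varphi$, one obtains $d\sigma_{v_\varphi}|_{U_{\omega_0}} = |\hat{\varphi}|^2\, d\sigma_\alpha|_{U_{\omega_0}}$, which patches to the global identity $d\sigma_{v_\varphi} = |\hat{\varphi}|^2\, d\sigma_\alpha$ on $\cS^+$. Evaluating the total mass gives
\[
\|\alpha(\varphi)\|_\cH^2 = \sigma_{v_\varphi}(\cS^+) = \int_{\cS^+} |\hat{\varphi}(\omega)|^2\, d\sigma_\alpha(\omega),
\]
which is the desired identity. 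For uniqueness, if $\sigma, \sigma' \in \Radonplus(\cS^+)$ both satisfy this formula, polarization yields $\int \hat{\varphi}_1 \overline{\hat{\varphi}_2}\, d\sigma = \int \hat{\varphi}_1 \overline{\hat{\varphi}_2}\, d\sigma'$ for all $\varphi_1, \varphi_2 \in \sA(G, K)$; the $*$-algebra generated by $\{\hat{\varphi} : \varphi \in \sA(G, K)\}$ in $C_0(\cS^+)$ separates points and vanishes nowhere by Assumption 1, so Stone--Weierstrass implies density and hence $\sigma = \sigma'$.
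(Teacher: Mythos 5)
Your proposal is correct and follows essentially the same route as the paper: per-function Godement--Bochner measures, the compatibility identity $|\hat{\varphi}_1|^2\,d\sigma_{\alpha(\varphi_2)} = |\hat{\varphi}_2|^2\,d\sigma_{\alpha(\varphi_1)}$ (the paper's Lemma \ref{LemmaMutualAbsoluteContinuityOfBochnerMeasures}, which the paper proves via the $\sL^{\infty}(\cS^+)$-functional calculus of Lemma \ref{LemmaSphericalLinftyRepresentation} and you prove by testing against $\hat{\psi}$ for $\psi \in L^1(G,K)$ and invoking density in $C_0(\cS^+)$), and Assumption 1 plus compactness to divide by a locally positive $|\hat{\varphi}_{\omega_0}|^2$. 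The remaining differences are presentational --- you glue local measures over an open cover where the paper directly defines a positive functional on $C_c(\cS^+)$ using a single strictly positive $F \in \sC_+(\cS^+)$ --- and your uniqueness sketch (which the paper's own proof omits entirely) should acknowledge that $\sigma, \sigma'$ may be infinite, so density in $C_0(\cS^+)$ alone does not suffice: one should approximate $\phi/F$ for $\phi \in C_c(\cS^+)$ by elements of the algebra and use that $F$ itself is integrable against both measures to justify the limit.
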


\begin{remark}[The spherical Plancherel measure]
\label{RemarkPlancherelMeasure}
If $\sA(G) \subset \sL^1(G) \cap \sL^2(G)$ satsfies Assumption 1 and $\alpha : \sA(G) \rightarrow L^2(G)$ is the canonical map, then the \emph{spherical Plancherel measure} $\sigma_{\cP} := \sigma_{\alpha}$ defined through Theorem \ref{TheoremSpectralMeasureForAlgebraGMaps} is the unique positive Radon measure on $\cS^+$ satisfying
\begin{align*}
\norm{\varphi}_{L^2(G)}^2 = \int_{\cS^+} |\hat{\varphi}(\omega)|^2 d\sigma_{\cP}(\omega)  
\end{align*}
for all $\varphi \in \sA(G, K)$.
\end{remark}

\subsection{Mutual absolute continuity of Bochner measures}
\label{Mutual absolute continuity of Bochner measures}

By Bochner's Theorem, Theorem \ref{TheoremBochner}, we know that for every $\varphi \in \sA(G, K)$ there is a unique finite positive Borel measure $\sigma_{\alpha(\varphi)}$ on $\cS^+$ such that
\begin{align*}
\langle \pi(g) \alpha(\varphi), \alpha(\varphi) \rangle_{\cH} = \int_{\cS^+} \omega(g) d\sigma_{\alpha(\varphi)}(\omega) \, . 
\end{align*}

Theorem \ref{TheoremSpectralMeasureForAlgebraGMaps} is then equivalent to showing that there is a unique positive Radon measure $\sigma_{\alpha}$ on $\cS^+$ such that
\begin{align*}
d\sigma_{\alpha(\varphi)}(\omega) = |\hat{\varphi}(\omega)|^2 d\sigma_{\alpha}(\omega) 
\end{align*}
as measures for all $\varphi \in \sA(G, K)$.

The first step in proving Theorem \ref{TheoremSpectralMeasureForAlgebraGMaps} is to show that for $\varphi_1, \varphi_2 \in \sA(G, K)$, the measure $\sigma_{\alpha(\varphi_1 * \varphi_2)}$ is absolutely continuous with respect to the measures $\sigma_{\alpha(\varphi_1)}$ and $\sigma_{\alpha(\varphi_2)}$. The precise statement is the following.

\begin{lemma}
\label{LemmaMutualAbsoluteContinuityOfBochnerMeasures}
Let $\varphi_1, \varphi_2 \in \sA(G, K)$. Then for every $\phi \in \sL^{\infty}(\cS^+)$ we have
\begin{align*}
\int_{\cS^+} \phi  |\hat{\varphi}_1|^2 d\sigma_{\alpha(\varphi_2)} = \int_{\cS^+} \phi \, d\sigma_{\alpha(\varphi_1 * \varphi_2)} = \int_{\cS^+} \phi  |\hat{\varphi}_2|^2 d\sigma_{\alpha(\varphi_1)} \, . 
\end{align*}
In particular, $|\hat{\varphi}_1|^2  \sigma_{\alpha(\varphi_2)} = |\hat{\varphi}_2|^2  \sigma_{\alpha(\varphi_1)}$ as measures on $\cS^+$. 
\end{lemma}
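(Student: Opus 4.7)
The plan is to reduce everything to the $L^{\infty}$-functional calculus from Lemma~\ref{LemmaSphericalLinftyRepresentation}. The key observation is that the $L^1$-linearity of $\alpha$ gives
\begin{align*}
\alpha(\varphi_1 * \varphi_2) = \alpha(\lambda_G(\varphi_1)\varphi_2) = \pi(\varphi_1)\alpha(\varphi_2),
\end{align*}
so writing $v = \alpha(\varphi_2) \in \cH^K$ we see that $\sigma_{\alpha(\varphi_1 * \varphi_2)}$ is just the Bochner measure $\sigma_{\pi(\varphi_1)v}$ of the vector $\pi(\varphi_1)v$. Since $\varphi_1 \in \sA(G,K) \subset L^1(G,K)$, we can rewrite $\pi(\varphi_1)v = \hat{\pi}(\hat{\varphi}_1)v$ in terms of the spectral functional calculus.

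Next, for any $\phi \in \sL^{\infty}(\cS^+)$, I would compute the pairing $\langle \hat{\pi}(\phi)\pi(\varphi_1)v, \pi(\varphi_1)v \rangle_{\cH}$ in two ways. On the one hand, by the defining property of $\hat{\pi}$ this equals $\int_{\cS^+} \phi \, d\sigma_{\pi(\varphi_1)v} = \int_{\cS^+} \phi \, d\sigma_{\alpha(\varphi_1 * \varphi_2)}$. On the other hand, using that $\hat{\pi}$ is a $*$-algebra homomorphism (Lemma~\ref{LemmaSphericalLinftyRepresentation}) together with $\hat{\pi}(\hat{\varphi}_1)^* = \hat{\pi}(\overline{\hat{\varphi}_1})$, we may move the $\hat{\pi}(\hat{\varphi}_1)$ factors to the inside:
\begin{align*}
\langle \hat{\pi}(\phi)\hat{\pi}(\hat{\varphi}_1)v, \hat{\pi}(\hat{\varphi}_1)v \rangle_{\cH} = \langle \hat{\pi}(\overline{\hat{\varphi}_1}\, \phi \, \hat{\varphi}_1) v, v \rangle_{\cH} = \int_{\cS^+} \phi |\hat{\varphi}_1|^2 \, d\sigma_v = \int_{\cS^+} \phi |\hat{\varphi}_1|^2 \, d\sigma_{\alpha(\varphi_2)}.
\end{align*}
Comparing the two expressions yields the first claimed identity.

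For the second identity, I would invoke the Gelfand property: since $(G,K)$ is a Gelfand pair, convolution on $\sL^1(G,K)$ is commutative, so $\varphi_1 * \varphi_2 = \varphi_2 * \varphi_1$ and hence $\sigma_{\alpha(\varphi_1*\varphi_2)} = \sigma_{\alpha(\varphi_2*\varphi_1)}$. Applying the argument of the previous paragraph with the roles of $\varphi_1$ and $\varphi_2$ exchanged then gives the mirror identity. The final statement that $|\hat{\varphi}_1|^2 \sigma_{\alpha(\varphi_2)} = |\hat{\varphi}_2|^2 \sigma_{\alpha(\varphi_1)}$ as measures is immediate from the two displayed equalities by varying $\phi$ over, say, indicators of relatively compact Borel sets in $\cS^+$ (both measures are finite since $\sigma_{\alpha(\varphi_i)}$ is finite and $\hat{\varphi}_i \in C_0(\cS^+)$ is bounded).

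The proof presents no real obstacle once the $L^{\infty}$-calculus of Lemma~\ref{LemmaSphericalLinftyRepresentation} is in hand; the only point requiring care is ensuring that $\overline{\hat{\varphi}_1}\phi\hat{\varphi}_1$ is a legitimate element of $\sL^{\infty}(\cS^+)$ to which $\hat{\pi}$ applies, which is immediate because $\hat{\varphi}_1 \in C_0(\cS^+)$ is bounded. The Gelfand commutativity is what turns a one-sided absolute continuity statement into the symmetric one asserted in the lemma.
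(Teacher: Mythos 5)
Your proposal is correct and follows essentially the same route as the paper's proof: both use the $L^1(G)$-equivariance identity $\alpha(\varphi_1*\varphi_2)=\pi(\varphi_1)\alpha(\varphi_2)$, move the factor $\pi(\varphi_1)=\hat{\pi}(\hat{\varphi}_1)$ across the inner product via the $*$-homomorphism property of Lemma~\ref{LemmaSphericalLinftyRepresentation} to obtain $\hat{\pi}(\phi|\hat{\varphi}_1|^2)$, and invoke Gelfand commutativity of convolution for the symmetric statement. The only difference is cosmetic ordering (the paper reduces to one equality up front; you prove it first and then symmetrize).
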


\begin{proof}
Since $(G, K)$ is Gelfand and $\varphi_1, \varphi_2 \in \sA(G, K)$, then $\varphi_1 * \varphi_2 = \varphi_2 * \varphi_1$, so it suffices to prove the first equality in the statement. To do this we use Bochner's Theorem for matrix coefficients in Corollary \ref{CorollarySpectralMeasuresForMatrixCoefficients} and the $\sL^{\infty}(\cS^+)$-homomorphism $\hat{\pi}$ from Lemma \ref{LemmaSphericalLinftyRepresentation} to write
\begin{align*}
\int_{\cS^+} \phi \, d\sigma_{\alpha(\varphi_1 * \varphi_2)} = \Big\langle \hat{\pi}(\phi)\alpha(\varphi_1 * \varphi_2), \alpha(\varphi_1 * \varphi_2) \Big\rangle_{\cH} \, . 
\end{align*}
From the $L^1(G)$-equivariance of $\alpha$,
\begin{align*}
\alpha(\varphi_1 * \varphi_2) = \alpha(\lambda_G(\varphi_1) \varphi_2) = \pi(\varphi_1)\alpha(\varphi_2) \, , 
\end{align*}
so that
\begin{align*}
\int_{\cS^+} \phi \, d\sigma_{\alpha(\varphi_1 * \varphi_2)} &= \Big\langle \hat{\pi}(\phi)\alpha(\varphi_1 * \varphi_2), \alpha(\varphi_1 * \varphi_2) \Big\rangle_{\cH} \\
&= \Big\langle \hat{\pi}(\phi)\pi(\varphi_1)\alpha(\varphi_2), \pi(\varphi_1)\alpha(\varphi_2) \Big\rangle_{\cH} \\
&= \Big\langle \pi(\varphi_1^*) \hat{\pi}(\phi)\pi(\varphi_1)\alpha(\varphi_2), \alpha(\varphi_2) \Big\rangle_{\cH} \, . 
\end{align*}
By definition of the $\sL^{\infty}(\cS^+)$-homomorphism $\hat{\pi}$, the operator in the latter inner product can be rewritten using Lemma \ref{LemmaSphericalLinftyRepresentation} as
\begin{align*}
\pi(\varphi_1^*) \hat{\pi}(\phi)\pi(\varphi_1) = \hat{\pi}(\overline{\hat{\varphi}_1}) \hat{\pi}(\phi) \hat{\pi}(\hat{\varphi}_1) = \hat{\pi}(\phi |\hat{\varphi}_1|^2) \, . 
\end{align*}
Finally, using Bochner's Theorem for matrix coefficents in Corollary \ref{CorollarySpectralMeasuresForMatrixCoefficients} again, we get that
\begin{align*}
\int_{\cS^+} \phi \, d\sigma_{\alpha(\varphi_1 * \varphi_2)} &= \Big\langle \hat{\pi}(\phi |\hat{\varphi}_1|^2) \alpha(\varphi_2), \alpha(\varphi_2) \Big\rangle_{\cH} = \int_{\cS^+} \phi |\hat{\varphi}_1|^2 d\sigma_{\alpha(\varphi_2)} \, .
\end{align*}
\end{proof}

\subsection{Extending to the cone of positive spherical transforms}
\label{Extending to the cone of positive spherical transforms}
From Lemma \ref{LemmaMutualAbsoluteContinuityOfBochnerMeasures} we know that $|\hat{\varphi}_1|^2 \cdot \sigma_{\alpha(\varphi_2)} = |\hat{\varphi}_2|^2 \cdot \sigma_{\alpha(\varphi_1)}$ as finite measures on $\cS^+$ for all $\varphi_1, \varphi_2 \in \sA(G, K)$. In proving Theorem \ref{TheoremSpectralMeasureForAlgebraGMaps} we need to extend this identity to what we will call the \emph{cone of positive spherical transforms} for the algebra $\sA(G, K)$, defined as
\begin{align*}
\sC_+(\cS^+) = \Big\{ \sum_{i = 1}^n |\hat{\varphi}_i|^2 \in C_0(\cS^+) :  \varphi_i \in \sA(G, K) \,\, \forall \, i = 1, \dots n \Big\} \, .  
\end{align*}
If $F = \sum_{i = 1}^n |\hat{\varphi}_i|^2 \in \sC_+(\cS^+)$ then we define a finite positive Borel measure $\sigma_F$ on $\cS^+$ by 
\begin{align*}
\sigma_F = \sum_{i = 1}^n \sigma_{\alpha(\varphi_i)} \, . 
\end{align*}
Note that if $F_1, F_2 \in \sC_+(\cS^+)$ are of the form
\begin{align*}
F_1 = \sum_{i = 1}^m |\hat{\varphi}_i|^2 \quad \mbox{ and } F_2 = \sum_{j = 1}^n |\hat{\psi}_j|^2
\end{align*}
then by Lemma \ref{LemmaMutualAbsoluteContinuityOfBochnerMeasures},
\begin{align*}
F_1 \cdot \sigma_{F_2} = \sum_{i = 1}^m \sum_{j = 1}^n |\hat{\varphi}_i|^2 \sigma_{\alpha(\psi_j)} =  \sum_{i = 1}^m \sum_{j = 1}^n |\hat{\psi}_j|^2 \sigma_{\alpha(\varphi_i)} = F_2 \cdot \sigma_{F_1} \, . 
\end{align*}
as measures on $\cS^+$. More concretely, we have that for every $F_1, F_2 \in \sC_+(\cS^+)$ and every $\phi \in L^{\infty}(\cS^+)$,
\begin{align}
\label{EqMainMeasureIdentity}
\int_{\cS^+} \phi(\omega) F_1(\omega) d\sigma_{F_2}(\omega) = \int_{\cS^+} \phi(\omega) F_2(\omega) d\sigma_{F_1}(\omega) \, . 
\end{align}
Note that this equality of measures implies that 
$$\supp(\sigma_{F_1}) \subset \supp(F_1) \quad \mbox{ and } \quad \supp(\sigma_{F_2}) \subset \supp(F_2) \, . $$
The remaining tool for the proof of Theorem \ref{TheoremSpectralMeasureForAlgebraGMaps} is the following Lemma, which is where we will make use of the Assumption 1 for $\sA(G, K)$ in the beginning of Section \ref{Spectral measures}.

\begin{lemma}
\label{LemmaPositiveSphericalTransformsOnCompactSubsets}
For every compact subset $Q \subset \cS^+$ there is an $F \in \sC_+(\cS^+)$ such that $F(\omega) > 0$ for all $\omega \in Q$. 
\end{lemma}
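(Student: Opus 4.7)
The plan is to combine Assumption 1 pointwise with a compactness argument. By Assumption 1, for each spherical function $\omega \in Q$, I can pick a function $\varphi_\omega \in \sA(G, K)$ whose spherical transform satisfies $\hat{\varphi}_\omega(\omega) \neq 0$, equivalently $|\hat{\varphi}_\omega|^2(\omega) > 0$.

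Next, I would use continuity: since $\sA(G, K) \subset \sL^1(G)$, the spherical transform $\hat{\varphi}_\omega$ lies in $C_0(\cS^+)$ (this is recorded in Definition \ref{DefSphericalTransform}), so $|\hat{\varphi}_\omega|^2$ is a nonnegative continuous function on $\cS^+$. Hence there is an open neighborhood $U_\omega \subset \cS^+$ of $\omega$ on which $|\hat{\varphi}_\omega|^2 > 0$ throughout. The family $\{U_\omega\}_{\omega \in Q}$ is an open cover of the compact set $Q$, so it admits a finite subcover $U_{\omega_1}, \ldots, U_{\omega_n}$, with associated functions $\varphi_{\omega_1}, \ldots, \varphi_{\omega_n} \in \sA(G, K)$.

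Finally, I would set
\[
F \;=\; \sum_{i=1}^n |\hat{\varphi}_{\omega_i}|^2,
\]
which belongs to $\sC_+(\cS^+)$ by definition. For any $\omega \in Q$ there exists an index $i$ with $\omega \in U_{\omega_i}$, so $|\hat{\varphi}_{\omega_i}|^2(\omega) > 0$, and since all other summands are nonnegative we conclude $F(\omega) > 0$, as desired. There is no real obstacle here; the only subtlety worth noting is that we genuinely need the functions $\varphi_\omega$ to live in $\sA(G, K)$ (not merely in $\sA(G)$), but this is exactly what Assumption 1 provides, and $\sA(G, K)$ is closed under sums, so $F \in \sC_+(\cS^+)$ as required.
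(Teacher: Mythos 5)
Your argument is correct and is essentially identical to the paper's proof: both invoke Assumption 1 pointwise, use continuity of the spherical transforms to produce an open cover of $Q$, extract a finite subcover by compactness, and take $F$ to be the corresponding finite sum of $|\hat{\varphi}_{\omega_i}|^2$. Nothing to add.
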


\begin{proof}
By Assumption 1 in the beginning of Section \ref{Spectral measures}, there is for every $\omega \in \cS^+$ a $\varphi_{\omega} \in \sA(G, K)$ such that $\hat{\varphi}_{\omega}(\omega) \neq 0$, so that $|\hat{\varphi}_{\omega}(\omega)|^2 > 0$. Since $\hat{\varphi}_{\omega}$ is continuous, there is an open subset $U_{\omega} \subset \cS^+$ such that $|\hat{\varphi}_{\omega}(\omega')|^2 > 0$ for all $\omega' \in U_{\omega}$, which yields an open cover
\begin{align*}
Q \subset \bigcup_{\omega \in Q} U_{\omega} \, . 
\end{align*}
Since $Q$ is compact, there are $\omega_1, \dots , \omega_n \in Q$ such that
\begin{align*}
Q \subset \bigcup_{i = 1}^n U_{\omega_i} \, , 
\end{align*}
so taking $F = |\hat{\varphi}_{\omega_1}|^2 + \dots + |\hat{\varphi}_{\omega_n}|^2 \in \sC_+(\cS^+)$ finishes the proof.
\end{proof}

\subsection{Proof of Theorem \ref{TheoremSpectralMeasureForAlgebraGMaps}}
\label{Proof of Theorem 3.1}

We now prove Theorem \ref{TheoremSpectralMeasureForAlgebraGMaps} using an argument that can be found for example in \cite[Section 26J-26K, p.98-100]{Loomis1953AnIT}.

\begin{proof}[Proof of Theorem \ref{TheoremSpectralMeasureForAlgebraGMaps}]
As stated in Equation \ref{EqMainMeasureIdentity} we have that 
\begin{align*}
\int_{\cS^+} \phi(\omega) F_1(\omega) d\sigma_{F_2}(\omega) = \int_{\cS^+} \phi(\omega) F_2(\omega) d\sigma_{F_1}(\omega)
\end{align*}
for all $\phi \in \sL^{\infty}(\cS^+)$ and all $F_1, F_2 \in \sC_+(\cS^+)$. By Lemma \ref{LemmaPositiveSphericalTransformsOnCompactSubsets} we may for each $\phi \in C_c(\cS^+)$ take a function $F_{\phi} \in \sC_+(\cS^+)$ that is strictly positive on the compact support $\supp(\phi) \subset \cS^+$ and consider
\begin{align*}
\sigma_{\alpha}(\phi) := \int_{\cS^+} \phi(\omega) \frac{d\sigma_{F_{\phi}}(\omega)}{F_{\phi}(\omega)} \, . 
\end{align*}
We claim that $\sigma_{\alpha}$ is a well-defined positive continuous linear functional on $C_c(\cS^+)$ with respect to the inductive topology over compact subsets.

First, to see that $\sigma_{\alpha}$ is well-defined, suppose that $F_1, F_2 \in \sC_+(\cS^+)$ are strictly positive on $\supp(\phi)$. Then $\phi/F_1, \phi/F_2$ and $\phi/(F_1F_2)$ are compactly supported and continuous, so by Equation \ref{EqMainMeasureIdentity},
\begin{align*}
\int_{\cS^+} \phi(\omega) \frac{d\sigma_{F_{1}}(\omega)}{F_{1}(\omega)} &= \int_{\cS^+} \phi(\omega) \frac{F_2(\omega) d\sigma_{F_{1}}(\omega)}{F_2(\omega)F_{1}(\omega)} \\
&= \int_{\cS^+} \phi(\omega) \frac{F_1(\omega) d\sigma_{F_{2}}(\omega)}{F_2(\omega)F_{1}(\omega)} = \int_{\cS^+} \phi(\omega) \frac{d\sigma_{F_{2}}(\omega)}{F_{2}(\omega)} \, . 
\end{align*}
To see that $\sigma_{\alpha}$ is linear, let $\phi_1, \phi_2 \in C_c(\cS^+)$, $c_1, c_2 \in \C$ and let $F \in \sC_+(\cS^+)$ be strictly positive on $\supp(\phi_1) \cup \supp(\phi_2) \supset \supp(c_1\phi_1 + c_2\phi_2)$, so that
\begin{align*}
\int_{\cS^+} (c_1\phi_1(\omega) + c_2\phi_2(\omega)) \frac{d\sigma_{F}(\omega)}{F(\omega)} = c_1 \int_{\cS^+} \phi_1(\omega) \frac{d\sigma_{F}(\omega)}{F(\omega)} + c_2 \int_{\cS^+} \phi_2(\omega) \frac{d\sigma_{F}(\omega)}{F(\omega)} \, . 
\end{align*}
For the continuity of $\sigma_{\alpha}$, note that if $Q \subset \cS^+$ is compact and $F_Q \in \sC_+(\cS^+)$ is strictly positive on $Q$, then
\begin{align*}
\Big|\int_{\cS^+} \phi(\omega) \frac{d\sigma_{F_Q}(\omega)}{F_Q(\omega)} \Big| \leq \frac{\sigma_{F_Q}(\cS^+)}{\min_{\omega \in Q} F_Q(\omega)} \norm{\phi}_{\infty}
\end{align*}
for all $\phi \in C_c(\cS^+)$ with $\supp(\phi) \subset Q$.
We conclude that $\sigma_{\alpha}$ is a positive continuous linear functional on $C_c(\cS^+)$, and hence a positive Radon measure on $\cS^+$ by the Riesz representation Theorem.

Next, we claim that $\sigma_{F} = F \cdot \sigma_{\alpha}$ as measures on $\cS^+$ for any $F \in \sC_+(\cS^+)$. To see this, let $\phi \in C_b(\cS^+)$ be a positive bounded and continuous function and $F \in \sC_+(\cS^+)$. Take an increasing sequence of positive functions $\chi_n \in C_c(\cS^+)$ such that the compact support $\supp(\chi_n)$ is contained in the open subset $U_{F} = \{ \omega \in \cS^+ : F(\omega) > 0 \}$ and such that $\chi_n \rightarrow 1$ uniformly on $U_{F}$ as $n \rightarrow +\infty$. Then $F$ is strictly positive on the compact support of $F \cdot \chi_n$, so by the monotone convergence Theorem we see that
\begin{align*}
\int_{\cS^+} \phi(\omega) d\sigma_{F}(\omega) &=  \int_{U_{F}} \phi(\omega) F(\omega) \frac{d\sigma_{F}(\omega)}{F(\omega)} \\
&= \lim_{n \rightarrow +\infty} \int_{U_{F}} \phi(\omega) F(\omega) \chi_n(\omega) \frac{d\sigma_{F}(\omega)}{F(\omega)} \\
&= \lim_{n \rightarrow +\infty} \int_{U_{F}} \phi(\omega) F(\omega) \chi_n(\omega) d\sigma_{\alpha}(\omega) = \int_{\cS^+} \phi(\omega) F(\omega) d\sigma_{\alpha}(\omega) \, , 
\end{align*}
which proves the claim. 
Finally, the conclusion of Theorem \ref{TheoremSpectralMeasureForAlgebraGMaps} now follows by taking $F = |\hat{\varphi}|^2$ for $\varphi \in \sA(G, K)$,
\begin{align*}
\norm{\alpha(\varphi)}_{\cH}^2 &= \int_{\cS^+} d\sigma_{\alpha(\varphi)}(\omega) = \int_{\cS^+} d\sigma_F(\omega) = \int_{\cS^+} F(\omega) d\sigma_{\alpha}(\omega) =  \int_{\cS^+} |\hat{\varphi}(\omega)|^2 d\sigma_{\alpha}(\omega) \, . 
\end{align*}
\end{proof}

\subsection{Mass of the atom at the constant spherical function}
\label{Mass of the atom at the constant spherical function}

In this Subsection we show that the spectral measure $\sigma_{\alpha}$ from Theorem \ref{TheoremSpectralMeasureForAlgebraGMaps} has an atom at the constant spherical function $\omega = 1$, corresponding to the trivial $G$-representation, whenever the image $\alpha(\sA(G)) \subset \cH$ projects non-trivially to the subspace $\cH^G$ of $G$-invariant vectors. We will need the following additional assumption on the algebra $\sA(G) \subset \sL^1(G)$ on which $\alpha$ is defined.

\noindent\textbf{Assumption 2}: There is a positive function $\beta \in \sA(G, K)$ whose support $\supp(\beta)$ generates $G$ as a group.

Note that if $v \in \cH^G$ is a $G$-invariant vector, then there is a constant $i(v) \in \C$ such that
\begin{align*}
\langle \alpha(\varphi), v \rangle_{\cH} = i(v) \int_G \varphi(g) dm_G(g) 
\end{align*}
for every $\varphi \in \sA(G)$. In particular, for the projection $v = \Proj_{G}(\alpha(\varphi)) \in \cH^G$ one readily shows that there is a constant $i_{\alpha} \geq 0$ such that 
\begin{align*}
\norm{\Proj_G(\alpha(\varphi))}_{\cH}^2 = i_{\alpha}^2 \, \Big| \int_G \varphi(g) dm_G(g) \Big|^2 \, . 
\end{align*}
We refer to $i_{\alpha}$ as the \emph{intensity} of $\alpha$. The goal of this Subsection is to prove the following.

\begin{proposition}
\label{PropMassOfSpectralMeasureAtOneIsTheIntensity}
The spectral measure of $\alpha$ satisfies $\sigma_{\alpha}(\{1\}) = i_{\alpha}^2$. 
\end{proposition}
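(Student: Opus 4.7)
The plan is to orthogonally decompose $\cH = \cH^G \oplus (\cH^G)^\perp$ and split the map as $\alpha = \alpha_0 + \alpha_1$, where $\alpha_0 = \Proj_G \circ \alpha$ and $\alpha_1 = \alpha - \alpha_0$ take values in $\cH^G$ and $(\cH^G)^\perp$ respectively. Both summands are linear $L^1(G)$-equivariant maps into closed $G$-invariant subspaces of $\cH$, so Theorem \ref{TheoremSpectralMeasureForAlgebraGMaps} provides spectral measures $\sigma_{\alpha_0}$ and $\sigma_{\alpha_1}$, and the Pythagorean identity $\|\alpha(\varphi)\|_{\cH}^2 = \|\alpha_0(\varphi)\|_{\cH}^2 + \|\alpha_1(\varphi)\|_{\cH}^2$ combined with the uniqueness part of Theorem \ref{TheoremSpectralMeasureForAlgebraGMaps} forces the additive splitting $\sigma_\alpha = \sigma_{\alpha_0} + \sigma_{\alpha_1}$.

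Since $G$ acts trivially on $\cH^G$, the only spherical function appearing in $\alpha_0$ is the constant function $1 \in \cS^+$, and the definition of intensity yields $\|\alpha_0(\varphi)\|_{\cH}^2 = i_\alpha^2 |\hat\varphi(1)|^2$ for every $\varphi \in \sA(G, K)$. Uniqueness in Theorem \ref{TheoremSpectralMeasureForAlgebraGMaps} then identifies $\sigma_{\alpha_0} = i_\alpha^2 \delta_1$, reducing the proposition to proving $\sigma_{\alpha_1}(\{1\}) = 0$ for the component of $\alpha$ mapping into $(\cH^G)^\perp$.

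For this I would invoke Assumption 2 by setting $\gamma = \beta * \beta^* \in \sA(G, K)$ with $\beta$ renormalized so that $\|\beta\|_1 = 1$. Then $\gamma$ is a positive, $L^1$-normalized, bi-$K$-invariant, $*$-self-adjoint element whose spherical transform $\hat\gamma = |\hat\beta|^2$ takes values in $[0, 1]$ with $\hat\gamma(1) = 1$, and the operator $\pi(\gamma) = \pi(\beta)\pi(\beta)^*$ is a positive self-adjoint contraction on $\cH$. The key claim is that $\ker(I - \pi(\gamma)) = \cH^G$; granting this, the spectral theorem for the contraction $\pi(\gamma)$ gives strong convergence $\pi(\gamma)^n \to \Proj_{\cH^G}$, so from $\alpha_1(\gamma) \in (\cH^G)^\perp$ one obtains $\alpha_1(\gamma^{*(n+1)}) = \pi(\gamma)^n \alpha_1(\gamma) \to 0$ in $\cH$. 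Re-expressing the norm via Theorem \ref{TheoremSpectralMeasureForAlgebraGMaps},
\begin{align*}
\|\alpha_1(\gamma^{*(n+1)})\|_{\cH}^2 = \int_{\cS^+} \hat\gamma(\omega)^{2(n+1)} d\sigma_{\alpha_1}(\omega) \, ,
\end{align*}
and dominated convergence with majorant $\hat\gamma^2 \in L^1(\sigma_{\alpha_1})$ identifies the limit as $\sigma_{\alpha_1}(\{\omega \in \cS^+ : \hat\gamma(\omega) = 1\})$. Since $\{1\}$ is contained in this set and the left-hand side tends to zero, we conclude $\sigma_{\alpha_1}(\{1\}) = 0$ and therefore $\sigma_\alpha(\{1\}) = i_\alpha^2$.

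The hard part will be the identification $\ker(I - \pi(\gamma)) = \cH^G$, which is where Assumption 2 enters decisively. Given $\pi(\gamma)v = v$, I would expand $\|v\|_{\cH}^2 = \int_G \gamma(g)\langle \pi(g)v, v\rangle dm_G(g)$ and combine $\gamma \geq 0$ with the Cauchy-Schwarz bound $|\langle \pi(g)v, v\rangle| \leq \|v\|_{\cH}^2$ to force $\pi(g)v = v$ for $\gamma$-almost every $g$, and then for every $g$ in the support of $\gamma$ by strong continuity of $\pi$. Since $\supp(\gamma)$ contains a neighborhood of $e$ by a Steinhaus-type argument applied to the positive-measure set $\supp(\beta)$, and Assumption 2 ensures that $\supp(\beta)$ generates $G$, the invariance of $v$ by $\supp(\gamma)$ propagates to full $G$-invariance, yielding $v \in \cH^G$.
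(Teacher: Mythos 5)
Your overall architecture is sound and genuinely close in spirit to the paper's: the paper also reduces everything to an ergodic-average argument whose crux is identifying the fixed vectors of an averaging operator with $\cH^G$. (The paper does not decompose $\sigma_\alpha = \sigma_{\alpha_0} + \sigma_{\alpha_1}$; instead it computes $\lim_n \norm{\pi(\beta_n)\alpha(\varphi)}^2$ in two ways, using the Ces\`aro averages $\beta_n = \tfrac1n\sum_{j=1}^n \beta^{*j}$ and the mean ergodic theorem on one side and dominated convergence against $\sigma_\alpha$ on the other. Your splitting is a fine alternative and your treatment of $\sigma_{\alpha_0} = i_\alpha^2\delta_1$ is correct.)

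However, the key claim $\ker(I - \pi(\gamma)) = \cH^G$ for $\gamma = \beta * \beta^*$ is \emph{false} in the stated generality, and this is a genuine gap. Your argument (correctly) shows that a vector $v$ with $\pi(\gamma)v = v$ is fixed by every $g$ in $\supp(\gamma)$, equivalently by $\supp(\beta)\,\supp(\beta)^{-1}$ up to closure; but Assumption 2 only guarantees that $\supp(\beta)$ generates $G$, and the set $S S^{-1}$ may generate a strictly smaller (open) subgroup than $S$ does. Concretely, take $G = \Z$ with $K$ trivial and $\beta = \delta_1$: then $\supp(\beta) = \{1\}$ generates $\Z$, yet $\beta * \beta^* = \delta_0$, so $\pi(\gamma) = \mathrm{Id}$ and $\ker(I - \pi(\gamma)) = \cH \neq \cH^G$ for, say, the regular representation. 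In that example $\hat\gamma \equiv 1$ on $\cS^+$, so your dominated-convergence step yields $\sigma_{\alpha_1}(\cS^+)$ rather than $\sigma_{\alpha_1}(\{1\})$, and the proof collapses. The Steinhaus observation that $\supp(\gamma)$ contains a neighbourhood of $e$ only gives that the stabilizer of $v$ is an open subgroup, which suffices when $G$ is connected but not for the totally disconnected Gelfand pairs (trees, $p$-adic groups) covered by the paper. The repair is to run the ergodic argument with $\beta$ itself: $\pi(\beta)$ is a (non-self-adjoint) contraction, so von Neumann's mean ergodic theorem applies to the Ces\`aro averages $\pi(\beta_n)$ and converges strongly to the projection onto $\ker(I - \pi(\beta))$, and the real-part identity
\begin{align*}
0 = \int_G \beta(g)\big(\norm{v}^2 - \Re\langle \pi(g)v, v\rangle\big)\, dm_G(g) = \tfrac12 \int_G \beta(g)\,\norm{\pi(g)v - v}^2\, dm_G(g)
\end{align*}
identifies that kernel with $\cH^G$ using the generating set $\supp(\beta)$ directly. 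This is exactly the paper's Lemma on $\cK^{\tau(\beta)} = \cK^G$; with it in place, the rest of your argument goes through.
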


To prove this we will let $\beta \in \sA(G, K)$ be a positive function whose support generates $G$ according to Assumption 2 with 
\begin{align*}
\int_G \beta(g) dm_G(g) = 1 \, . 
\end{align*}
For every integer $n \geq 2$ we let
\begin{align*}
\beta_n = \frac{1}{n} \sum_{j = 1}^n \beta^{* j} \in \sA(G, K) \, , 
\end{align*}
where $\beta^{* j}$ is the convolution of $\beta$ with itself $j$ times. We will now take $\varphi \in \sA(G, K)$ such that
\begin{align*}
\int_G \varphi(g) dm_G(g) = 1
\end{align*}
and proceed to compute the limit 
\begin{align*}
\lim_{n \rightarrow +\infty} \norm{\pi(\beta_n)\alpha(\varphi)}_{\cH}^2
\end{align*}
in two ways, resulting in $i_{\alpha}^2$ and $\sigma_{\alpha}(\{1\})$. For both computations we'll need the following lemmata.

\begin{lemma}
\label{LemmaTauBetaFixedVectorsAreGFixedVectors}
Let $(\tau, \cK)$ be a unitary $G$-representation. Then $\cK^{\tau(\beta)} = \cK^G$. 
\end{lemma}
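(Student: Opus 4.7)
The inclusion $\cK^G \subset \cK^{\tau(\beta)}$ is immediate: if $\tau(g)v = v$ for all $g \in G$, then since $\int_G \beta\, dm_G = 1$,
\begin{align*}
\tau(\beta)v = \int_G \beta(g) \tau(g) v \, dm_G(g) = v \int_G \beta(g) dm_G(g) = v \, .
\end{align*}
The plan is to prove the reverse inclusion $\cK^{\tau(\beta)} \subset \cK^G$ by a convexity argument combined with the generating property of $\supp(\beta)$.

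Suppose $v \in \cK^{\tau(\beta)}$, i.e.\ $\tau(\beta)v = v$. First I would pair both sides with $v$ and use the formula $\langle\tau(\beta)v, v\rangle = \int_G \beta(g) \langle \tau(g)v, v\rangle dm_G(g)$ together with $\int_G \beta\, dm_G = 1$ to obtain
\begin{align*}
\int_G \beta(g) \bigl(\|v\|^2 - \Re\langle \tau(g)v, v\rangle \bigr) dm_G(g) = 0 \, .
\end{align*}
Since $\tau$ is unitary, the identity $\|v - \tau(g)v\|^2 = 2\|v\|^2 - 2\Re\langle \tau(g)v, v\rangle$ rewrites this as
\begin{align*}
\int_G \beta(g) \, \|v - \tau(g)v\|^2 \, dm_G(g) = 0 \, .
\end{align*}
Because $\beta \geq 0$, the integrand vanishes $m_G$-a.e., so $\tau(g)v = v$ for $m_G$-almost every $g$ in the open set $U = \{g \in G : \beta(g) > 0\}$.

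The key step is to upgrade this almost-everywhere statement to an everywhere statement on $\supp(\beta)$. The set $E = \{g \in G : \tau(g)v = v\}$ is closed (as the preimage of $\{0\}$ under the continuous map $g \mapsto \tau(g)v - v$), so $U \setminus E$ is open in $G$. Since $U \setminus E$ has $m_G$-measure zero and every non-empty open subset of $G$ has positive Haar measure, we conclude $U \setminus E = \emptyset$, hence $U \subset E$. Taking closures, $\supp(\beta) = \overline{U} \subset E$. Finally, $E$ is a closed subgroup of $G$ containing $\supp(\beta)$, and by Assumption~2 the support $\supp(\beta)$ generates $G$ as a group, so $E = G$, which yields $v \in \cK^G$.

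The only potential subtlety is the passage from $m_G$-a.e.\ to everywhere in step three; this uses in an essential way both the continuity of $g \mapsto \tau(g)v$ and the standard fact that Haar measure on a lcsc group assigns positive mass to every non-empty open set. The remainder is formal.
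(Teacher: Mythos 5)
Your argument is essentially the paper's: the forward inclusion from $\int_G\beta\,dm_G=1$, and the reverse inclusion from the identity $\int_G\beta(g)\,\lVert \tau(g)v-v\rVert^2\,dm_G(g)=0$ followed by the generating property of $\supp(\beta)$. The one place you diverge is the almost-everywhere-to-everywhere upgrade, which you rightly flag as the only subtlety (the paper silently elides it), but your implementation quietly assumes $\beta$ is continuous: you need $U=\{\beta>0\}$ to be open for $U\setminus E$ to be open, and you need $\supp(\beta)=\overline{U}$, neither of which holds for a general positive $\beta\in\sA(G,K)$ when, as in the paper's main application, $\sA(G)=\Borelbndinfty(G)$ consists of merely measurable functions. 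The repair is to run the argument on the complement instead: $E^c=\{g:\tau(g)v\neq v\}$ is open by strong continuity of $\tau$, the vanishing of the integral forces $\beta=0$ $m_G$-a.e.\ on $E^c$, and hence $E^c$ is disjoint from the (measure-theoretic) support of $\beta$, giving $\supp(\beta)\subset E$ directly; the rest of your argument then goes through unchanged.
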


\begin{proof}
First, if $v \in \cK^G$ then
\begin{align*}
\langle \tau(\beta) v, w \rangle_{\cK} = \int_G \beta(g) \langle \tau(g) v, w \rangle_{\cK} dm_G(g) = \langle v, w \rangle_{\cK} 
\end{align*}
for all $w \in \cK$, using that the integral of $\beta$ with respect to $m_G$ is $1$, so $v \in \cK^{\tau(\beta)}$. 

Conversely, if $v \in \cK^{\tau(\beta)}$ then 
\begin{align*}
\int_G \beta(g) \langle \tau(g) v, w \rangle_{\cK} dm_G(g) = \langle v, w \rangle_{\cK}
\end{align*}
for all $w \in \cK$. Letting $w = v$ and taking real parts of both sides yields
\begin{align*}
0 = \int_G \beta(g) (\norm{v}^2 - \Re\langle \tau(g)v, v \rangle_{\cK} ) dm_G(g) = \frac{1}{2} \int_G \beta(g) \norm{\tau(g)v - v}_{\cK}^2 dm_G(g) \, , 
\end{align*}
so $\tau(g)v = v$ for all $g \in \supp(\beta)$. Since the support $\supp(\beta)$ generates $G$ and $\tau$ is a group homomorphism then $\tau(g)v = v$ for all $g \in G$, so $v \in \cK^G$. 
\end{proof}

Since $\tau(\beta)$-invariant vectors are $G$-invariant and vice versa, we can formulate the mean ergodic Theorem for unitary $G$-representations.

\begin{lemma}[Mean ergodic Theorem]
\label{LemmaMeanErgodicTheoremForUnitaryRepresentations}
Let $(\tau, \cK)$ be a unitary $G$-representation. Then, for every $v \in \cK$,
\begin{align*}
\norm{\tau(\beta_n)v - \Proj_{G}(v)}_{\cK} \underset{n \rightarrow +\infty}{\longrightarrow} 0 \, . 
\end{align*} 
\end{lemma}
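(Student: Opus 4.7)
The plan is to apply von Neumann's mean ergodic theorem to the contraction $T := \tau(\beta)$, observing that under the $L^1$-lifting the Cesàro average $\tau(\beta_n)$ equals $\frac{1}{n}\sum_{j=1}^{n} T^j$. First I would verify that $T$ is a contraction on $\cK$: since $\beta$ is nonnegative with $\int_G \beta\, dm_G = 1$, a standard triangle estimate gives
\begin{align*}
\norm{\tau(\beta)v}_{\cK} \leq \int_G \beta(g)\norm{\tau(g)v}_{\cK}\, dm_G(g) = \norm{v}_{\cK}.
\end{align*}

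The next step is to pin down the orthogonal decomposition along which the averages split. Lemma \ref{LemmaTauBetaFixedVectorsAreGFixedVectors} identifies $\ker(I - T) = \cK^{\tau(\beta)} = \cK^G$. To describe $(\cK^G)^\perp$ I would invoke the standard Hilbert-space fact that a contraction satisfies $\ker(I - T) = \ker(I - T^*)$: if $Tv = v$ with $\norm{T} \leq 1$, then
\begin{align*}
\norm{T^*v - v}_{\cK}^2 = \norm{T^*v}_{\cK}^2 - 2\,\Re\langle v, Tv\rangle_{\cK} + \norm{v}_{\cK}^2 = \norm{T^*v}_{\cK}^2 - \norm{v}_{\cK}^2 \leq 0,
\end{align*}
forcing $T^*v = v$. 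Consequently $(\cK^G)^\perp = (\ker(I - T^*))^\perp = \overline{\operatorname{ran}(I - T)}$.

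With the decomposition $\cK = \cK^G \oplus \overline{\operatorname{ran}(I - T)}$ in hand, I would split $v = \Proj_G(v) + v_1$. On the invariant component, Lemma \ref{LemmaTauBetaFixedVectorsAreGFixedVectors} gives $T\Proj_G(v) = \Proj_G(v)$, so $\tau(\beta_n)\Proj_G(v) = \Proj_G(v)$ for every $n$. On the complementary component, the uniform bound $\norm{\tau(\beta_n)}_{\sB(\cK)} \leq 1$ reduces the claim on $\overline{\operatorname{ran}(I - T)}$ to a density argument on $\operatorname{ran}(I - T)$, where the telescoping identity
\begin{align*}
\tau(\beta_n)(I - T)w = \frac{1}{n}\sum_{j=1}^n (T^j - T^{j+1})w = \frac{1}{n}\bigl(Tw - T^{n+1}w\bigr)
\end{align*}
has norm at most $2\norm{w}_{\cK}/n$, which vanishes as $n \to +\infty$. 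Adding the two pieces yields $\tau(\beta_n)v \to \Proj_G(v)$.

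The only genuinely delicate step is the identification $(\cK^G)^\perp = \overline{\operatorname{ran}(I - T)}$, which relies essentially on the contraction bound $\norm{T}_{\sB(\cK)} \leq 1$ in combination with Lemma \ref{LemmaTauBetaFixedVectorsAreGFixedVectors}; once this structural decomposition is established, the rest is a one-line telescoping estimate together with the uniform-boundedness/density principle.
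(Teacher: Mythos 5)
The paper states this lemma without proof, treating it as the classical von Neumann mean ergodic theorem, so there is no in-paper argument to compare against; your write-up supplies exactly the standard proof and it is correct and complete. All the steps check out: $\tau(\beta_n)=\frac{1}{n}\sum_{j=1}^n T^j$ with $T=\tau(\beta)$ because the lifted representation is multiplicative on convolutions; the contraction bound follows from $\beta\ge 0$ and $\int_G\beta\,dm_G=1$; the identity $\ker(I-T)=\ker(I-T^*)$ for a contraction combined with Lemma \ref{LemmaTauBetaFixedVectorsAreGFixedVectors} gives the orthogonal decomposition $\cK=\cK^G\oplus\overline{\mathrm{ran}(I-T)}$; and the telescoping estimate together with $\norm{\tau(\beta_n)}_{\sB(\cK)}\le 1$ and density handles the complementary summand. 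This is precisely the argument the authors are implicitly invoking, and you have correctly isolated the one nontrivial point, namely that $(\cK^G)^\perp=\overline{\mathrm{ran}(I-T)}$ requires the contraction property and not merely normality or unitarity of $T$.
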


\begin{proof}[Proof of Proposition \ref{PropMassOfSpectralMeasureAtOneIsTheIntensity}]
First we make use of the mean ergodic Theorem in Lemma \ref{LemmaMeanErgodicTheoremForUnitaryRepresentations} to find that 
\begin{align*}
\lim_{n \rightarrow +\infty} \norm{\pi(\beta_n)\alpha(\varphi)}_{\cH}^2 &= \norm{\Proj_G(\alpha(\varphi))}_{\cH}^2 = i_{\alpha}^2 \Big| \int_G \varphi(g) dm_G(g) \Big|^2 = i_{\alpha}^2 \, . 
\end{align*}
On the other hand, if $\omega \in \cS^+$ is a positive-definite spherical function with associated $K$-spherical irreducible GNS-representation $(\pi_{\omega}, \cH_{\omega}, v_{\omega})$, then
\begin{align*}
\hat{\beta}_n(\omega) = \langle \pi_{\omega}(\beta_n)v_{\omega}, v_{\omega} \rangle_{\cH_{\omega}} \underset{n \rightarrow +\infty}{\longrightarrow} \langle \Proj_G(v_{\omega}), v_{\omega} \rangle_{\cH_{\omega}} = \begin{cases} 1 &\mbox{ if } \omega = 1 \\ 0 &\mbox{ else }. \end{cases} 
\end{align*}
Thus $\hat{\beta}_n \rightarrow \chi_{\{1\}}$ pointwise on $\cS^+$. Since 
\begin{align*}
|\hat{\beta}_n(\omega)|^2 \leq \norm{v_{\omega}}_{\cH_{\omega}}^2 \Big| \int_G \beta(g) dm_G(g) \Big|^2 = 1
\end{align*}
for every $\omega \in \cS^+$ and 
\begin{align*}
\int_{\cS^+} |\hat{\varphi}(\omega)|^2 d\sigma_{\alpha}(\omega) = \norm{\alpha(\varphi)}_{\cH}^2 < +\infty
\end{align*}
by Theorem \ref{TheoremSpectralMeasureForAlgebraGMaps} then Theorem \ref{TheoremSpectralMeasureForAlgebraGMaps} and dominated convergence allows us to compute the desired limit as 
\begin{align*}
\lim_{n \rightarrow +\infty} \norm{\pi(\beta_n)\alpha(\varphi)}_{\cH}^2 &= \lim_{n \rightarrow +\infty} \norm{\alpha(\lambda_G(\beta_n)\varphi)}_{\cH}^2 = \lim_{n \rightarrow +\infty} \norm{\alpha(\beta_n * \varphi)}_{\cH}^2 \\
&= \lim_{n \rightarrow +\infty} \int_{\cS^+} |\hat{\beta}_n(\omega)|^2 |\hat{\varphi}(\omega)|^2 d\sigma_{\alpha}(\omega) \\
&= |\hat{\varphi}(1)|^2 \sigma_{\alpha}(\{1\}) = \sigma_{\alpha}(\{1\}) \, .
\end{align*}
\end{proof}

\section{Random measures on commutative spaces}
\label{Random measures on commutative spaces}

The commutative metric spaces that we consider are introduced in the first Subsection, and in Subsection \ref{Invariant random measures} we define invariant locally square-integrable random measures on such spaces as well as the associated Bartlett spectral measures.

\subsection{Commutative spaces}
\label{Commutative spaces}

Let $(X, d)$ be a non-compact proper metric space, fix a basepoint $x_o \in X$ and let $\Isom(X, d)$ denote the group of isometries of $(X, d)$ endowed with the topology of pointwise convergence, which is lcsc by \cite[Lemma 5.B.4, p.143]{CornulierDeLaHarpeIsometryGroupsOfProperMetricSpaces}. Suppose that there is a closed unimodular subgroup $G < \Isom(X, d)$ of isometries acting transitively on $X$, so that $X$ is homeomorphic to $G/K$ with $K = \Stab_G(x_o)$ being the stabilizer of $x_o$ in $G$. Assume in addition that $K < G$ is compact. The space $X$ is called \emph{commutative} if $(G, K)$ is a Gelfand pair. We fix a Haar measure $m_G$ on $G$, the Haar probability measure $m_K$ on $K$ and consider the following $G$-invariant positive Radon measure on $X$,
\begin{align*}
m_X(B) = \int_G \chi_B(g.x_o) dm_G(g) \, , \quad B \subset X \mbox{ Borel}\, . 
\end{align*}
Note that $m_X$ is the unique positive $G$-invariant measure on $X$ up to scaling. We will assume the following small volume property on $m_X$, known as \emph{finite upper local dimension}, see for example \cite[Definition 3.5, p.21]{GorodnikNevoBook}.

\noindent\textbf{Assumption 2}: There are constanst $m_o, d > 0$ such that $m_X(B_{\varepsilon}(x_o)) \geq m_o\varepsilon^d$ as $\varepsilon \rightarrow 0^+$.

Regarding functions, if $f : X \rightarrow \C$ is a measurable function on $X$ we write $\varphi_f : G \rightarrow \C$ for the corresponding right-$K$-invariant measurable function on $G$ defined by $\varphi_f(g) = f(g.x_o)$ for all $g \in G$. Note that $\varphi_f$ is compactly supported if and only if $f$ is. In order to reverse this correspondence, we will use the existence of a Borel measurable \emph{section} $s : X \rightarrow G$ satisfying $s(x).x_o = x$ for every $x \in X$ and such that $s(B) \subset G$ is pre-compact for every bounded $B \subset X$, see \cite[Lemma 1.1]{MackeyInducedRepsI}. Fixing such a section $s$, we denote by $P_K : \Borelinfty(G) \rightarrow \Borelinfty(X)$ the right-$K$-averaging operator
\begin{align*}
P_K\varphi(x) = \int_K \varphi(s(x)k) dm_K(k) \, . 
\end{align*}
Since $K < G$ is a compact subgroup, $P_K$ takes compactly supported functions to compactly supported functions. Moreover, we will denote the left-regular action of $G$ on measurable functions $f : X \rightarrow \C$ by $\lambda_X(g)f(x) = f(g^{-1}.x)$.

\subsection{Invariant random measures}
\label{Invariant random measures}

Denote by $\Radonplus(X)$ the space of positive Radon measures on $X$, endowed with the smallest $\sigma$-algebra such that for every $f \in \Borelbndinfty(X)$, the \emph{linear statistic} $\bS f : \Radonplus(X) \rightarrow \C$ given by
\begin{align*}
\bS f(p) = \int_X f(x) dp(x) \, , \quad p \in \Radonplus(X)
\end{align*}
is measurable. By a \emph{random measure on $X$} we mean a probability measure $\mu$ on $\Radonplus(X)$. Such a random measure $\mu$ is \emph{invariant} if $g_*\mu = \mu$ for all $g \in G$ and \emph{locally $q$-integrable} for $q \in [1, +\infty)$ if $\bS f \in L^q(\mu)$ for every $f \in \Borelbndinfty(X)$, in other words
\begin{align*}
\int_{\Radonplus(X)} \Big| \int_X f(x) dp(x) \Big|^q d\mu(p) < +\infty \, . 
\end{align*}
Note that a locally $q$-integrable random measure is also locally $q'$-integrable for every $q' \leq q$. 

\textit{We will from now on assume that random measures $\mu$ are invariant locally and square-integrable unless told otherwise.}

For such a random measure $\mu$ we can consider its \emph{$\mu$-expectation}, which by the uniqueness of the invariant measure $m_X$ on $X$ up to scaling is 
\begin{align*}
\Emu(\bS f) = \int_{\Radonplus(X)} \Big( \int_X f(x) dp(x) \Big) d\mu(p) = i_{\mu} \int_X f(x) dm_X(x) \, , \quad f \in \Borelbndinfty(X)
\end{align*}
where $i_{\mu} > 0$ is the \emph{intensity} of $\mu$. With the $\mu$-expectation defined we introduce the \emph{discrepancy statistic} of a function $f \in \Borelbndinfty(X)$ as
\begin{align*}
\bS_0 f = \bS f - i_{\mu} \int_X f(x) dm_X \in L^2_0(\mu)   
\end{align*}
and define the \emph{$\mu$-variance} of a linear statistic $\bS f$, $f \in \Borelbndinfty(X)$, as the squared $L^2$-norm of the corresponding discrepancy statistic,
\begin{align*}
\Varmu(\bS f) = \norm{\bS_0 f}_{L^2(\mu)}^2 = \int_{\Radonplus(X)} \Big|\bS f(p) - i_{\mu}\int_X f(x) dm_X(x) \Big|^2 d\mu(p) \, . 
\end{align*}
To put ourselves into the context of Section \ref{Spectral measures}, consider the associated \emph{Koopman representation} $\pi_{\mu} : G \rightarrow \sU(L^2(\mu))$ given by
\begin{align*}
\pi_{\mu}(g)\Psi(p) = \Psi(g^{-1}_*p) \, , \quad g \in G \, , \Psi \in L^2(\mu) \, .
\end{align*}
The lifted $*$-algebra representation $\pi_{\mu} : L^1(G) \rightarrow \sB(L^2(\mu))$ is 
\begin{align*}
\pi_{\mu}(\varphi)\Psi(p) = \int_G \varphi(g) \Psi(g^{-1}_*p) dm_G(g) 
\end{align*}
and if $\Psi = \bS f$ is a linear statistic then one verifies the equivariance properties
\begin{align*}
\pi_{\mu}(g)\bS f(p) = \bS(\lambda_X(g)f)(p) \, , \quad \pi_{\mu}(\varphi)\bS f(p) = \bS(\lambda_X(\varphi)f)(p) \, . 
\end{align*}
With the unitary $G$-representation $(\pi_{\mu}, L^2(\mu))$ and the algebra $\sA(G) = \Borelbndinfty(G)$ we are in the setting of Section \ref{Spectral measures}. By Theorem \ref{TheoremSpectralMeasureForAlgebraGMaps} we have the following:
\begin{enumerate}
    \item For the map $\alpha : \Borelbndinfty(G) \rightarrow L^2(\mu)$ given by
    \begin{align*}
    \alpha(\varphi)(p) = \bS(P_K \varphi)(p) = \int_{X \times K} \varphi(s(x)k) d(p \otimes m_K)(x, k) \, , 
    \end{align*}
    there is a measure $\sigma_{\mu}^+ := \sigma_{\alpha} \in \Radonplus(\cS^+)$ such that 
    \begin{align*}
    \Emu(|\bS f|^2) = \norm{\alpha(\varphi_f)}_{L^2(\mu)}^2 = \int_{\cS^+} |\hat{\varphi}_f(\omega)|^2 d\sigma_{\mu}^+(\omega) \, , \quad f \in \Borelbndinfty(X)^K \, . 
    \end{align*}
    \item For $\alpha_0 : \Borelbndinfty(G) \rightarrow L^2_0(\mu)$ given by
    \begin{align*}
    \alpha_0(\varphi)(p) = \alpha(\varphi)(p) - i_{\mu} \int_G \varphi(g) dm_G(g) = \bS_0(P_K \varphi)(p)\, , 
    \end{align*}
    there is a measure $\sigma_{\mu} := \sigma_{\alpha_0} \in \Radonplus(\cS^+)$ such that
    \begin{align*}
    \Varmu(\bS f) = \norm{\alpha_0(\varphi_f)}_{L^2(\mu)}^2 = \int_{\cS^+} |\hat{\varphi}_f(\omega)|^2 d\sigma_{\mu}(\omega) \, , \quad f \in \Borelbndinfty(X)^K \, . 
    \end{align*}
    The precise relation between the two spectral measures is $\sigma_{\mu} = \chi_{\cS^+ \backslash \{1\}} \cdot \sigma_{\mu}^+$ by Proposition \ref{PropMassOfSpectralMeasureAtOneIsTheIntensity}, and we refer to $\sigma_{\mu}$ as the \emph{Bartlett spectral measure} of $\mu$.
\end{enumerate}

\section{Upper bounds for number variances with spectral gap}
\label{Upper bounds for number variances with spectral gap}

In this Section we prove Theorem \ref{Theorem2} using important techniques developed by Gorodnik-Nevo in \cite{GorodnikNevoBook}. We will first provide upper and lower bounds on counting statistics over general bounded Borel sets in proper metric homogeneous spaces as in the last Section, which will then provide us with a regularized upper bound on the variance of counting statistics. Then in Subsection \ref{Spherical integrability coefficients} we make assumptions on the uniform integrability of spherical functions in the support of the Bartlett spectral measure, followed by an admissibility assumption on counting statistics in Subsection \ref{Admissible sequences} to get sharper explicit bounds. Finally, we conclude the proof of Theorem \ref{Theorem2} in Subsection \ref{Proof of Theorem 1.3}.

\subsection{Upper and lower bounds for counting statistics}
\label{Upper and lower bounds for counting statistics}

Let $V \subset G$ denote an open pre-compact neighbourhood of the identity $e \in G$ and let $\rho_V \in \Borelbndinfty(G)$ be a right-$K$-invariant positive function with support in $V$ such that
\begin{align*}
\int_G \rho_V(g) dm_G(g) = 1 \, . 
\end{align*}
If $\tilde{D} \subset G$ is a Borel set, we let $D = \tilde{D}.o$ and define
\begin{align*}
\tilde{D}_V^- = \bigcap_{v \in V} v\tilde{D} \, , \quad \tilde{D}_V^+ = \bigcup_{v \in V} v\tilde{D} = V\tilde{D} \, . 
\end{align*}
Then $\tilde{D}_V^-$ is closed in $G$ and $\tilde{D}_V^+$ is open in $G$, and the following inclusions hold,
\begin{align*}
\bigcup_{v \in V} v\tilde{D}_V^- \subset \tilde{D} \subset \bigcap_{v \in V} v\tilde{D}_V^+ \, . 
\end{align*}

\begin{lemma}
\label{LemmaUpperAndLowerConvolutionBoundsOnIndicatorFunctions}
Let $V \subset G$ be an open pre-compact symmetric neighbourhood of the identity. Then for every Borel set $\tilde{D} \subset G$ with $D = \tilde{D}.o$,
\begin{align*}
(\rho_V * \chi_{\tilde{D}_V^-})(s(x)) \leq \chi_{D}(x) \leq (\rho_V * \chi_{\tilde{D}_V^+})(s(x)) 
\end{align*}
for every $x \in X$. 
\end{lemma}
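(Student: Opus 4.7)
My approach is to deduce the claim by averaging pointwise indicator inequalities coming from the set inclusions $\bigcup_{v \in V} v\tilde{D}_V^- \subset \tilde{D} \subset \bigcap_{v \in V} v\tilde{D}_V^+$ recorded just above the statement, using that $\rho_V$ is a probability density supported on $V$.

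The first step is to unpack those inclusions elementwise: for every $v \in V$ one has $v\tilde{D}_V^- \subset \tilde{D}$ and $\tilde{D} \subset v\tilde{D}_V^+$. The symmetry $V = V^{-1}$ is what makes this work, since it guarantees $v^{-1} \in V$, hence $e \in vV$, so that $v\tilde{D}_V^- = \bigcap_{w \in V} vw\tilde{D} \subset v v^{-1}\tilde{D} = \tilde{D}$ and $\tilde{D} \subset vV\tilde{D} = v\tilde{D}_V^+$. Passing to indicators on $G$ gives, for every $v \in V$ and $g \in G$,
\begin{equation*}
\chi_{\tilde{D}_V^-}(v^{-1}g) \;\leq\; \chi_{\tilde{D}}(g) \;\leq\; \chi_{\tilde{D}_V^+}(v^{-1}g).
\end{equation*}

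Next I would multiply through by $\rho_V(v) \geq 0$ and integrate over $G$ against $m_G$. Since $\rho_V$ is supported on $V$ and integrates to $1$, applying the convolution identity $(\rho_V * \chi_A)(g) = \int_G \rho_V(v)\chi_A(v^{-1}g)\,dm_G(v)$ on both ends, and pulling the constant $\chi_{\tilde{D}}(g)$ out of the middle integral, yields the sandwich
\begin{equation*}
(\rho_V * \chi_{\tilde{D}_V^-})(g) \;\leq\; \chi_{\tilde{D}}(g) \;\leq\; (\rho_V * \chi_{\tilde{D}_V^+})(g), \qquad g \in G.
\end{equation*}

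The final step is to evaluate at $g = s(x)$ and identify $\chi_{\tilde{D}}(s(x))$ with $\chi_D(x)$. Because $s(x).x_o = x$ and $D = \tilde{D}.x_o$, this identification is immediate provided $\tilde{D}$ is right-$K$-invariant, which is the natural reading of the setup (i.e., $\tilde{D}$ is the full preimage of $D$ under $G \to G/K$). The argument is thus essentially a bookkeeping exercise; the one place where anything has to be watched carefully is the symmetry $V = V^{-1}$, which is precisely what allows the erosion $\tilde{D}_V^-$ and the dilation $\tilde{D}_V^+$ to sandwich $\tilde{D}$ from both sides, and hence what allows $\rho_V$ to transport the inequalities across convolution.
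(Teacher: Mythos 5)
Your proof is correct and follows essentially the same route as the paper: both arguments rest on the inclusions $V\tilde{D}_V^- \subset \tilde{D} \subset \bigcap_{v\in V} v\tilde{D}_V^+$ (which use the symmetry of $V$) together with the fact that $\rho_V$ is a probability density supported in $V$ — you average the pointwise indicator inequalities against $\rho_V(v)\,dm_G(v)$, while the paper changes variables inside the convolution and inspects the domain $s(x)(\tilde{D}_V^{\pm})^{-1}\cap V$, but these are the same computation in different bookkeeping. Your caveat that the identification $\chi_{\tilde{D}}(s(x)) = \chi_D(x)$ requires $\tilde{D}$ to be right-$K$-invariant (equivalently $s(D)\subset\tilde{D}$) is accurate and is not a defect of your argument relative to the paper's: the paper's own proof of the upper bound uses the same implicit assumption when it asserts that $x\in D$ implies $s(x)\in\tilde{D}$, and in the intended application $\tilde{D}=\tilde{B}_r$ is a full $K$-preimage, so this is harmless.
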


\begin{proof}
For the first inequality, a change of variables yields
\begin{align*}
(\rho_V * \chi_{\tilde{D}_V^-})(s(x)) = \int_{\tilde{D}_V^-} \rho_V(s(x)g^{-1}) dm_G(g) = \int_{s(x)(\tilde{D}_V^-)^{-1} \cap V} \rho_V(g) dm_G(g) \, . 
\end{align*}
Note that if $s(x)(\tilde{D}_V^-)^{-1} \cap V \neq \varnothing$ then $s(x) \in V \tilde{D}_V^- \subset \tilde{D}$, so that $x \in D$. Since $\rho_V$ integrates to $1$ over $G$, we get that 
\begin{align*}
(\rho_V * \chi_{\tilde{D}_V^-})(s(x)) = \chi_D(x) \int_{s(x)(\tilde{D}_V^-)^{-1} \cap V} \rho_V(g) dm_G(g) \leq \chi_D(x) \, . 
\end{align*}
For the second inequality, we similarly write
\begin{align*}
(\rho_V * \chi_{\tilde{D}_V^+})(s(x)) = \int_{\tilde{D}_V^+} \rho_V(s(x)g^{-1}) dm_G(g) = \int_{s(x)(\tilde{D}_V^+)^{-1} \cap V} \rho_V(g) dm_G(g) \, . 
\end{align*}
If $x \in D$, then $s(x) \in \tilde{D} \subset v \tilde{D}_V^+$ for every $v \in V$, meaning that $V \subset s(x) (\tilde{D}_V^+)^{-1}$ and hence
\begin{align*}
\chi_D(x) \leq \int_{s(x)(\tilde{D}_V^+)^{-1} \cap V} \rho_V(g) dm_G(g) = (\rho_V * \chi_{\tilde{D}_V^+})(s(x)) \, . 
\end{align*}
\end{proof}

It follows from Lemma \ref{LemmaUpperAndLowerConvolutionBoundsOnIndicatorFunctions} that the linear statistic $\bS\chi_{D}$ can be bounded from above and below by
\begin{align}
\label{EqLinearStatisticOfIndicatorUpperAndLowerBounds}
\bS(P_K(\rho_V * \chi_{\tilde{D}_V^-})) \leq \bS\chi_{D} \leq \bS(P_K(\rho_V * \chi_{\tilde{D}_V^+}))
\end{align}
where again $P_K\varphi(x) = \int_K \varphi(s(x)k) dm_K(k)$ for $\varphi \in \Borelbndinfty(G)$. We will next bound the discrepancy statistic
\begin{align*}
\bS_0 \chi_D = \bS\chi_D - i_{\mu} m_X(D)
\end{align*}
from above to produce the following upper bound on the number variance of $D$. To keep sight of the goal of this Section, which is to prove Theorem \ref{Theorem2}, we remind ourselves that
\begin{align*}
\Varmu(\bS \chi_D) = \norm{\bS_0 \chi_D}_{L^2(\mu)}^2  \, . 
\end{align*}

\begin{corollary}
\label{CorollaryTriangleInequalityUpperBoundOnL2Discrepancy}
Let $\tilde{D} \subset G$ be a Borel set with $D = \tilde{D}.o$. Then
\begin{align*}
\norm{\bS_0\chi_D}_{L^2(\mu)} \leq \norm{\bS_0(P_K(\rho_V * \chi_{\tilde{D}_V^-}))}_{L^2(\mu)} + \norm{\bS_0(P_K(\rho_V * \chi_{\tilde{D}_V^+}))}_{L^2(\mu)} + i_{\mu} m_G(\tilde{D}_V^+ \backslash \tilde{D}_V^-)  \, . 
\end{align*}
\end{corollary}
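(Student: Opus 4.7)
The proof will combine the pointwise sandwich from Lemma \ref{LemmaUpperAndLowerConvolutionBoundsOnIndicatorFunctions} with the identification of means of linear statistics, and then apply the triangle inequality in $L^2(\mu)$. Throughout I abbreviate $\varphi_V^{\pm} = P_K(\rho_V * \chi_{\tilde{D}_V^{\pm}})$, so that Equation \ref{EqLinearStatisticOfIndicatorUpperAndLowerBounds} reads $\bS\varphi_V^- \leq \bS\chi_D \leq \bS\varphi_V^+$ pointwise on $\Radonplus(X)$.

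First I would compute the $\mu$-expectations of the three statistics. Since $\Emu(\bS f) = i_{\mu} \int_X f \id m_X$ for every $f \in \Borelbndinfty(X)$, the main fact I need is the unfolding identity
\begin{align*}
\int_X P_K\psi \id m_X = \int_G \psi \id m_G
\end{align*}
for any $\psi \in \Borelbndinfty(G)$, which follows from the definition of $m_X$ via a fixed section $s : X \to G$, right-invariance of $m_G$, and the fact that $m_K$ is a probability measure. Combined with $\int_G \rho_V \id m_G = 1$, unimodularity of $G$, and Fubini, this yields $\int_X \varphi_V^{\pm}\id m_X = m_G(\tilde{D}_V^{\pm})$, so that $\Emu(\bS\varphi_V^{\pm}) = i_{\mu} m_G(\tilde{D}_V^{\pm})$. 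On the other hand, after replacing $\tilde{D}$ by $\tilde{D}K$ if necessary, one has $\Emu(\bS\chi_D) = i_{\mu} m_X(D) = i_{\mu} m_G(\tilde{D})$.

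Next I would subtract the respective means from the pointwise sandwich. Using $\tilde{D}_V^- \subset \tilde{D} \subset \tilde{D}_V^+$ (which holds since $e \in V$), this produces the real-valued two-sided bound
\begin{align*}
\bS_0\varphi_V^- - i_{\mu}\, m_G(\tilde{D} \setminus \tilde{D}_V^-) \;\leq\; \bS_0\chi_D \;\leq\; \bS_0\varphi_V^+ + i_{\mu}\, m_G(\tilde{D}_V^+ \setminus \tilde{D}).
\end{align*}

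Finally, since $\bS_0\chi_D$ is a real-valued random variable sandwiched between $A$ and $B$, its absolute value satisfies $|\bS_0\chi_D| \leq |A| + |B|$ pointwise. Taking $L^2(\mu)$-norms and applying the triangle inequality to the right-hand side, together with the additive identity $m_G(\tilde{D}_V^+ \setminus \tilde{D}) + m_G(\tilde{D} \setminus \tilde{D}_V^-) = m_G(\tilde{D}_V^+ \setminus \tilde{D}_V^-)$ (the two sets are disjoint with union $\tilde{D}_V^+ \setminus \tilde{D}_V^-$), yields the claimed inequality. The only subtle point is the bookkeeping of the means $i_{\mu} m_G(\tilde{D}_V^{\pm})$ vs.\ $i_{\mu} m_G(\tilde{D})$, but this reduces entirely to the unfolding identity above and presents no real obstacle.
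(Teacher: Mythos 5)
Your proposal is correct and follows essentially the same route as the paper: the pointwise sandwich from Lemma \ref{LemmaUpperAndLowerConvolutionBoundsOnIndicatorFunctions}, the bookkeeping of means via $m_G(\rho_V * \chi_{\tilde{D}_V^{\pm}}) = m_G(\tilde{D}_V^{\pm})$, and the triangle inequality combined with $m_G(\tilde{D}_V^+ \setminus \tilde{D}) + m_G(\tilde{D} \setminus \tilde{D}_V^-) = m_G(\tilde{D}_V^+ \setminus \tilde{D}_V^-)$. Your explicit remark about identifying $m_X(D)$ with $m_G(\tilde{D})$ (replacing $\tilde{D}$ by $\tilde{D}K$ if necessary) is a point the paper passes over silently, but it does not change the argument.
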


\begin{proof}
The inequalities in Equation \ref{EqLinearStatisticOfIndicatorUpperAndLowerBounds} can be written in terms of discrepancies as
\begin{align*}
\bS_0(P_K(\rho_V * \chi_{\tilde{D}_V^-})) + i_{\mu}(m_G(\rho_V &* \chi_{\tilde{D}_V^-}) - m_G(\tilde{D})) \\
&\leq \bS_0\chi_D \\
&\leq \bS_0(P_K(\rho_V * \chi_{\tilde{D}_V^+})) + i_{\mu}(m_G(\rho_V * \chi_{\tilde{D}_V^+}) - m_G(\tilde{D})) \, .
\end{align*}
Note that 
\begin{align*}
m_G(\rho_V * \chi_{\tilde{D}_V^{\pm}}) = m_G(\rho_V) m_G(\tilde{D}_V^{\pm}) = \begin{cases} m_G(\tilde{D}) + m_G(\tilde{D}_V^+\backslash\tilde{D}) &\mbox{ if } + \\ m_G(\tilde{D}) - m_G(\tilde{D}\backslash\tilde{D}_V^-) &\mbox{ if } - \, ,\end{cases}
\end{align*}
which yields the inequalities
\begin{align*}
\bS_0(P_K(\rho_V * \chi_{\tilde{D}_V^-})) - i_{\mu} m_G(\tilde{D}\backslash\tilde{D}_V^-) \leq \bS_0\chi_D \leq \bS_0(P_K(\rho_V * \chi_{\tilde{D}_V^+})) + i_{\mu} m_G(\tilde{D}^+_V\backslash\tilde{D})\, .
\end{align*}
Using the triangle inequality, we get that 
\begin{align*}
\norm{\bS_0 \chi_D}_{L^2(\mu)} \leq \norm{\bS_0(P_K(\rho_V * \chi_{\tilde{D}_V^-}))}_{L^2(\mu)} + \norm{\bS_0(P_K(\rho_V * \chi_{\tilde{D}_V^+}))}_{L^2(\mu)} + i_{\mu} m_G(\tilde{D}_V^+\backslash\tilde{D}_V^-)
\end{align*}
as desired.
\end{proof}

%%%%%%%%%%%%%%%%%%%%%%%%%%%%%%%%%%%%%%%%%%%%%%%%%%%

\subsection{Spherical integrability coefficients}
\label{Spherical integrability coefficients}

The main property that we will need for the proof of Theorem \ref{Theorem2} is the following notion of uniform $L^q$-integrability of the positive-definite spherical functions for $(G, K)$ for some $q \in [2, +\infty)$.

\begin{definition}[Spherical integrability coefficients]
\label{DefSphericalSpectralGap}
Let $(\pi, \cH)$ be a $K$-spherical unitary $G$-representation and $\alpha : \sA(G) \rightarrow \cH$ a linear continuous $G$-map as in Section \ref{Spectral measures}. If $q \in [2, +\infty]$ then the \emph{spherical $L^q$-coefficient} of $\alpha$ is 
\begin{align*}
I_q(\alpha) := \sup\Big\{ \norm{\omega}_{L^q(G)} : \omega \in \supp(\sigma_{\alpha}) \Big\} \in (0, +\infty] \, . 
\end{align*}
If $\mu$ is an invariant locally square-integrable random measure on $X = G/K$, then we write $I_q(\mu) := I_q(\alpha_0)$, where we recall $\alpha_0(\varphi) = \bS_0(P_K\varphi) \in L^2_0(\mu)$ for every $\varphi \in \Borelbndinfty(G)$.  
\end{definition}

\begin{remark}
\label{RemarkMatrixCoefficientBoundsForHigherRankSimpleLieGroups}
If $G$ is a higher rank connected simple matrix Lie group with finite center and $K < G$ maximal compact then there are explicit quantitative bounds on matrix coefficients of unitary $G$-representations with $K$-finite vectors, found for example in Borel and Wallach's book \cite{BorelWallachBook}. For a proof of the following we refer to \cite[Theorem 4.1]{GorodnikHigherOrderCorrelations}. In the case of $K$-spherical representations, the statement is that there is a constant $\varepsilon = \varepsilon(G) > 0$ such that
\begin{align*}
|\omega(g)| \leq C \e^{-\varepsilon d(g.o, o)} \, , \quad \forall \, \omega \in \cS^+ \backslash \{1\} \, , 
\end{align*}
where $d$ is the invariant reference metric on $X = G/K$ and $C > 0$ is independent of $\omega \in \cS^+ \backslash \{1\}$. In particular, since $X = G/K$ has at most exponential volume growth, meaning that there is a $\delta_X > 0$ such that 
\begin{align*}
\int_G \e^{-\delta_X d(g.o, o)} dm_G(g) < +\infty
\end{align*}
then taking $q \geq \delta_X/\varepsilon$ we get that
\begin{align*}
\int_G |\omega(g)|^q dm_G(g) \leq C^q \int_G \e^{- \delta_X d(g.o, o)} dm_G(g) < +\infty 
\end{align*}
for all $\omega \in \cS^+ \backslash \{1\}$. In particular, $I_q(\alpha) < +\infty$ for \emph{every} choice of $\alpha$ and hence $I_q(\mu) < +\infty$ for every invariant locally square-integrable random measure $\mu$. 
\end{remark}

\begin{lemma}
\label{LemmaFiniteLqCoefficientImpliesBoundOnConvolutionStatistic}
Let $\mu$ be an invariant locally square-integrable random measure on $X$ with finite spherical $L^q$-coefficient $I_q(\mu) < +\infty$ for some $q \in [2, +\infty)$. Then, for any Borel set $\tilde{B} \subset G$ and any $\rho \in \Borelbndinfty(G)$,
\begin{align*}
\norm{\bS_0(P_K(\rho * \chi_{\tilde{B}}))}_{L^2(\mu)} \leq m_G(\tilde{B})^{\frac{q - 1}{q}} I_q(\mu) \norm{\bS_0(P_K\rho)}_{L^2(\mu)} \, . 
\end{align*}
\end{lemma}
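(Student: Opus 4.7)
The plan is to exploit the $L^1(G)$-equivariance of $\alpha_0$, reduce to a bi-$K$-invariant function via averaging, apply the spectral identity of Theorem \ref{TheoremSpectralMeasureForAlgebraGMaps}, and bound $\hat{\chi}_{\tilde{B}}(\omega)$ by Hölder's inequality against the $L^q$-integrability of spherical functions in $\supp(\sigma_\mu)$.

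First, since $\alpha_0(\varphi) = \bS_0(P_K\varphi)$ depends on $\varphi$ only through its right-$K$-average, one may rewrite
\begin{align*}
\alpha_0(\rho * \chi_{\tilde{B}}) = \alpha_0\bigl((\rho * \chi_{\tilde{B}})^K\bigr) = \alpha_0(\rho * {}^K\chi_{\tilde{B}}),
\end{align*}
using the interchange identity $(\varphi * \psi)^K = \varphi * ({}^K\psi)$ obtained by commuting the $K$-integral with the convolution. After further left-$K$-averaging (corresponding to the orthogonal projection $\Proj_K$ onto $L^2_0(\mu)^K$, so that $\Proj_K \alpha_0(\varphi) = \alpha_0(\varphi^{\natural})$), one reduces to the bi-$K$-invariant function $(\rho * \chi_{\tilde{B}})^{\natural} = ({}^K\rho) * ({}^K\chi_{\tilde{B}})$. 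I would then apply Theorem \ref{TheoremSpectralMeasureForAlgebraGMaps} in conjunction with the convolution-to-product formula: the identity $\widehat{{}^K\varphi}(\omega) = \hat{\varphi}(\omega)$, verified by absorbing $k$ into $\omega$ via its bi-$K$-invariance, and an adaptation of Equation \eqref{EqSphericalTransformConvolutionToProductFormula} via the spherical functional equation yield
\begin{align*}
\widehat{(\rho * \chi_{\tilde{B}})^{\natural}}(\omega) = \hat{\rho}(\omega)\, \hat{\chi}_{\tilde{B}}(\omega).
\end{align*}

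Next, I would bound $|\hat{\chi}_{\tilde{B}}(\omega)|$ by Hölder's inequality with conjugate exponents $q/(q-1)$ and $q$. Using unimodularity of $G$ and $|\omega(g^{-1})| = |\omega(g)|$ (the latter from the unitary GNS realization of a positive-definite $\omega$),
\begin{align*}
|\hat{\chi}_{\tilde{B}}(\omega)| \leq \int_{\tilde{B}} |\omega(g)| \, dm_G(g) \leq m_G(\tilde{B})^{(q-1)/q} \|\omega\|_{L^q(G)},
\end{align*}
which for $\omega \in \supp(\sigma_\mu)$ is at most $m_G(\tilde{B})^{(q-1)/q} I_q(\mu)$ by definition of the spherical $L^q$-coefficient. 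Substituting into the spectral integral and invoking Theorem \ref{TheoremSpectralMeasureForAlgebraGMaps} once more on $\rho^{\natural}$ gives
\begin{align*}
\|\alpha_0((\rho * \chi_{\tilde{B}})^{\natural})\|^2 \leq m_G(\tilde{B})^{2(q-1)/q} I_q(\mu)^2 \int_{\cS^+} |\hat{\rho}(\omega)|^2 \, d\sigma_\mu(\omega) = m_G(\tilde{B})^{2(q-1)/q} I_q(\mu)^2 \|\alpha_0(\rho^{\natural})\|^2,
\end{align*}
and the contraction $\|\alpha_0(\rho^{\natural})\| = \|\Proj_K \alpha_0(\rho)\| \leq \|\alpha_0(\rho)\|$ closes the estimate after taking square roots.

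The main obstacle I anticipate is the passage from $\|\alpha_0(\rho * \chi_{\tilde{B}})\|$ to $\|\alpha_0((\rho * \chi_{\tilde{B}})^{\natural})\|$: if $\rho$ is not left-$K$-invariant, then $\alpha_0(\rho * \chi_{\tilde{B}})$ need not lie in $L^2_0(\mu)^K$, while the spectral measure $\sigma_\mu$ on $\cS^+$ directly controls only the $K$-invariant projection of this vector. I expect the actual proof either reduces at the outset to $\rho$ left-$K$-invariant — which is harmless for the applications in this section, where $\rho = \rho_V$ can be taken bi-$K$-invariant — or deploys a slightly finer spectral decomposition of $\pi_\mu$ in which the $K$-isotypic components orthogonal to $L^2_0(\mu)^K$ are controlled by the same uniform $L^q$-decay on matrix coefficients; in either case the Hölder-cum-spectral scheme above supplies the driving estimate.
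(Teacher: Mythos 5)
Your argument is essentially the paper's own proof: write $\norm{\bS_0(P_K(\rho * \chi_{\tilde{B}}))}_{L^2(\mu)}$ as $\norm{\hat{\rho}\,\hat{\chi}_{\tilde{B}}}_{L^2(\sigma_{\mu})}$ via the spectral measure, bound $\norm{\hat{\chi}_{\tilde{B}}}_{L^{\infty}(\sigma_{\mu})}$ by H\"older against the $L^q$-norm of spherical functions in $\supp(\sigma_{\mu})$, and conclude. The bi-$K$-invariance subtlety you flag at the end is real but is simply not addressed in the paper's proof, which invokes the spectral identity directly; your resolution (reduce to $\rho$ bi-$K$-invariant, which holds in the only application, where $\rho = \rho_{\cO_{\varepsilon}}$ and $\tilde{B}$ are bi-$K$-invariant) is the right one, so if anything your write-up is the more careful of the two.
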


\begin{proof}
By defintion of the diffraction measure $\sigma_{\mu}$ on $\cS^+$ we have 
\begin{align*}
\norm{\bS_0(P_K(\rho * \chi_{\tilde{B}}))}_{L^2(\mu)} = \norm{\hat{\rho} \cdot \hat{\chi}_{\tilde{B}}}_{L^2(\sigma_{\mu})} \, . 
\end{align*}
By Hölder's inequality, we can bound the $L^{\infty}(\sigma_{\mu})$-norm of $\hat{\chi}_{\tilde{B}}$ by
\begin{align*}
\norm{\hat{\chi}_{\tilde{B}}}_{L^{\infty}(\sigma_{\mu})} = \sup_{\omega \in \supp(\sigma_{\mu})} \Big| \int_{\tilde{B}} \omega(g^{-1}) dm_G(g) \Big| \leq m_G(\tilde{B})^{\frac{q-1}{q}} I_q(\mu) 
\end{align*}
to get the desired bound,
\begin{align*}
\norm{\bS_0(P_K(\tilde{\rho} * \chi_{\tilde{B}}))}_{L^2(\mu)} \leq  m_G(\tilde{B})^{\frac{q-1}{q}} I_q(\mu) \norm{\hat{\rho}}_{L^2(\sigma_{\mu})} = m_G(\tilde{B})^{\frac{q - 1}{q}} I_q(\mu) \norm{\bS_0(P_K\rho)}_{L^2(\mu)}\, . 
\end{align*}
\end{proof}

\subsection{Admissible sequences}
\label{Admissible sequences}

To proceed, we will consider $V = \cO_{\varepsilon}$, where
\begin{align*}
\cO_{\varepsilon} = \Big\{ g \in G : d(g.o, o) < \varepsilon \Big\} \, . 
\end{align*}
We will also consider ''nice'' sequences Borel sets that behave well with respect to small perturbations by elements of $G$. The following definition is from \cite[Definition 1.1]{GorodnikNevoBook}.

\begin{definition}
An unbounded increasing sequence $(\tilde{D}_t)_{t > 0}$ of Borel subsets in $G$ is \emph{admissible} if there are constants $c, t_o, \varepsilon_o > 0$ such that for every $t > t_o$ and every $0 < \varepsilon < \varepsilon_o$,
\begin{align*}
\cO_{\varepsilon}\tilde{D}_t \cO_{\varepsilon} \subset \tilde{D}_{t + c\varepsilon} \quad \mbox{ and } \quad m_G(\tilde{D}_{t + \varepsilon}) \leq (1 + c\varepsilon) m_G(\tilde{D}_t) \, . 
\end{align*}
\end{definition}

\begin{remark}
\label{RemarkLiftedMetricBallsFormAnAdmissibleSequence}
In \cite[Prop. 3.15]{GorodnikNevoBook} it is shown for Riemannian symmetric spaces (and more generally a large class of symmetric spaces coming from $S$-algebraic groups) that lifted $K$-invariant metric balls 
$$\tilde{B}_r = \{ g \in G : g.o \in B_r(x_o) \}$$
form an admissible sequence indexed by the radius $r  > 0$. 
\end{remark}

One verifies from the definition of admissibility that
\begin{align*}
\tilde{D}_{t - c\varepsilon} \subset (\tilde{D}_t)_{\cO_{\varepsilon}}^- \subset \tilde{D}_t \subset (\tilde{D}_t)_{\cO_{\varepsilon}}^+ \subset \tilde{D}_{t + c\varepsilon} \, , 
\end{align*}
and in the proof of following Lemma, we fix
\begin{align*}
\rho_{\cO_{\varepsilon}} = \frac{\chi_{\cO_{\varepsilon}}}{m_G(\cO_{\varepsilon})} \, . 
\end{align*}

\begin{lemma}
\label{LemmaVarianceUpperBoundForAdmissibleSequences}
Let $(\tilde{D}_t)_{t > 0}$ be an admissible sequence of Borel sets in $G$ and set $D_t = \tilde{D}_t.o \subset X$. Suppose that $\mu$ is an invariant locally square-integrable random measure on $X$ with finite spherical $L^q$-coefficient $I_q(\mu) < +\infty$ for some $q \in [2, +\infty)$. Then there is a constant $C = C(\mu, q) > 0$ such that for sufficiently large $t > 0$ and sufficiently small $\varepsilon > 0$,
\begin{align*}
\norm{\bS_0\chi_{D_t}}_{L^2(\mu)} \leq C \Big( \frac{m_G(\tilde{D}_t)^{\frac{q-1}{q}}}{m_G(\cO_{\varepsilon})} + \varepsilon m_G(\tilde{D}_t) \Big) \, . 
\end{align*}
\end{lemma}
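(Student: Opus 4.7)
The plan is to assemble the three ingredients prepared in this subsection: Corollary \ref{CorollaryTriangleInequalityUpperBoundOnL2Discrepancy}, Lemma \ref{LemmaFiniteLqCoefficientImpliesBoundOnConvolutionStatistic}, and the admissibility hypothesis. Specializing Corollary \ref{CorollaryTriangleInequalityUpperBoundOnL2Discrepancy} with $V = \cO_\varepsilon$ and $\tilde{D} = \tilde{D}_t$ decomposes $\norm{\bS_0\chi_{D_t}}_{L^2(\mu)}$ into two convolution--discrepancy terms plus the boundary term $i_\mu\, m_G((\tilde{D}_t)^+_{\cO_\varepsilon} \setminus (\tilde{D}_t)^-_{\cO_\varepsilon})$, and I will control each separately, absorbing all geometric factors into a constant $C = C(\mu, q)$.

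For the convolution terms, I would combine the admissibility inclusions $(\tilde{D}_t)^{\pm}_{\cO_\varepsilon} \subset \tilde{D}_{t \pm c\varepsilon}$ with Lemma \ref{LemmaFiniteLqCoefficientImpliesBoundOnConvolutionStatistic} applied to $\rho = \rho_{\cO_\varepsilon} = \chi_{\cO_\varepsilon}/m_G(\cO_\varepsilon)$ and $\tilde{B} = (\tilde{D}_t)^{\pm}_{\cO_\varepsilon}$, producing an upper bound
\begin{align*}
\norm{\bS_0(P_K(\rho_{\cO_\varepsilon} * \chi_{(\tilde{D}_t)^{\pm}_{\cO_\varepsilon}}))}_{L^2(\mu)} \leq m_G(\tilde{D}_{t \pm c\varepsilon})^{(q-1)/q}\, I_q(\mu)\, \norm{\bS_0(P_K\rho_{\cO_\varepsilon})}_{L^2(\mu)}.
\end{align*}
The key auxiliary step is to bound $\norm{\bS_0(P_K\rho_{\cO_\varepsilon})}_{L^2(\mu)}$ by a constant depending only on $\mu$ divided by $m_G(\cO_\varepsilon)$. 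Since $\cO_\varepsilon$ is bi-$K$-invariant one has $P_K\chi_{\cO_\varepsilon} = \chi_{B_\varepsilon(x_o)}$, and a direct computation from the definition of $m_X$ gives $m_X(B_\varepsilon(x_o)) = m_G(\cO_\varepsilon)$, so
\begin{align*}
\bS_0(P_K\rho_{\cO_\varepsilon}) = \frac{1}{m_G(\cO_\varepsilon)}\,\bS_0\chi_{B_\varepsilon(x_o)}.
\end{align*}
The numerator $\norm{\bS_0\chi_{B_\varepsilon(x_o)}}_{L^2(\mu)}$ is uniformly bounded for $\varepsilon \in (0, \varepsilon_o)$ by monotonicity of $\bS\chi_{B_\varepsilon(x_o)}$ in $\varepsilon$ combined with the local square-integrability of $\mu$ applied to $\chi_{B_{\varepsilon_o}(x_o)}$, yielding a constant $C_0(\mu) > 0$ with $\norm{\bS_0(P_K\rho_{\cO_\varepsilon})}_{L^2(\mu)} \leq C_0(\mu)/m_G(\cO_\varepsilon)$.

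For the remaining geometric factors, the second admissibility estimate iterated at scale $c\varepsilon$ yields $m_G(\tilde{D}_{t + c\varepsilon}) \leq (1 + c^2\varepsilon)\,m_G(\tilde{D}_t) \leq 2\, m_G(\tilde{D}_t)$ for $\varepsilon$ small enough, so the two convolution terms together contribute $O\big(m_G(\tilde{D}_t)^{(q-1)/q}/m_G(\cO_\varepsilon)\big)$. Iterating the same estimate from above and below produces
\begin{align*}
m_G((\tilde{D}_t)^+_{\cO_\varepsilon} \setminus (\tilde{D}_t)^-_{\cO_\varepsilon}) \leq m_G(\tilde{D}_{t + c\varepsilon}) - m_G(\tilde{D}_{t - c\varepsilon}) \leq C_1\,\varepsilon\, m_G(\tilde{D}_t)
\end{align*}
for a constant $C_1$ depending only on $c$, giving the boundary contribution. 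Assembling the three bounds yields the claimed inequality. There is no conceptual obstacle here; the argument is essentially bookkeeping, the only mildly delicate point being the uniform control of $\norm{\bS_0\chi_{B_\varepsilon(x_o)}}_{L^2(\mu)}$ near $\varepsilon = 0$, which crucially uses local square-integrability of $\mu$ together with the monotone inclusion $B_\varepsilon(x_o) \subset B_{\varepsilon_o}(x_o)$.
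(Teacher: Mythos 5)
Your proposal is correct and follows essentially the same route as the paper: the decomposition via Corollary \ref{CorollaryTriangleInequalityUpperBoundOnL2Discrepancy}, the application of Lemma \ref{LemmaFiniteLqCoefficientImpliesBoundOnConvolutionStatistic} with $\rho_{\cO_\varepsilon}$, the admissibility estimates on $m_G((\tilde{D}_t)^{\pm}_{\cO_\varepsilon})$ and on the symmetric difference, and the bound $\norm{\bS_0(P_K\rho_{\cO_\varepsilon})}_{L^2(\mu)} \lesssim_{\mu} m_G(\cO_\varepsilon)^{-1}$ via positivity/monotonicity plus local square-integrability. The paper phrases that last step by comparing $\rho_{\cO_\varepsilon}$ with $m_G(\cO_\varepsilon)^{-1}\chi_{\cO_1}$ and adding $i_\mu$ by the triangle inequality rather than via $P_K\chi_{\cO_\varepsilon} = \chi_{B_\varepsilon(x_o)}$, but this is only a cosmetic difference.
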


\begin{proof}
Let $t > t_o$. By Corollary \ref{CorollaryTriangleInequalityUpperBoundOnL2Discrepancy} and Lemma \ref{LemmaFiniteLqCoefficientImpliesBoundOnConvolutionStatistic} we have that 
\begin{align*}
\norm{\bS_0\chi_{D_t}}_{L^2(\mu)} \leq \,\,  &m_G((\tilde{D}_t)_{\cO_{\varepsilon}}^-)^{\frac{q - 1}{q}} I_q(\mu) \norm{\bS_0(P_K\rho_{\cO_{\varepsilon
}})}_{L^2(\mu)} \\
&+  m_G((\tilde{D}_t)_{\cO_{\varepsilon}}^+)^{\frac{q - 1}{q}} I_q(\mu) \norm{\bS_0(P_K\rho_{\cO_{\varepsilon}})}_{L^2(\mu)} \\
&+ i_{\mu} m_G((\tilde{D}_t)_{\cO_{\varepsilon}}^+ \backslash (\tilde{D}_t)_{\cO_{\varepsilon}}^-) \, . 
\end{align*}
Using that the sequence $\tilde{D}_t$ is admissible, we have that
\begin{align*}
&m_G((\tilde{D}_t)_{\cO_{\varepsilon}}^-) \leq m_G(\tilde{D}_t), \\
&m_G((\tilde{D}_t)_{\cO_{\varepsilon}}^+) \leq m_G(\tilde{D}_{t + c\varepsilon}) \leq (1 + c^2\varepsilon)m_G(\tilde{D}_t), \quad \mbox{ and } \\
&m_G((\tilde{D}_t)_{\cO_{\varepsilon}}^+ \backslash (\tilde{D}_t)_{\cO_{\varepsilon}}^-) \leq \Big( (1 + c^2\varepsilon) - \frac{1}{1 + c^2\varepsilon} \Big) m_G(\tilde{D}_t) \leq c^2(c^2 + 2)\varepsilon m_G(\tilde{D}_t) \, . 
\end{align*}
Moreover, using that we specified $\rho_{\cO_{\varepsilon}} = m_G(\cO_{\varepsilon})^{-1} \chi_{\cO_{\varepsilon}}$ along with the triangle inequality yields
\begin{align*}
\norm{\bS_0(P_K\rho_{\cO_{\varepsilon}})}_{L^2(\mu)} \leq \norm{\bS(P_K\rho_{\cO_{\varepsilon}})}_{L^2(\mu)} + i_{\mu} \leq \frac{\norm{\bS(P_K\chi_{\cO_{1}})}_{L^2(\mu)}}{m_G(\cO_{\varepsilon})} + i_{\mu} \leq 2 \frac{\norm{\bS(P_K\chi_{\cO_{1}})}_{L^2(\mu)}}{m_G(\cO_{\varepsilon})}
\end{align*}
for sufficiently small $\varepsilon > 0$. Gathering all of these estimates we get 
\begin{align*}
\norm{\bS_0\chi_{D_t}}_{L^2(\mu)} \leq 2 (2 + c^2\varepsilon) I_q(\mu) \norm{\bS(P_K\chi_{\cO_{1}})}_{L^2(\mu)} \frac{m_G(\tilde{D}_t)^{\frac{q-1}{q}}}{m_G(\cO_{\varepsilon})} + c^2(c^2 + 2)\varepsilon m_G(\tilde{D}_t) \, . 
\end{align*}
Taking 
\begin{align*}
C(\mu, q) = \max\Big\{ 2 (c^2 + 2) I_q(\mu) \norm{\bS(P_K\chi_{\cO_{1}})}_{L^2(\mu)}, c^2(c^2 + 2)\Big\}
\end{align*}
finishes the proof.
\end{proof}

\subsection{Proof of Theorem \ref{Theorem2}}
\label{Proof of Theorem 1.3}

Assume that $G$ is a connected simple matrix Lie group with finite center of real rank greater or equal to $2$ and let $K < G$ be a maximal compact subgroup. From finite upper local dimension in Assumption 2 of Section \ref{Random measures on commutative spaces}, we have that
\begin{align*}
m_G(\cO_{\varepsilon}) = m_X(B_{\varepsilon}(x_o)) \geq m_o\varepsilon^{d}   
\end{align*}
as $\varepsilon \rightarrow 0^+$ for some constants $m_o, d > 0$. By Remark \ref{RemarkMatrixCoefficientBoundsForHigherRankSimpleLieGroups} there is a $q \in [2, +\infty)$ such that $I_q(\mu) < +\infty$ for any invariant locally square-integrable random measure $\mu$. Moreover, since lifted metric balls $(\tilde{B}_r)_{r > 0}$ form an admissible sequence in $G$, then Lemma \ref{LemmaVarianceUpperBoundForAdmissibleSequences} tells us that 
\begin{align*}
\frac{\NVmu(r)}{m_X(B_r(x_o))^{2 - \delta}} = \frac{\norm{\bS_0\chi_{B_r(x_o)}}_{L^2(\mu)}^2}{m_X(B_r(x_o))^{2 - \delta}} \leq C(\mu, q)^2 \Big( m_o^{-1} \varepsilon^{-d} m_G(\tilde{B}_r)^{- \frac{1}{q} + \frac{\delta}{2}} + \varepsilon m_G(\tilde{B}_r)^{\frac{\delta}{2}}\Big)^2 
\end{align*}
for sufficiently large $r > 0$ and sufficiently small $\varepsilon > 0$. In addition to this, we take $r$ large enough so that the choice 
\begin{align*}
\varepsilon = m_G(\tilde{B}_r)^{-\delta'}
\end{align*}
remains valid for some fixed $\delta' > 0$ that is yet to be determined. This yields the upper bound
\begin{align*}
\frac{\NVmu(r)}{m_X(B_r(x_o))^{2 - \delta}} \leq C(\mu, q)^2 \Big( m_o^{-1}m_G(\tilde{B}_r)^{d \delta' - \frac{1}{q} + \frac{\delta}{2}} + m_G(\tilde{B}_r)^{\frac{\delta}{2} - \delta'}\Big)^2 \, . 
\end{align*}
Finally, taking $\delta', \delta > 0$ such that $\delta < 2\min\{\tfrac{1}{q} - d\delta', \delta'\}$, which can be done for $0 < \delta' < \tfrac{1}{dq}$, implies that 
\begin{align*}
\limsup_{r \rightarrow +\infty} \frac{\NVmu(r)}{m_X(B_r(x_o))^{2 - \delta}} = 0 \, . 
\end{align*}

\section{Determinantal point processes}
\label{Determinantal point processes}

The equivariant kernel determinantal point processes (DPPs) of interest are introduced in the first Subsection and we compute variances of linear statistics, followed by a formula for the Bartlett spectral measure in Subsection \ref{The Bartlett spectral measure of equivariant kernel DPPs}. The general framework for DPPs with kernels coming from unitary representations of covering groups is given in Subsection \ref{DPPs from unitary representations of covering groups} together with an application to infinite polyanalytic ensembles of pure type. In the last Subsection we compute the Bartlett spectral measure of the zero set for the standard hyperbolic Gaussian analytic function in the unit disk.

\subsection{Variances of linear statistics for equivariant kernel DPPs}
\label{Variances of linear statistics for equivariant kernel DPPs}

Let $L : X \times X \rightarrow \C$ be a continuous Hermitian positive-definite kernel, meaning $L(x_1, x_2) = \overline{L(x_2, x_1)}$ for all $x_1, x_2 \in X$ and 
\begin{align*}
\int_X \int_X f(x_1) \overline{f(x_2)} L(x_1, x_2) dm_X(x_1) dm_X(x_2) \geq 0 
\end{align*}
for every $f \in \Borelbndinfty(X)$. Let $c : G \times X \rightarrow \C$ be a measurable map such that $|c(g, x)| = 1$ for all $g \in G$, $x \in X$. We consider kernels $L$ as described that satisfy the equivariance property
\begin{align}
\label{EqEquivariantKernel}
L(g.x_1, g.x_2) = c(g, x_1) \overline{c(g, x_2)} L(x_1, x_2)
\end{align}
for all $g \in G$ and all $x_1, x_2 \in X$. In particular, $L(x, x) = L(x_o, x_o)$ for all $x \in X$. Such a kernel $L$ then defines a locally trace class operator 
\begin{align*}
Lf(x_1) = \int_X f(x_2) L(x_1, x_2) dm_X(x_2) \, , \quad f \in \Borelbndinfty(X)
\end{align*}
in the sense that 
\begin{align*}
\mathrm{Tr}(\chi_B L \chi_B) = \int_B L(x, x) dm_X(x) = L(x_o, x_o) m_X(B) < +\infty  
\end{align*}
for all bounded Borel sets $B \subset X$, see \cite[Lemma, p.65]{ReedSimonScatteringTheoryIII}. By \cite[Theorem 5.2.17]{baccelli:hal-02460214} there is a determinantal point process $\mu_L$ on $X$, uniquely determined by its $n$-point correlation densities 
\begin{align*}
\bE_{\mu_L}\Big( \sum_{p_1 \neq ... \neq p_n} f_1(p_1) ... f_n(p_n) \Big) = \int_{X^n} f_1(x_1) ... f_n(x_n) \det(L(x_i, x_j))_{i, j = 1}^n dm_X^{\otimes n}(x_1, ..., x_n)
\end{align*} 
for all $f_1, ..., f_n \in \Borelbndinfty(X)$. In particular, the intensity of $\mu_L$ is $L(x_o, x_o)$ since
\begin{align*}
\bE_{\mu_L}(\bS f) = \int_X f(x) L(x, x) dm_X(x) = L(x_o, x_o) \int_X f(x) dm_X(x) 
\end{align*}
for all $f_1, f_2 \in \Borelbndinfty(X)$, and the $2$-point correlation density is 
\begin{align*}
\bE_{\mu_L}\Big( \sum_{p_1 \neq p_2} &f_1(p_1) \overline{f_2(p_2)} \Big) = \int_{X^2} f_1(x_1) \overline{f_2(x_2)} (L(x_o, x_o)^2 - |L(x_1, x_2)|^2) dm_X^{\otimes 2}(x_1, x_2) \\
&= \bE_{\mu_L}(\bS f_1) \overline{\bE_{\mu_L}(\bS f_2)}  - \int_{X^2} f_1(x_1) \overline{f_2(x_2)} |L(x_1, x_2)|^2 dm_X^{\otimes 2}(x_1, x_2)  \, . 
\end{align*}
With this in mind, we can compute the variance of a linear statistic with respect to $\mu_L$ by
\begin{equation}
\begin{split}
\label{EqVarianceOfEquivariantDPP}
\Var_{\mu_L}(\bS f) &=  \bE_{\mu_L}\Big( \sum_{p} |f(p)|^2 \Big) + \bE_{\mu_L}\Big( \sum_{p_1 \neq p_2} f(p_1) \overline{f(p_2)} \Big) - |\bE_{\mu_L}(\bS f)|^2 \\
&= L(x_o, x_o) \int_X |f(x)|^2 dm_X(x) - \int_{X^2} f(x_1) \overline{f(x_2)} |L(x_1, x_2)|^2 dm_X^{\otimes 2}(x_1, x_2) \, . 
\end{split}
\end{equation}

\subsection{The Bartlett spectral measure of equivariant kernel DPPs}
\label{The Bartlett spectral measure of equivariant kernel DPPs}

Let $L : X \times X \rightarrow \C$ be a kernel as in the previous Subsection and $\mu_L$ the associated determinantal point process. 

\begin{lemma}
\label{LemmaVarianceDenistyOfDPPs}
The function $\kappa_L(g) := |L(x_o, g.x_o)|^2$ is continuous and bi-$K$-invariant on $G$. Moreover, if $f \in \Borelbndinfty(X)$ then the $\mu_L$-variance of the linear statistic $\bS f$ can be written as 
\begin{align*}
\Var_{\mu_L}(\bS f) = L(x_o, x_o)(\check{\varphi}_f * \overline{\varphi}_f)(e) - \int_G (\check{\varphi}_f * \overline{\varphi}_f)(g) \kappa_L(g) dm_G(g) \, . 
\end{align*}
\end{lemma}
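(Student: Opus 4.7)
The plan is to verify the properties of $\kappa_L$ directly from the hypotheses on $L$, and then transport the variance formula in Equation \ref{EqVarianceOfEquivariantDPP} from $X$ to $G$ via the defining relation $m_X(B) = \int_G \chi_B(g.x_o) dm_G(g)$, using the equivariance in Equation \ref{EqEquivariantKernel} to collapse $|L(g_1.x_o, g_2.x_o)|^2$ to a function of $g_1^{-1}g_2$ so that the double integral becomes a convolution paired with $\kappa_L$.

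Continuity of $\kappa_L$ follows from continuity of $L$ and the orbit map $g \mapsto g.x_o$. Right $K$-invariance is immediate from $k.x_o = x_o$ for $k \in K$. For left $K$-invariance, I rewrite $L(x_o, kg.x_o) = L(k.x_o, kg.x_o)$ and apply Equation \ref{EqEquivariantKernel} with group element $k$; the resulting prefactor $c(k, x_o)\overline{c(k, g.x_o)}$ has unit modulus, so $\kappa_L(kg) = \kappa_L(g)$.

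Turning to the variance, the first term of Equation \ref{EqVarianceOfEquivariantDPP} lifts to $L(x_o, x_o) \int_G |\varphi_f(g)|^2 dm_G(g)$, and a relabeling $g \mapsto g^{-1}$ in the integrand (valid by unimodularity of $G$) identifies this with $L(x_o, x_o)(\check{\varphi}_f * \overline{\varphi}_f)(e)$, which is the first term in the statement. For the second term, lifting to $G \times G$ gives the double integral
$$\int_{G^2} \varphi_f(g_1) \overline{\varphi_f(g_2)} |L(g_1.x_o, g_2.x_o)|^2 dm_G(g_1) dm_G(g_2).$$
Applying Equation \ref{EqEquivariantKernel} with group element $g_1^{-1}$ and using $|c| \equiv 1$ collapses the kernel to $|L(g_1.x_o, g_2.x_o)|^2 = \kappa_L(g_1^{-1}g_2)$. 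The substitution $g_2 = g_1 h$ followed by Fubini, combined with the convolution identity $(\check{\varphi}_f * \overline{\varphi}_f)(h) = \int_G \varphi_f(g_1) \overline{\varphi_f(g_1 h)} dm_G(g_1)$ (itself obtained by unwinding the definition and using unimodularity), converts the inner integral over $g_1$ into $(\check{\varphi}_f * \overline{\varphi}_f)(h)$, yielding $\int_G (\check{\varphi}_f * \overline{\varphi}_f)(h) \kappa_L(h) dm_G(h)$ as required.

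The only conceptual step is the equivariance computation exhibiting $|L(g_1.x_o, g_2.x_o)|^2$ as a bi-$K$-invariant function of $g_1^{-1}g_2$; the remainder is bookkeeping, and Fubini causes no difficulty since $f$ is bounded with compact support and $L$ is continuous, so all integrands are absolutely integrable on the relevant domains.
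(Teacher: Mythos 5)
Your proposal is correct and follows essentially the same route as the paper: both establish $|L(g_1.x_o,g_2.x_o)|^2=\kappa_L(g_1^{-1}g_2)$ via the unimodular equivariance factor (which simultaneously gives bi-$K$-invariance), lift the variance formula to $G$ through the definition of $m_X$, and convert the double integral into the convolution $\check{\varphi}_f * \overline{\varphi}_f$ by the substitution $g_2 = g_1 h$ and unimodularity. The minor differences (applying equivariance with $g_1^{-1}$ versus $g_1$, and checking the two $K$-invariances separately) are cosmetic.
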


\begin{proof}
Since the kernel $L$ is assumed to be continuous it is clear that $\kappa_L$ is continuous. To show that $\kappa_L$ is bi-$K$-invariant, we first note that for every $g_1, g_2 \in G$,
\begin{align*}
L(g_1.x_o, g_2.x_o) = L(g_1.x_o, g_1(g_1^{-1} g_2).x_o) = c(g_1, x_o) \overline{c(g_1, g_1^{-1}g_2.x_o)} L(x_o, g_1^{-1}g_2.x_o) \, . 
\end{align*}
Since $|c(g, x)| = 1$ for every $g \in G$ and every $x \in X$ by assumption, we find that $|L(g_1.x_o, g_2.x_o)|^2 = \kappa_L(g_1^{-1}g_2)$. Thus for every $k_1, k_2 \in K$ and every $g \in G$,
\begin{align*}
\kappa_L(k_1 g k_2) = |L(k_1^{-1}.x_o, gk_2.x_o)|^2 = |L(x_o, g.x_o)|^2 = \kappa_L(g) \, . 
\end{align*}
For the second part we use the definition of the $G$-invariant measure $m_X$ on $X$,
\begin{align*}
m_X(f) = \int_G \varphi_f(g) dm_G(g)
\end{align*}
and Equation \ref{EqVarianceOfEquivariantDPP} together with $|L(g_1.x_o, g_2.x_o)|^2 = \kappa_L(g_1^{-1}g_2)$ to write
\begin{align*}
\Var_{\mu_L}(\bS f) &= L(x_o, x_o) \int_G |\varphi_f(g)|^2 dm_G(g) - \int_{G^2} \varphi_f(g_1) \overline{\varphi_f(g_2)} \kappa_L(g_1^{-1}g_2) dm_G^{\otimes 2}(g_1, g_2) \\
&= L(x_o, x_o) \int_G |\varphi_f(g)|^2 dm_G(g) - \int_{G^2} \varphi_f(g_1) \overline{\varphi_f(g_1g_2)} \kappa_L(g_2) dm_G^{\otimes 2}(g_1, g_2) \, . 
\end{align*}
Finally, observing that
\begin{align*}
\int_G \varphi_f(g_1) \overline{\varphi_f(g_1g_2)} dm_G(g_1) = \int_G \check{\varphi}_f(g_1) \overline{\varphi_f(g_1^{-1}g_2)} dm_G(g_1) = (\check{\varphi}_f * \overline{\varphi}_f)(g_2)
\end{align*}
for every $g_2 \in G$ yields the desired formula. 
\end{proof}

If $\kappa_L(g) = |L(x_o, g.x_o)|^2$ is in addition square-integrable with respect to the Haar measure $m_G$, then by the Plancherel formula in Remark \ref{RemarkPlancherelMeasure} we have the following explicit formula for the diffraction measure $\sigma_{\mu_L}$ of the DPP $\mu_L$.

\begin{corollary}[Diffraction measure of an equivariant kernel DPP]
\label{CorollaryDiffractionMeasureofEquivariantKernalDPP}
If the kernel $L : X \times X \rightarrow \C$ satisfies 
\begin{align*}
\int_X |L(x_o, x)|^4 dm_X(x) < + \infty
\end{align*}
then   
\begin{align*}
\Var_{\mu_L}(\bS f) = \int_{\cS^+} |\hat{\varphi}_f(\omega)|^2 (L(x_o, x_o) - \hat{\kappa}_L(\omega)) d\sigma_{\cP}(\omega)
\end{align*}
for all $f \in \Borelbndinfty(X)^K$, where $\sigma_{\cP}$ is the spherical Plancherel measure for the pair $(G, K)$ from Remark \ref{RemarkPlancherelMeasure}.
\end{corollary}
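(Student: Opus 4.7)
The plan is to start from the formula established in Lemma \ref{LemmaVarianceDenistyOfDPPs} and convert each of the two terms into spectral integrals against the Plancherel measure $\sigma_{\cP}$ from Remark \ref{RemarkPlancherelMeasure}, using that $f \in \Borelbndinfty(X)^K$ makes $\varphi_f$ a compactly supported bi-$K$-invariant function, hence in $L^2(G, K)$, and that the integrability hypothesis places $\kappa_L$ in $L^2(G, K)$ as well.

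For the first term, a direct substitution $g \mapsto g^{-1}$ in the convolution gives $(\check{\varphi}_f * \overline{\varphi}_f)(e) = \int_G |\varphi_f(g)|^2 \, dm_G(g) = \norm{\varphi_f}_{L^2(G)}^2$, and Plancherel turns this into $\int_{\cS^+} |\hat{\varphi}_f(\omega)|^2 \, d\sigma_{\cP}(\omega)$. For the second term, one first verifies that $\kappa_L = |L(x_o, g.x_o)|^2$ is bi-$K$-invariant (by Lemma \ref{LemmaVarianceDenistyOfDPPs}), is real-valued, and satisfies $\kappa_L(g^{-1}) = \kappa_L(g)$ via the Hermitian symmetry of $L$ combined with $|c| \equiv 1$ in Equation \ref{EqIntroEquivariantKernel}. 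The hypothesis $\int_X |L(x_o, x)|^4 \, dm_X(x) < \infty$ unfolds to $\norm{\kappa_L}_{L^2(G)}^2 < \infty$.

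One then rewrites $\int_G (\check{\varphi}_f * \overline{\varphi}_f)(g) \kappa_L(g) \, dm_G(g)$ as the $L^2(G)$-inner product $\langle \check{\varphi}_f * \overline{\varphi}_f, \kappa_L \rangle$, which is legitimate because $\kappa_L$ is real, and applies the polarized Plancherel formula together with the spherical convolution identity from Equation \ref{EqSphericalTransformConvolutionToProductFormula} taken with $\psi = \overline{\varphi}_f$ (so $\psi^* = \check{\varphi}_f$), giving $\widehat{\check{\varphi}_f * \overline{\varphi}_f}(\omega) = |\hat{\overline{\varphi}}_f(\omega)|^2$. Using the positive-definite identity $\overline{\omega(g)} = \omega(g^{-1})$ one checks both that $\hat{\kappa}_L$ is real-valued, so $\overline{\hat{\kappa}_L} = \hat{\kappa}_L$, and that $|\hat{\overline{\varphi}}_f(\omega)|^2 = |\hat{\varphi}_f(\check{\omega})|^2$. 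A change of variables $\omega \mapsto \check{\omega}$, under which both $\sigma_{\cP}$ and $\hat{\kappa}_L$ are invariant (the latter from the symmetry $\kappa_L(g^{-1}) = \kappa_L(g)$), converts the integral into $\int_{\cS^+} |\hat{\varphi}_f(\omega)|^2 \hat{\kappa}_L(\omega) \, d\sigma_{\cP}(\omega)$. Subtracting the two expressions yields the desired formula.

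The main technical obstacle is the careful bookkeeping of inversions and complex conjugates in the spherical transform, particularly the reconciliation $|\hat{\overline{\varphi}}_f(\omega)|^2 = |\hat{\varphi}_f(\omega)|^2$ up to the involution $\omega \mapsto \check{\omega}$; all other steps are essentially mechanical applications of Plancherel. Alternatively, one can sidestep these symmetry considerations by appealing to the uniqueness statement in Theorem \ref{Theorem1}: since the candidate measure $(L(x_o, x_o) - \hat{\kappa}_L) \, d\sigma_{\cP}$ is a positive Radon measure on $\cS^+$ (positivity following from $\Var_{\mu_L}(\bS f) \geq 0$), it suffices to match the variance on a sufficiently rich class of $K$-invariant test functions, which is guaranteed by the computation above.
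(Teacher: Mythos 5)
Your proposal is correct and takes essentially the same route as the paper: the paper obtains this corollary directly from the variance formula of Lemma \ref{LemmaVarianceDenistyOfDPPs} by invoking the spherical Plancherel formula of Remark \ref{RemarkPlancherelMeasure}, exactly as you do. Your bookkeeping of the involutions (writing $\check{\varphi}_f * \overline{\varphi}_f$ as $\psi^* * \psi$ with $\psi = \overline{\varphi}_f$, the symmetry $\kappa_L(g^{-1}) = \kappa_L(g)$, and the invariance of $\sigma_{\cP}$ and $\hat{\kappa}_L$ under $\omega \mapsto \overline{\omega}$) just makes explicit the details the paper leaves implicit.
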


\subsection{DPPs from unitary representations of covering groups}
\label{DPPs from unitary representations of covering groups}

Let $\tilde{G}$ be a locally compact second countable group and suppose that there is a continuous surjective group homomorphism $a : \tilde{G} \rightarrow G$. Then the continuous transitive action of $G$ on $X$ lifts to a continuous transitive action of $\tilde{G}$ on $X$ by defining $\tilde{g}.x = a(\tilde{g}).x$ for all $\tilde{g} \in \tilde{G}$ and all $x \in X$, and we denote by $\tilde{K} = \Stab_{\tilde{G}}(x_o)$ the corresponding stabilizer, which possibly is a non-compact subgroup. We will also let $b : G \rightarrow \tilde{G}$ denote a measurable section of $a$ such that $b(Q) \subset \tilde{G}$ is pre-compact for every pre-compact $Q \subset G$ as in \cite[Lemma 1.1]{MackeyInducedRepsI}. Throughout this Subsection we will make the following assumption:

\noindent\textbf{Assumption 3}: The sections $s : X \rightarrow G$ and $b : G \rightarrow \tilde{G}$ are continuous. 

With this assumption we get a continuous section $\tilde{s} := b \circ s : X \rightarrow \tilde{G}$ such that $\tilde{s}(x_o) \in \tilde{K}$, which yields a natural continuous $\tilde{K}$-valued cocycle by the following Lemma.

\begin{lemma}
\label{LemmaKtildeValuedCocycleFromSection}
The map $\tilde{\gamma}_s : \tilde{G} \times X \rightarrow \tilde{K}$ given by $\tilde{\gamma}_s(g, x) = \tilde{s}(\tilde{g}.x)^{-1}\tilde{g}\tilde{s}(x)$ is a well-defined continuous map satisfying the cocycle identity
\begin{align*}
\tilde{\gamma}_s(\tilde{g}_1\tilde{g}_2, x) = \tilde{\gamma}_s(\tilde{g}_1, \tilde{g}_2.x)\tilde{\gamma}_s(\tilde{g}_2, x)
\end{align*}
for all $\tilde{g}_1, \tilde{g}_2 \in \tilde{G}$ and all $x \in X$.
\end{lemma}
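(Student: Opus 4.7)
The plan is straightforward: unpack the definition, check the three claims (well-definedness as a $\tilde{K}$-valued map, continuity, cocycle identity) one by one. None of these should present a real obstacle; the only nontrivial ingredient is invoking Assumption 3 to know that $\tilde{s} = b \circ s$ is continuous, and the fact that $\tilde{s}(x).x_o = x$ by construction of $s$ and the definition of the $\tilde{G}$-action via $\tilde{g}.x = a(\tilde{g}).x$.

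First I would verify that $\tilde{\gamma}_s(\tilde{g},x) \in \tilde{K}$. Applying the element to $x_o$ and using that $\tilde{s}(y).x_o = y$ for all $y \in X$, one computes
\begin{align*}
\tilde{\gamma}_s(\tilde{g},x).x_o = \tilde{s}(\tilde{g}.x)^{-1}\tilde{g}\tilde{s}(x).x_o = \tilde{s}(\tilde{g}.x)^{-1}.(\tilde{g}.x) = x_o,
\end{align*}
so $\tilde{\gamma}_s(\tilde{g},x) \in \Stab_{\tilde{G}}(x_o) = \tilde{K}$, proving well-definedness.

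Next, continuity: by Assumption 3 the section $\tilde{s}$ is continuous, the group operations on $\tilde{G}$ are continuous, inversion is continuous, and the action map $\tilde{G} \times X \to X$ is continuous (since it factors through the continuous action of $G$ via the continuous homomorphism $a$). Hence $\tilde{\gamma}_s$ is continuous as a composition and product of continuous maps.

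Finally, the cocycle identity is a direct manipulation: inserting $\tilde{s}(\tilde{g}_2.x)\tilde{s}(\tilde{g}_2.x)^{-1}$ in the middle,
\begin{align*}
\tilde{\gamma}_s(\tilde{g}_1\tilde{g}_2, x) &= \tilde{s}(\tilde{g}_1\tilde{g}_2.x)^{-1}\tilde{g}_1\tilde{g}_2\tilde{s}(x) \\
&= \bigl(\tilde{s}(\tilde{g}_1.(\tilde{g}_2.x))^{-1}\tilde{g}_1\tilde{s}(\tilde{g}_2.x)\bigr)\bigl(\tilde{s}(\tilde{g}_2.x)^{-1}\tilde{g}_2\tilde{s}(x)\bigr) \\
&= \tilde{\gamma}_s(\tilde{g}_1, \tilde{g}_2.x)\,\tilde{\gamma}_s(\tilde{g}_2, x),
\end{align*}
which completes the proof. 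I do not anticipate any serious obstacle here; the lemma is essentially a bookkeeping statement about sections, and the main point to be careful about is to use the defining property $\tilde{s}(x).x_o = x$ of the section exactly once (to land in $\tilde{K}$) and nothing more.
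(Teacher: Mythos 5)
Your proof is correct and follows essentially the same route as the paper's: the membership in $\tilde{K}$ is checked by applying the element to $x_o$ and using $\tilde{s}(y).x_o = y$, continuity follows from the continuity of the sections in Assumption 3, and the cocycle identity is the same telescoping computation (you expand the left-hand side while the paper contracts the right-hand side, which is the identical manipulation read in reverse). No gaps.
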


\begin{proof}
Since $\tilde{s}(\tilde{g}.x)\tilde{\gamma}_s(g, x).x_o = \tilde{g}\tilde{s}(x).x_o = \tilde{g}.x$ then $\tilde{\gamma}_s(g, x) \in \tilde{K}$ and from this it is clear that $\tilde{\gamma}_s$ is also continuous by Assumption 2 above. To see that the cocycle identity holds, we simply compute
\begin{align*}
\tilde{\gamma}_s(\tilde{g}_1, \tilde{g}_2.x)\tilde{\gamma}_s(\tilde{g}_2, x) &= \tilde{s}(\tilde{g}_1\tilde{g}_2.x)^{-1}\tilde{g}_1\tilde{s}(\tilde{g}_2.x) \tilde{s}(\tilde{g}_2.x)^{-1}\tilde{g}_2\tilde{s}(x) \\
&= \tilde{s}(\tilde{g}_1\tilde{g}_2.x)^{-1}\tilde{g}_1\tilde{g}_2\tilde{s}(x) = \tilde{\gamma}_s(\tilde{g}_1\tilde{g}_2, x) \, . 
\end{align*}
\end{proof}

The DPPs that we are interested in for this Subsection requires the existence of a unitary $\tilde{G}$-representation that is square-integrable modulo the kernel of the surjective homomorphism $a : \tilde{G} \rightarrow G$.

\noindent\textbf{Assumption 4}: There is a unitary $\tilde{G}$-representation $(\pi, \cH)$ such that
\begin{align*}
\int_G |\langle \pi(b(g))v, v \rangle_{\cH}|^2 dm_G(g) < +\infty 
\end{align*}
for every $v \in \cH$. Moreover, we fix a character $\xi : \tilde{K} \rightarrow \bT^1$ and assume that there is a unit vector $v \in \cH \backslash \{0\}$ such that $\pi(\tilde{k})v = \xi(\tilde{k})v$ for every $\tilde{k} \in \tilde{K}$.

\noindent With Assumptions 3 and 4 we are able to define the kernels of interest for this Subsection.

\begin{definition}
\label{DefinitionGtildeKernels}
Let $(\pi, \cH)$, $\xi$ and $v$ be as in Assumption 4. The kernel $L : X \times X \rightarrow \C$ associated to $(\pi, \xi, v)$ is the function
\begin{align*}
L(x_1, x_2) = \langle \pi(\tilde{s}(x_1))v, \pi(\tilde{s}(x_2))v \rangle_{\cH} \, . 
\end{align*}
\end{definition}

One verifies that a kernel $L$ as in this definition is continuous, Hermitian and positive-definite. To construct a DPP $\mu_L$ from such a kernel $L$, we show that it is in fact an equivariant kernel as introduced in Subsection \ref{Variances of linear statistics for equivariant kernel DPPs}. 

\begin{lemma}
Let $L$ be a kernel as in Definition \ref{DefinitionGtildeKernels}. Then, for every $g \in G$ and every $x_1, x_2 \in X$,
\begin{align*}
L(g.x_1, g.x_2) = \overline{\xi(\tilde{\gamma}_s(b(g), x_1))} \xi(\tilde{\gamma}_s(b(g), x_2)) L(x_1, x_2) \, . 
\end{align*}
\end{lemma}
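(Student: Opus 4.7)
The plan is to unwind the definitions, pushing the $\tilde{G}$-action through the inner product using the cocycle identity and the semi-invariance of the cyclic vector $v$.

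First I would rewrite $\tilde{s}(g.x_i)$ in terms of $b(g)\tilde{s}(x_i)$. Since $\tilde{g}.x = a(\tilde{g}).x$ for all $\tilde{g} \in \tilde{G}$, we have $b(g).x_i = g.x_i$, and by the definition of the $\tilde{K}$-valued cocycle from Lemma \ref{LemmaKtildeValuedCocycleFromSection},
\begin{align*}
\tilde{\gamma}_s(b(g), x_i) = \tilde{s}(g.x_i)^{-1} b(g)\tilde{s}(x_i) \in \tilde{K},
\end{align*}
which rearranges to $\tilde{s}(g.x_i) = b(g)\tilde{s}(x_i)\tilde{\gamma}_s(b(g), x_i)^{-1}$.

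Next I would apply $\pi$ to this identity and act on $v$. Since $\tilde{\gamma}_s(b(g), x_i) \in \tilde{K}$ and $v$ is a $\xi$-semi-invariant unit vector under the action of $\tilde{K}$, we have
\begin{align*}
\pi(\tilde{\gamma}_s(b(g), x_i)^{-1})v = \xi(\tilde{\gamma}_s(b(g), x_i))^{-1} v = \overline{\xi(\tilde{\gamma}_s(b(g), x_i))}\, v,
\end{align*}
using that $\xi$ takes values in the unit circle $\bT^1$. Therefore
\begin{align*}
\pi(\tilde{s}(g.x_i))v = \overline{\xi(\tilde{\gamma}_s(b(g), x_i))}\, \pi(b(g))\pi(\tilde{s}(x_i))v.
\end{align*}

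Finally I would substitute this into the definition of $L(g.x_1, g.x_2)$ and pull the scalars out of the sesquilinear inner product, recalling that the scalar in the second slot gets conjugated. The unitarity of $\pi(b(g))$ then cancels the $\pi(b(g))$-factors in both slots, yielding
\begin{align*}
L(g.x_1, g.x_2)
&= \overline{\xi(\tilde{\gamma}_s(b(g), x_1))}\, \xi(\tilde{\gamma}_s(b(g), x_2)) \langle \pi(\tilde{s}(x_1))v, \pi(\tilde{s}(x_2))v \rangle_{\cH} \\
&= \overline{\xi(\tilde{\gamma}_s(b(g), x_1))}\, \xi(\tilde{\gamma}_s(b(g), x_2))\, L(x_1, x_2),
\end{align*}
as required. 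No step presents a genuine obstacle; the only point requiring care is to keep track of which side of the inner product the scalar appears on (hence which factor is conjugated) and to remember that the inverse of $\xi(\tilde{k})$ coincides with its complex conjugate because $\xi$ lands in $\bT^1$.
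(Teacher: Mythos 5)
Your proposal is correct and follows essentially the same route as the paper: rearrange the cocycle identity to write $\tilde{s}(g.x_i)$ in terms of $b(g)\tilde{s}(x_i)$ and $\tilde{\gamma}_s(b(g),x_i)^{-1}$, use the $\xi$-semi-invariance of $v$ to extract the scalar $\overline{\xi(\tilde{\gamma}_s(b(g),x_i))}$, and cancel the $\pi(b(g))$ factors by unitarity. The bookkeeping of which scalar is conjugated in the second slot of the inner product is handled correctly.
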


\begin{proof}
Using that $\pi(\tilde{\gamma}_s(b(g), x)^{-1})v = \overline{\xi(\tilde{\gamma}_s(b(g), x))}v$, we find that
\begin{align*}
\pi(\tilde{s}(g.x))v &= \pi(\tilde{s}(b(g).x) \tilde{\gamma}_s(b(g), x)) \tilde{\gamma}_s(b(g), x)^{-1})v \\
&= \pi(\tilde{s}(b(g).x) \tilde{\gamma}_s(b(g), x))\pi(\tilde{\gamma}_s(b(g), x)^{-1})v \\
&= \overline{\xi(\tilde{\gamma}_s(b(g), x))} \pi(b(g)\tilde{s}(x))v \\
&= \overline{\xi(\tilde{\gamma}_s(b(g), x))} \pi(b(g)) \pi(\tilde{s}(x))v 
\end{align*}
from which it follows that 
\begin{align*}
L(g.x_1, g.x_2) &= \langle \pi(\tilde{s}(g.x_1))v, \pi(\tilde{s}(g.x_2))v \rangle_{\cH} \\
&= \overline{\xi(\tilde{\gamma}_s(b(g), x_1))} \xi(\tilde{\gamma}_s(b(g), x_2)) \langle \pi(b(g)) \pi(\tilde{s}(x_1))v, \pi(b(g)) \pi(\tilde{s}(x_2))v \rangle_{\cH} \\
&= \overline{\xi(\tilde{\gamma}_s(b(g), x_1))} \xi(\tilde{\gamma}_s(b(g), x_2)) \langle \pi(\tilde{s}(x_1))v, \pi(\tilde{s}(x_2))v \rangle_{\cH} \\
&= \overline{\xi(\tilde{\gamma}_s(b(g), x_1))} \xi(\tilde{\gamma}_s(b(g), x_2)) L(x_1, x_2) \, . 
\end{align*}
\end{proof}
Setting $c(g, x) = \overline{\xi(\tilde{\gamma}_s(b(g), x))}$ we see that $L$ is an equivariant kernel as in Equation \ref{EqEquivariantKernel}, so there is a well-defined invariant determinantal point process $\mu_L$ on $X$ associated to $L$. 

\begin{remark}
We emphasize that although $\tilde{\gamma}_s$ is a cocycle, the map $c(g, x) = \overline{\xi(\tilde{\gamma}_s(b(g), x))}$ is in general not, but only equivalent to a cocycle modulo some $2$-cocycle. 
\end{remark}

Moreover, Lemma \ref{LemmaVarianceDenistyOfDPPs} holds for $\mu_L$ and Corollary \ref{CorollaryDiffractionMeasureofEquivariantKernalDPP} also holds since we assumed that the unitary $\tilde{G}$-representation $(\pi, \cH)$ was square-integrable modulo the kernel of the surjective homomorphism $a : \tilde{G} \rightarrow G$. To summarize, we can write the $\mu_L$-variance of a linear statistic $\bS f$, $f \in \Borelbndinfty(X)$, as
\begin{align*}
\Var_{\mu_L}(\bS f) = \int_{\cS^+} |\hat{\varphi}_f(\omega)|^2 (1 - \hat{\kappa}_L(\omega)) d\sigma_{\cP}(\omega) \, , 
\end{align*}
where
\begin{align*}
\hat{\kappa}_L(\omega) = \int_G |\langle \pi(b(g))v, v \rangle_{\cH}|^2 \omega(g^{-1}) dm_G(g) \, .  
\end{align*}

\subsection{Weyl-Heisenberg ensembles}
\label{Weyl-Heisenberg ensembles}
Consider $\C^d$ with the symplectic form 
$$\zeta(z_1, z_2) = \Im(\langle z_1, z_2 \rangle)$$
and let $G = \U(d) \ltimes \C^d$ be the corresponding group of unitary motions of $\C^d$ with stabilizer $K = \U(d) \ltimes \{0\}$ of the origin. The \emph{Heisenberg motion group} associated to $G$ can be realized as a central extension of $G$ by $\R$ using the symplectic form $\zeta$, 
\begin{align*}
\tilde{G} = G \oplus_{\zeta} \R = \U(d) \ltimes (\C^d \oplus_{\zeta} \R)
\end{align*}
with the group law 
$$(k_1, z_1, t_1)(k_2, z_2, t_2) = (k_1k_2, z_1 + k_1.z_2, t_1 + t_2 - \tfrac{1}{2}\Im(z_1, k_1.z_2)) \, . $$
The subgroup $\bH_d = \C^d \oplus_{\zeta} \R$ is the $(2d + 1)$-dimensional \emph{Heisenberg group}. Note that the $\tilde{G}$-stabilizer 
$$\tilde{K} = \U(d) \ltimes (\{0\} \oplus_{\zeta} \R) \cong \U(d) \times \R $$
is non-compact. Here, we fix the continuous section $s : \C^d \rightarrow G$, $s(z) = (1, z)$, the surjective homomorphism $a : \tilde{G} \rightarrow G$ given by $a(k, z, t) = (k, z)$ and the canonical continuous section $b : G \rightarrow \tilde{G}$ of $a$, $b(k, z) = (k, z, 0)$, yielding a section $\tilde{s} : \C^d \rightarrow \tilde{G}$ given by $\tilde{s}(z) = b(s(z)) = (1, z, 0)$. 

It is a fact that the pair $(\tilde{G}, K)$ is Gelfand, see for example \cite[p. 342]{KoranyiHeisenbergMotionGroupGelfandPair}, and the $K$-spherical unitary irreducible $\tilde{G}$-representations $(\pi_{\lambda, n}, \cH_{\lambda, n})$ that we will take interest in are, up to unitary equivalence, parametrized by $\lambda > 0$ and $n \in \Z_{\geq 0}$ with associated spherical functions $\omega_{\lambda, n} \in \cS^+$ explicitly given by 
\begin{align*}
\omega_{\lambda, n}(k, z, t) = \langle \pi_{\lambda, n}(k, z, t)v_{\lambda, n}, v_{\lambda, n} \rangle_{\cH_{\lambda, n}} = \e^{i\lambda t} \cL_n^{(d-1)}(\tfrac{1}{2} |\lambda| \norm{z}^2) \e^{-\frac{1}{4}|\lambda| \norm{z}^2} \, ,
\end{align*}
see for example \cite[Prop. 3.2.4, p. 119]{ThangaveluHeisenbergBook}. Here, $v_{\lambda, n} \in \cH_{\lambda, n}$ is a $K$-invariant unit vector and
\begin{align*}
\cL_n^{(\alpha)}(r) = \frac{r^{-\alpha}\e^r}{n!} \frac{d^n}{dr^n}(r^{n + \alpha}\e^{-r}) 
\end{align*}
denotes a generalized Laguerre polynomial. The kernels of interest are 
\begin{align*}
L_{\lambda, n}(z_1, z_2) = \langle \pi_{\lambda, n}(1, z_1, 0)v_{\lambda, n}, \pi_{\lambda, n}(1, z_2, 0)v_{\lambda, n} \rangle_{\cH_{\lambda, n}} \, , 
\end{align*}
which can be rewritten as 
\begin{align*}
L_{\lambda, n}(z_1, z_2) &= \omega_{\lambda, n}(1, z_1 - z_2, \tfrac{1}{2}\Im(\langle z_1, z_2\rangle)) \\
&= \e^{\tfrac{i\lambda}{2} \Im(\langle z_1, z_2 \rangle)} \cL_n^{(d-1)}(\tfrac{1}{2} |\lambda| \norm{z_1 - z_2}^2) \e^{-\frac{1}{4}|\lambda| \norm{z_1 - z_2}^2} .
\end{align*}
Note that if $k \in \U(d)$ and $w \in \C^d$ then $g = (k, w) \in G$ and 
\begin{align*}
L_{\lambda, n}(g.z_1, g.z_2) &= \e^{\tfrac{i\lambda }{2}\Im(\langle g.z_1, g.z_2 \rangle)} \cL_n^{(d-1)}(\tfrac{1}{2} |\lambda| \norm{z_1 - z_2}^2) \e^{-\frac{1}{4}|\lambda| \norm{z_1 - z_2}^2} \\
&= \e^{\tfrac{i\lambda }{2}(\Im(\langle g.z_1, g.z_2 \rangle - \langle z_1, z_2 \rangle))} L_{\lambda, n}(z_1, z_2)\, . 
\end{align*}
Since 
\begin{align*}
\langle g.z_1, g.z_2 \rangle - \langle z_1 , z_2 \rangle = (\langle k.z_1, w \rangle + \tfrac{1}{2}\norm{w}^2) + \overline{(\langle k.z_2, w \rangle + \tfrac{1}{2}\norm{w}^2)}
\end{align*}
then defining $c_{\lambda}(g, z) = \e^{\frac{i\lambda}{2}\Im(\langle k.z, w \rangle + \tfrac{1}{2}\norm{w}^2)} \in \bT^1$ allows us to write
\begin{align*}
L_{\lambda, n}(g.z_1, g.z_2) &= c_{\lambda}(g, z_1) \overline{c_{\lambda}(g, z_2)} L_{\lambda, n}(z_1, z_2) \, ,
\end{align*}
so that $L_{\lambda, n}$ defines an equivariant kernel as in Subsection \ref{Variances of linear statistics for equivariant kernel DPPs}. If say $\lambda > 0$ is positive, then the formula for $L_{\lambda, n}$ can be simplified further to
\begin{align*}
L_{\lambda, n}(z_1, z_2) &= \cL_n^{(d-1)}(\tfrac{1}{2} \lambda \norm{z_1 - z_2}^2) \e^{-\frac{1}{4}\lambda (\norm{z_1}^2 + \norm{z_2}^2 - 2 \langle z_1, z_2 \rangle)} \, . 
\end{align*}
We denote the associated invariant locally square-integrable determinantal point process in $\C^d$ by $\mu_{\lambda, n} := \mu_{L_{\lambda, n}}$.

\begin{remark}
When $d = 1$, $\lambda = 2\pi$ and $n = 0$, the point process $\mu_{2\pi, 0}$ is the \emph{infinite Ginibre ensemble} in $\C$ with kernel 
\begin{align*}
L_{2\pi, 0}(z_1, z_2) = \e^{\pi z_1 \overline{z}_2  - \frac{\pi}{2}(|z_1|^2 + |z_2|^2)} \, . 
\end{align*}
For general dimension $d$ and general $n \in \Z_{\geq 0}$ the corresponding determinantal point process $\mu_{2\pi, n}$ in $\C^d$ are \emph{infinite polyanalytic ensembles of pure type} as defined in \cite{AbreuPereiraRomeroTorquato}.
\end{remark}
The function $\kappa_{\lambda, n}(g) = |L_{\lambda, n}(0, g.0)|^2$ satisfies
\begin{align*}
\int_{G} \kappa_{\lambda, n}(g) dm_G(g) &= \int_{\C^d} |L_{\lambda, n}(0, z)|^2 dA^{\otimes d}(z) \\
&= \int_{\C^d} \cL_n^{(d-1)}(\tfrac{1}{2} |\lambda| \norm{z}^2)^2 \e^{-\frac{1}{2}|\lambda| \norm{z}^2} dA^{\otimes d}(z) < + \infty \, , 
\end{align*}
where $A$ denotes the Lebesgue measure on $\C$, so by Corollary \ref{CorollaryDiffractionMeasureofEquivariantKernalDPP} we can write the variance of linear statistics as 
\begin{align*}
\Var_{\mu_{\lambda, n}}(\bS f) = \int_{\C^d} |\hat{\varphi}_f(\zeta)|^2 (1 - \hat{\kappa}_{\lambda, n}(\zeta)) dA^{\otimes d}(\zeta)
\end{align*}
where 
\begin{align*}
\hat{\kappa}_{\lambda, n}(\zeta) = 2^{d - 1} (d-1)! \int_{\C^d} \cL_n^{(d-1)}(\tfrac{1}{2} |\lambda| \norm{z}^2)^2 \e^{-\frac{1}{2}|\lambda| \norm{z}^2} \frac{J_{d - 1}(2\pi \norm{\zeta}\norm{z})}{(\norm{\zeta}\norm{z})^{d - 1}} dA^{\otimes d}(z) \, . 
\end{align*}

\begin{remark}
When $d = 1$, $\hat{\kappa}_{\lambda, n}$ can be calculated explicitly:
\begin{align*}
\hat{\kappa}_{\lambda, n}(\zeta) &= \int_{\C} \cL_n(\tfrac{1}{2} |\lambda| |z|^2)^2 \e^{-\frac{1}{2}|\lambda| |z|^2} J_{0}(2\pi |\zeta| |z|) dA(z) \\
&= 2\pi \int_0^{\infty} \cL_n(\tfrac{1}{2} |\lambda| s^2)^2 \e^{-\frac{1}{2}|\lambda| s^2} J_{0}(2\pi |\zeta| s) s ds \, . 
\end{align*}
From \cite[Equation 7.422(2), p. 812]{GradshteynRyzhik} with $\nu = \sigma = 0, m = n, \alpha = |\lambda|/2, y = 2\pi|\zeta|$, we get
\begin{align*}
\int_0^{\infty} \cL_n(\tfrac{1}{2} |\lambda| s^2)^2 \e^{-\frac{1}{2}|\lambda| s^2} J_{0}(2\pi |\zeta| s) s ds = \frac{1}{|\lambda|} \cL_n(\tfrac{2\pi^2}{|\lambda|} |\zeta|^2)^2 \e^{-\frac{2\pi^2}{|\lambda|} |\zeta|^2} \, , 
\end{align*}
so that 
\begin{align*}
\hat{\kappa}_{\lambda, n}(\zeta) &= \frac{2\pi}{|\lambda|} \cL_n(\tfrac{2\pi^2}{|\lambda|} |\zeta|^2)^2 \e^{-\frac{2\pi^2}{|\lambda|} |\zeta|^2} \, . 
\end{align*}
The variance of a linear statistic is thus given by 
\begin{align*}
\Var_{\mu_{\lambda, n}}(\bS f) = \int_{\C} |\hat{\varphi}_f(\zeta)|^2 \Big(1 - \frac{2\pi}{|\lambda|} \cL_n(\tfrac{2\pi^2}{|\lambda|} |\zeta|^2)^2 \e^{-\frac{2\pi^2}{|\lambda|} |\zeta|^2}\Big) dA(\zeta) \, . 
\end{align*}
Note in particular that $\hat{\kappa}_{\lambda, n}(0) = 2\pi/|\lambda|$, so the infinite polyanalytic ensembles $\mu_{\lambda, n}$ are spectrally hyperuniform if and only if $\lambda = \pm 2\pi$.
\end{remark}

\begin{remark}
In general, the DPPs $\mu_{\lambda, n}$ are (spectrally) hyperuniform whenever $\hat{\kappa}_{\lambda, n}(1) = 1$, and this quantity coincides with the notion of \emph{formal dimension} of the representation $\pi_{\lambda, n}$ modulo the center $Z = Z(\tilde{G}) \cong \R$,
\begin{align*}
d_{\pi_{\lambda, n}} &= \int_{\tilde{G}/Z} |\langle \pi_{\lambda, n}(\tilde{g})v_{\lambda, n}, v_{\lambda, n} \rangle_{\cH}|^2 dm_{\tilde{G}/Z}(\tilde{g}Z) \\
&= \int_{\C^d} |\langle \pi_{\lambda, n}(1, z, 0)v_{\lambda, n}, v_{\lambda, n} \rangle_{\cH}|^2 dA(z) = \hat{\kappa}_{\lambda, n}(1) \, . 
\end{align*}
\end{remark}

\subsection{Bergman kernel determinantal point process in the unit disk}
\label{Bergman kernel determinantal point process in the unit disk}

In this Subsection we consider the the random zero set in the open unit disk $\bD \subset \C$ of the Gaussian analytic function
\begin{align*}
F_1(z) = \sum_{n = 0}^{\infty} a_n z^n \, , \quad |z| < 1 \, , 
\end{align*}
where $a_n$ standard complex Gaussian random variables. One shows that the random zero set of $F_1$ is invariant under the action of $\SU(1, 1)$ on $\bD$ by Möbius transformations. Peres and Vir\'ag in \cite{PeresVirag} showed that the random zero set of the random analytic function $F_1$ is equivalent to the invariant determinantal point process governed by the \emph{Bergman kernel}
\begin{align*}
L_o(z_1, z_2) = \frac{1}{\pi(1 - z_1\overline{z}_2)^2} \, , \quad z_1, z_2 \in \bD \, . 
\end{align*}
In this Subsection we compute the Bartlett spectral measure of this DPP.
 
\subsubsection{Preliminaries on the Poincaré disk}

Consider the open unit disk $\bD = \{ z \in \C : |z| < 1 \}$ endowed with the Poincar\'e metric 
$$d(z_1, z_2) = \arccosh\Big( 1 +  \frac{2 |z_1 - z_2|^2}{(1 - |z_1|^2)(1 - |z_2|^2)} \Big) \, . $$
The group $G = \SU(1, 1)$ acts isometrically and transitively on $\bD$ by Möbius transformations with maximal compact stabilizer $K = \U(1)$ realized as diagonal matrices, so that $\bD = G/K$. An invariant measure on $\bD$ is 
\begin{align*}
m_{\bD}(f) = \int_{\bD} f(z) \frac{dA(z)}{\pi(1 - |z|^2)^2} \, , \quad f \in \Borelbndinfty(\bD) \, , 
\end{align*}
where $A$ denotes the standard Lebesgue measure on the complex plane, satisfying $A(\bD) = \pi$. The $\U(1)$-spherical functions for $\SU(1, 1)$ can be expressed in terms of Legendre functions,
\begin{align*}
\omega_{\lambda}(g) = P_{-(\frac{1}{2} + i\lambda)}(\cosh(d(g.0, 0))) = \frac{1}{\pi} \int_0^{\pi} (\cosh(d(g.0, 0)) - \sinh(d(g.0, 0))\cos(t))^{-(\frac{1}{2} + i\lambda)} dt   
\end{align*}
and the space $\cS^+$ positive-definite spherical functions is homeomorphic to the space of parameters
\begin{align*}
\lambda \in \Omega_{\bD} := i(0, \tfrac{1}{2}] \cup [0, +\infty) \subset \C \, ,  
\end{align*}
where the real $\lambda$ correspond to principal series representations and imaginary $\lambda$ to the complementary series representations. Given this parametrization we will write $\hat{\varphi}(\lambda) = \hat{\varphi}(\omega_{\lambda})$ for the spherical transform of a function $\varphi$ on $G$. Moreover, the spherical Plancherel measure for $(\SU(1, 1), \U(1))$ can be explicitly realized on $\Omega_{\bD}$ as 
\begin{align*}
d\sigma_{\cP}(\omega_{\lambda}) = \chi_{(0, +\infty)}(\lambda)  \pi \lambda \tanh(\tfrac{\pi \lambda}{2}) d\lambda \, . 
\end{align*}

\subsubsection{Bergman kernels}

Consider the \emph{Bergman kernel} on $\bD$, given by
\begin{align*}
L_o(z_1, z_2) = \frac{1}{\pi(1 - z_1\overline{z}_2)^2} \, , 
\end{align*}
which is the reproducing kernel of the Bergman space $\cA^2(\bD, A)$ of square-$A$-integrable holomorphic functions on $\bD$. If we let 
\begin{align*}
g = \begin{pmatrix} u & v \\ \overline{v} & \overline{u} \end{pmatrix} \, , \quad |u|^2 - |v|^2 = 1
\end{align*}
be a Möbius transformation in $\SU(1, 1)$, then 
\begin{align*}
L_o(g.z_1, g.z_2) &= \frac{1}{\pi} \Big( 1 - \frac{uz_1 + v}{\overline{v}z_1 + \overline{u}}\frac{\overline{u}\overline{z}_2 + \overline{v}}{v \overline{z}_2 + u} \Big)^{-2} \\
&= \frac{(\overline{v}z_1 + \overline{u})^2 (v \overline{z}_2 + u)^2 }{\pi ((\overline{v}z_1 + \overline{u}) (v \overline{z}_2 + u) - (uz_1 + v)(\overline{u}\overline{z}_2 + \overline{v}))^2} \\
&= \frac{(\overline{v}z_1 + \overline{u})^2 (v \overline{z}_2 + u)^2 }{\pi (1 - z_1 \overline{z}_2)^2} = c_o(g, z_1) \overline{c_o(g, z_2)} L_o(z_1, z_2) \, ,
\end{align*}
where $c_o(g, z) = (\overline{v}z +\overline{u})^2$ is the equivariance factor, in fact a cocycle. Typically, we have $|c_o(g, z)| \neq 1$ so we modify the Bergman kernel to the following equivariant kernel,
\begin{align*}
L(z_1, z_2) = \frac{L_o(z_1, z_2)}{L_o(z_1, z_1)^{1/2} L_o(z_2, z_2)^{1/2}} = \frac{(1 - |z_1|^2)(1 - |z_2|^2)}{(1 - z_1 \overline{z}_2)^2} \, .  
\end{align*}
By \cite[Remark 5.1.2+5.1.7]{baccelli:hal-02460214} with $f(z) = L_o(z, z) = (1 - |z|^2)^2$, the determinantal point process $\mu_L$ is equivalent to the determinantal point process $\mu_{L_o}$. Defining 
\begin{align*}
c(g, z) = \frac{c_o(g, z)}{|c_o(g, z)|} = \frac{\overline{v}z + \overline{u}}{v \overline{z} + u} 
\end{align*}
then $|c(g, z)| = 1$ for all $g \in \SU(1, 1)$ and all $z \in \bD$, and it is immediate from the definition of $L$ that 
\begin{align*}
L(g.z_1, g.z_2) = c(g, z_1) \overline{c(g, z_2)} L(z_1, z_2) \, .  
\end{align*}
Moreover,
\begin{align*}
\kappa_L(g) = |L(0, g.0)|^2 = (1 - |g.0|^2)^2 
\end{align*}
for all $g \in \SU(1, 1)$ and 
\begin{align*}
\int_{\SU(1, 1)} \kappa_L(g) dm_{\SU(1,1)}(g) = \int_{\bD} |L(0, z)|^2 dm_{\bD}(z) = \frac{1}{\pi} \int_{\bD} dA(z) = 1 < +\infty \, , 
\end{align*}
so by Corollary \ref{CorollaryDiffractionMeasureofEquivariantKernalDPP} the variance of a linear statistic with respect to the associated invariant determinantal point process $\mu_L$ is 
\begin{align*}
\Var_{\mu_L}(\bS f) =  \int_{0}^{\infty} |\hat{\varphi}_f(\lambda)|^2 (1 - \hat{\kappa}_L(\lambda)) \pi \lambda \tanh(\tfrac{\pi \lambda}{2}) d\lambda 
\end{align*}
for all radial $f \in \Borelbndinfty(\bD)$. For our modified Bergman kernel $L$, the spherical transform $\hat{\kappa}_L$ of $\kappa_L(g) = (1 - |g.0|^2)^2$ has been calculated by Unterberger-Upmeier in \cite[Prop. 3.39, p.591]{UnterbergerUpmeier} as
\begin{align*}
\hat{\kappa}_L(\lambda) = \frac{1}{\pi} \int_{\bD} P_{-(\tfrac{1}{2} + i\lambda)}(d(z, 0)) dA(z) =  | \Gamma(\tfrac{3}{2} + i\lambda) |^2 \, .
\end{align*}
Finally, the $\mu_L$-variance and Bartlett spectral measure of $\mu_L$ are given by
\begin{align}
\label{EqBergmanDPPVariance}
\Var_{\mu_L}(\bS f) = \int_{0}^{\infty} |\hat{\varphi}_f(\lambda)|^2 \Big(1 - | \Gamma(\tfrac{3}{2} + i\lambda) |^2\Big) \pi\lambda \tanh\Big(\frac{\pi \lambda}{2}\Big) d\lambda \, . 
\end{align}
As mentioned in the Introduction, the point process $\mu_L$ is not spectrally hyperuniform, see Definition \ref{DefHyperbolicSpectralHyperuniformity}, since
\begin{align*}
\limsup_{\varepsilon \rightarrow 0^+} \frac{\sigma_{\mu}((0, \varepsilon])}{\sigma_{\cP}((0, \varepsilon])} = \limsup_{\varepsilon \rightarrow 0^+} \frac{\int_0^{\varepsilon}(1 - | \Gamma(\tfrac{3}{2} + i\lambda) |^2) \pi\lambda \tanh(\tfrac{\pi \lambda}{2}) d\lambda}{\int_0^{\varepsilon} \pi\lambda \tanh(\tfrac{\pi \lambda}{2}) d\lambda} = 1 - \Gamma(\tfrac{3}{2})^2 > 0 \, . 
\end{align*}

%\section{Random distributions and heat kernel hyperuniformity}
%\label{Random distributions and heat kernelhyperuniformity}
%\input{RandomDistributionsAndHeatkernelHyperuniformity}

%\newpage

\appendix

\section{Random distributions and heat kernel hyperuniformity}
\label{Random distributions and heat kernelhyperuniformity}

Test function spaces, the corresponding distributions and the Bochner-Minlos Theorem are presented in the first two Subsections. We prove that every positive tempered Radon measure on the spherical dual $\cS^+$ of Euclidean and real hyperbolic spaces can be realized as the Bartlett spectral measure of a random tempered distribution in Subsections \ref{Gaussian random distributions on commutative spaces} and \ref{Heat kernel hyperuniformity on Euclidean and real hyperbolic spaces}. In the latter Subsection we also prove the equivalence of spectral hyperuniformity and heat kernel hyperuniformity for Euclidean and real hyperbolic spaces.

\subsection{Test function spaces}
\label{Test function spaces}

Let $(G, K)$ be a lcsc Gelfand pair with $K < G$ compact and let $X = G/K$ be the associated proper commutative metric space. Throughout this Appendix we will let $\sA_o(G) \subset \sL^1(G, \R)$ be a real $\lambda_G$-invariant nuclear Frech\'et $*$-subalgebra such that the spherical transforms of bi-$K$-averaged functions $\sA(G, K)$ separate points in $\cS^+$ as in Assumption 1 from Section \ref{Spectral measures}. Moreover, we will assume that the section $s : X \rightarrow G$ is such that $\sA_o(X) = P_K(\sA_o(G)) \subset \sL^1(X, \R)$ has the structure of a real $\lambda_X$-invariant nuclear Frech\'et space. We will denote by $\sA(G), \sA(X)$ the complexifications of $\sA_o(G), \sA_o(X)$ respectively and $\sA_o'(G), \sA_o'(X)$ will be the respective real dual spaces endowed with the \emph{strong topology}.

\begin{example}
We are particularly interested in the following cases:
\begin{enumerate}
    \item When $G = X = \R^d$ one can consider the nuclear Frech\'et algebra $\sA_o(\R^d) = \sS(\R^d, \R)$ of real-valued Schwartz functions, whose Frech\'et topology is generated by the seminorms
    \begin{align*}
    \norm{f}_{n, m} = \sup_{x \in \R^d} (1 + \norm{x})^{n} |\partial^{m} f(x)| 
    \end{align*}
    for all $n \in \Z_{\geq 0}$ and all multi-indices $m \in \Z_{\geq 0}^d$. The strong dual $\sS'(\R^d, \R)$ is the space of real-valued tempered distributions on $\R^d$, in other words distributions of polynomial growth. More generally, there is for every $\delta \in [0, +\infty)$ a \emph{Gelfand-Shilov space} $\sA_o(X) = \sS_{\delta}(\R^d, \R)$ of smooth functions generated by the seminorms
    \begin{align*}
    \norm{f}_{\delta, n, m} = \sup_{x \in \R^d} (1 + \norm{x})^{n} \e^{\delta\norm{x}} |\partial^{m} f(x)| \, , 
    \end{align*}
    and the corresponding strong dual space $\sS_{\delta}'(\R^d, \R)$ consists of distributions on $\R^d$ whose asymptotic growth are bounded by $(1 + \norm{x})^n \e^{\delta\norm{x}}$ for some $n \geq 0$. For nuclearity of the Schwartz- and Gelfand-Shilov spaces, see \cite[Chapter II, Section 2, p.86]{GelfandShilovGeneralizedFunctionsVolume2} for general definitions and \cite[Chapter IV, Section 3.2, p.178]{GelfandShilovGeneralizedFunctionsVolume3} for a proof with weights $M_p(x) = (1 + \norm{x}^2)^p \e^{\delta\norm{x}}$.
    \item Consider real hyperbolic space $\H^d = G_d/K_d$ with $G_d = \SO^{\circ}(1, d)$ (or more generally a semisimple connected Lie group with finite center) and $K_d = \SO(d)$ (in general a maximal compact subgroup). On $G_d$, one can consider the real \emph{Harish-Chandra $L^p$-spaces} $\sA_o(G_d) = \sC^p(G_d, \R)$ for $p \in (0, 2]$ generated by seminorms
    \begin{align*}
    \norm{f}_{p, n, D_l, D_r} = \sup_{g \in G_d} (1 + \ell(g))^{n} \Xi(g)^{-2/p} |(D_lfD_r)(g)| \, , 
    \end{align*}
    where $D_l, D_r$ are left- resp. right-invariant differential operators on $G_d$ and $\ell(g)$ is the hyperbolic distance between a fixed basepoint $x_o$ and $g.x_o$. Here, $\Xi$ is a spherical function for $(G_d, K_d)$ known as the \emph{Harish-Chandra $\Xi$-function}, and for real $d$-dimensional hyperbolic space it is given by
    \begin{align}
    \label{EqHarishChandraXiFunction}
    \Xi(g) = \frac{2^{\frac{d-1}{2}}\Gamma(\frac{d}{2})}{\sqrt{\pi} \, \Gamma(\frac{d - 1}{2})} \sinh(\ell(g))^{2 - d}\int_0^{\ell(g)} (\cosh(\ell(g)) - \cosh(s))^{\frac{d-3}{2}} ds \, .
    \end{align}
    Regarding nuclearity, it is known that the subspace $\sC^p(G_d, K_d, \R)$ of bi-$K_d$-invariant functions is isomorphic to a Gelfand-Shilov space, see for example \cite[Theorem 2]{Anker1991TheSF}, so it is nuclear and this subspace will suffice for our purposes. We mention briefly that nuclearity of the subspace $\sC^p(G_d, \R)^{K_d}$ of left-$K_d$-invariant functions can be deduced from a Paley-Wiener-type result for the Helgason-Fourier transform due to Helgason for $p = 2$ and Eguchi in \cite[Theorem 4.1.1, p.193]{EguchiHCLpSpaces} for general $0 < p \leq 2$.
    
\end{enumerate}
\end{example}

\subsection{The Bochner-Minlos Theorem}
\label{The Bochner-Minlos Theorem}

Let $\sA_o(X) \subset \sL^1(X, \R)$ be a real $G$-invariant nuclear Frech\'et space as before. A function $\Lambda : \sA_o(X) \rightarrow \C$ is \emph{positive-definite} if for every finite collection of complex numbers $z_1, \dots, z_n \in \C$ and functions $f_1, \dots f_n \in \sA_o(X)$,
\begin{align*}
\sum_{i, j = 1}^n z_i \overline{z}_j \Lambda(f_i - f_j) \geq 0 \, .
\end{align*}
Every positive-definite functions on general real nuclear Frech\'et spaces can by the following Theorem be realized as the characteristic function of a unique probability measure on the strong dual space.

\begin{theorem}[Bochner-Minlos]
\label{TheoremBochnerMinlos}
Let $\sA$ be a real nuclear Frech\'et vector space and let $\Lambda: \sA \rightarrow \C$ be a positive-definite continuous function such that $\Lambda(0) = 1$. Then there is a unique probability measure $\mu_{\Lambda}$ on the strong dual space $\sA'$ such that
\begin{align*}
\Lambda(f) = \int_{\sA'} \e^{i\xi(f)} d\mu_{\Lambda}(\xi) \, , \quad \forall \, f \in \sA \, . 
\end{align*}
\end{theorem}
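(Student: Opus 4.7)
The plan is to follow the classical two-step strategy underlying the Minlos theorem: produce a cylindrical probability on finite-dimensional quotients of $\sA$ using the classical Bochner theorem on $\R^n$, and then exploit nuclearity of $\sA$ to upgrade this cylindrical object to a countably additive Borel probability measure concentrated on the strong dual $\sA'$.

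First, for every finite-dimensional subspace $V \subset \sA$, the restriction $\Lambda|_V$ is a continuous positive-definite function on $V$ with value $1$ at the origin. Classical Bochner applied to $V \cong \R^{\dim V}$ yields a unique Borel probability measure $\mu_V$ on the dual space $V^*$ with $\Lambda(f) = \int_{V^*} \e^{i\xi(f)} d\mu_V(\xi)$ for all $f \in V$. For $W \subset V$ a further subspace, the dual restriction map $V^* \to W^*$ pushes $\mu_V$ forward to $\mu_W$, producing a compatible projective family indexed by the directed set of finite-dimensional subspaces of $\sA$. The Kolmogorov extension theorem then defines a finitely additive cylindrical probability $\tilde\mu$ on the algebraic dual of $\sA$ whose projection to each $V^*$ recovers $\mu_V$, so that $\Lambda$ is the cylindrical Fourier transform of $\tilde\mu$ in a tautological sense.

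The main obstacle, and the point at which nuclearity enters decisively, is upgrading $\tilde\mu$ to a true Radon probability measure on the topological dual $\sA'$. By continuity of $\Lambda$ at $0$, for every $\epsilon > 0$ there is a continuous Hilbertian seminorm $p_\epsilon$ on $\sA$ with $|1 - \Lambda(f)| < \epsilon$ whenever $p_\epsilon(f) < 1$. Nuclearity furnishes a further continuous Hilbertian seminorm $q_\epsilon \geq p_\epsilon$ such that the canonical map between the Hilbert completions $\sA_{q_\epsilon} \to \sA_{p_\epsilon}$ is Hilbert-Schmidt. The standard Minlos tightness estimate, which couples the Hilbert-Schmidt norm of this inclusion with probing $\Lambda$ against a standard Gaussian on the completion, then shows that for every $\delta > 0$ the cylindrical probability $\tilde\mu$ concentrates up to mass $1 - \delta$ on a bounded ball of the dual Hilbert space of $\sA_{q_\epsilon}$. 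Since any such ball embeds weak-$*$ compactly into $\sA'$ by Banach-Alaoglu, this delivers tightness of $\tilde\mu$ on the cylinder $\sigma$-algebra of $\sA'$, and Prokhorov's theorem extends $\tilde\mu$ to a Radon probability measure $\mu_\Lambda$ on the strong dual.

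The identity $\Lambda(f) = \int_{\sA'} \e^{i\xi(f)} d\mu_\Lambda(\xi)$ then holds on each finite-dimensional subspace by construction, and uniqueness of $\mu_\Lambda$ follows from the monotone class theorem applied to the cylinder $\sigma$-algebra on $\sA'$, which generates the Borel $\sigma$-algebra and on which the characteristic functional $\Lambda$ uniquely determines the measure. I expect the Minlos tightness estimate to be the main technical hurdle, with everything else reducing to either classical finite-dimensional Fourier analysis or bookkeeping with projective limits; the role of the Fr\'echet assumption is to reduce the continuity hypothesis at $0$ to a single Hilbertian seminorm, allowing the Hilbert-Schmidt factorization provided by nuclearity to be applied cleanly.
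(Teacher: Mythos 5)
The paper does not prove this statement: Theorem \ref{TheoremBochnerMinlos} is quoted as a classical black box (it is the Bochner--Minlos theorem from the theory of nuclear spaces) and is only \emph{used} in the Appendix, so there is no in-paper argument to compare yours against. Judged on its own, your outline is the standard textbook proof and is structurally correct: finite-dimensional Bochner on each quotient $V^*$, compatibility of the resulting family under dual restriction, Kolmogorov extension to the algebraic dual, the Minlos tightness estimate to push the mass onto $\sA'$, and a monotone class argument for uniqueness. Two remarks. First, the one genuinely nontrivial ingredient --- the quantitative estimate showing that if $|1-\Lambda(f)|<\epsilon$ on the unit ball of a Hilbertian seminorm $p_\epsilon$ and the canonical map $\sA_{q_\epsilon}\to\sA_{p_\epsilon}$ is Hilbert--Schmidt, then the cylindrical measure puts mass at least $1-\epsilon-C\|i\|_{\mathrm{HS}}^2/R^2$ on the $R$-ball of the dual of $\sA_{q_\epsilon}$ --- is named but not proved; since everything else in your sketch is routine, this estimate \emph{is} the theorem, and a complete write-up would have to carry out the Gaussian-averaging computation behind it. Second, two small points of hygiene: the Kolmogorov extension is already countably additive on the algebraic dual (it is only its restriction to the cylinder sets of $\sA'$ that is a priori merely finitely additive), and for the uniqueness step you should justify that the cylinder $\sigma$-algebra generates the Borel $\sigma$-algebra of the \emph{strong} dual --- this is where the nuclear Fr\'echet hypothesis is used again, since the strong dual is then a countable inductive limit of separable Hilbert spaces and hence a Souslin space. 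Neither point invalidates the approach.
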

If $\Lambda : \sA_o(X) \rightarrow \C$ is in addition $G$-invariant, then by uniqueness of the measure $\mu_{\Lambda}$ from the Bochner-Minlos Theorem we must have $g_*\mu_{\Lambda} = \mu_{\Lambda}$ for all $g \in G$, so $\mu_{\Lambda}$ is a $G$-invariant probability measure on $\sA_o'(X)$. 

Let $\mu$ be a fixed $G$-invariant probability measure on $\sA_o'(X)$. We refer to $\mu$ as a \emph{random $\sA_o'$-distribution}. Moreover, and we define for every function $f \in \sA_o(X)$ a generalized linear statistic $\bS f : \sA_o'(X) \rightarrow \C$ by setting $\bS f(\xi) = \xi(f)$. If all linear statistics $\bS f$ have finite second moment with respect to $\mu$ and $t \mapsto \Lambda(t \cdot f)$ is $C^2$ in a neighbourhood of $t = 0$ for every $f$, then the moments can be computed using $\Lambda$ as
\begin{align}
\label{EquationFirstAndSecondMomentOfRandomDistribution}
\int_{\sA_o'(X)} \xi(f) d\mu(\xi) = -i \frac{d}{dt} \Lambda(t \cdot f) \Big|_{t = 0}   \mbox{ and }
\int_{\sA_o'(X)} \xi(f)^2 d\mu(\xi) = - \frac{d^2}{dt^2} \Lambda(t \cdot f) \Big|_{t = 0} \, , 
\end{align}
where $\Lambda$ is the positive-definite characteristic function of $\mu$. We call a random $\sA'_o$-distribution \emph{$\sA_o$-square-integrable} if the complex-valued generalized linear statistics $\bS f$, $f \in \sA(X)$, have finite second moment. In that case, the $G$-equivariant map $\alpha_0 : \sA(G) \rightarrow L^2(\sA_o(X), \mu)$ given by
\begin{align*}
\alpha_0(\varphi)(\xi) = \bS(P_K\varphi)(\xi) - \Emu(\bS(P_K\varphi)) = \xi(P_K\varphi) + i \frac{d}{dt} \Lambda(P_K(t\cdot \varphi))\Big|_{t = 0} 
\end{align*}
satisfies the conditions of Theorem \ref{TheoremSpectralMeasureForAlgebraGMaps}, so there is a unique spectral measure $\sigma_{\mu} = \sigma_{\alpha_0} \in \Radonplus(\cS^+)$ such that
\begin{align*}
\Varmu(\bS f) = \norm{\alpha_0(\varphi_f)}_{L^2(\mu)}^2 = \int_{\cS^+} |\hat{\varphi}_f(\omega)|^2 d\sigma_{\mu}(\omega)
\end{align*}
for all $K$-invariant complex-valued $f \in \sA(X)$. 

\subsection{Gaussian random distributions on commutative spaces}
\label{Gaussian random distributions on commutative spaces}

In this Subsection we, roughly speaking, aim to prove a converse of Theorem \ref{Theorem1} for Gaussian random distributions. For the statement, we let $\sA_o(G), \sA_o(X)$ be function spaces as described in Subsection \ref{Test function spaces}. For the statement we recall the bi-$K$-averaging operation
\begin{align*}
\varphi^{\natural}(g) = \int_{K \times K} \varphi(k_1 g k_2) dm_K^{\otimes 2}(k_1, k_2) \, , \quad \varphi \in \sA_o(G) \, . 
\end{align*}

\begin{theorem}
Let $\sigma \in \Radonplus(\cS^+)$ and assume that the linear functional 
\begin{align*}
b_{\sigma}(\varphi) = \int_{\cS^+} \hat{\varphi}^{\natural}(\omega) d\sigma(\omega) \, , \quad \varphi \in \sA_o(G)
\end{align*}
is continuous. Then there is a unique $G$-invariant Gaussian probability measure $\mu_{\sigma}$ on $\sA_o'(X)$ such that 
\begin{align*}
\Var_{\mu_{\sigma}}(\bS f) = \int_{\cS^+} |\hat{\varphi}_f(\omega)|^2 d\sigma(\omega) 
\end{align*}
for all $K$-invariant $f \in \sA_o(X)$.
\end{theorem}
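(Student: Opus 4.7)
The strategy is to construct a centered Gaussian characteristic function $\Lambda_\sigma$ on $\sA_o(X)$ from the spectral measure $\sigma$ and invoke the Bochner--Minlos theorem. For each $\omega \in \cS^+$ let $(\pi_\omega, \cH_\omega, v_\omega)$ denote the associated GNS--triple, so $\omega(g) = \langle \pi_\omega(g) v_\omega, v_\omega \rangle_{\cH_\omega}$, and set
\[
Q(f) := \int_{\cS^+} \| \pi_\omega(\varphi_f) v_\omega \|_{\cH_\omega}^2 \, d\sigma(\omega), \qquad \Lambda_\sigma(f) := \exp\!\left( -\tfrac{1}{2} Q(f) \right).
\]
A short computation using $\pi_\omega(\varphi^*) = \pi_\omega(\varphi)^*$ identifies $\| \pi_\omega(\varphi_f) v_\omega \|^2 = \widehat{\check{\varphi}_f * \varphi_f}(\omega)$, so that $Q(f) = b_\sigma(\check{\varphi}_f * \varphi_f)$ once one notes that $\check{\varphi}_f * \varphi_f$ is bi-$K$-invariant: right-$K$-invariance of $\varphi_f$ makes $\check{\varphi}_f$ left-$K$-invariant, and convolution inherits invariance from both factors. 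Continuity of $Q$, and hence of $\Lambda_\sigma$, on $\sA_o(X)$ then follows from the assumed continuity of $b_\sigma$ together with continuity of the quadratic map $f \mapsto \check{\varphi}_f * \varphi_f$ into $\sA_o(G)$, a consequence of the nuclear Fr\'echet $*$-algebra structure.

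Three properties of $Q$ now need to be checked. Positive semi-definiteness is manifest from the spectral formula: for real scalars $c_i$ and $f_i \in \sA_o(X)$,
\[
Q\!\left( \sum_i c_i f_i \right) = \int_{\cS^+} \Big\| \sum_i c_i \pi_\omega(\varphi_{f_i}) v_\omega \Big\|^2 d\sigma(\omega) \geq 0,
\]
so the polarization $B(f_1, f_2) = \tfrac{1}{4}(Q(f_1 + f_2) - Q(f_1 - f_2))$ is a positive semi-definite symmetric bilinear form. The $G$-invariance $Q(\lambda_X(g_0) f) = Q(f)$ reduces to the convolution identity $\check{\lambda_G(g_0) \varphi_f} * \lambda_G(g_0) \varphi_f = \check{\varphi}_f * \varphi_f$, which I verify by substituting $l = g_0^{-1} k^{-1}$ in the defining integral. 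Positive-definiteness of $\Lambda_\sigma$ then follows by writing
\[
e^{ -\tfrac{1}{2} Q(f_i - f_j) } = e^{ -\tfrac{1}{2} Q(f_i) } \, e^{ -\tfrac{1}{2} Q(f_j) } \, e^{ B(f_i, f_j) }
\]
and noting that $(i, j) \mapsto e^{B(f_i, f_j)}$ is positive-definite by the Schur product theorem applied term-by-term to $e^B = \sum_{n \geq 0} B^n/n!$.

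Bochner--Minlos (Theorem \ref{TheoremBochnerMinlos}) now yields a unique probability measure $\mu_\sigma$ on $\sA_o'(X)$ with characteristic function $\Lambda_\sigma$. The Gaussian form of $\Lambda_\sigma$ together with Equation \ref{EquationFirstAndSecondMomentOfRandomDistribution} identify $\mu_\sigma$ as centered Gaussian with $\Var_{\mu_\sigma}(\bS f) = Q(f)$, and $G$-invariance of $\Lambda_\sigma$ forces $(g_0)_* \mu_\sigma = \mu_\sigma$ by uniqueness. For $K$-invariant $f$, the function $\varphi_f$ is bi-$K$-invariant, so $\pi_\omega(\varphi_f) v_\omega \in \cH_\omega^K = \C v_\omega$ and
\[
\| \pi_\omega(\varphi_f) v_\omega \|^2 = |\langle \pi_\omega(\varphi_f) v_\omega, v_\omega\rangle|^2 = |\hat{\varphi}_f(\omega)|^2,
\]
giving the desired formula $\Var_{\mu_\sigma}(\bS f) = \int_{\cS^+} |\hat{\varphi}_f|^2 \, d\sigma$. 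The main subtlety I expect is precisely the $G$-invariance step: the naive identity $|\widehat{\lambda_G(g_0)\varphi_f}|^2 = |\hat{\varphi}_f|^2$ fails for general $\varphi_f$, since one has $\widehat{\lambda_G(g_0) \varphi} = \omega(g_0^{-1}) \hat{\varphi}$ only for bi-$K$-invariant $\varphi$ and $|\omega(g_0^{-1})|$ is typically strictly less than $1$, so invariance must be extracted from the convolution identity above rather than from spectral symmetries.
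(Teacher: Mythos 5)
Your proof is correct and follows essentially the same route as the paper: both construct the Gaussian characteristic functional $\exp\bigl(-\tfrac{1}{2}\, b_{\sigma}(\check{\varphi}_f * \varphi_f)\bigr)$, verify positive-definiteness and $G$-invariance, invoke Bochner--Minlos, and read off the variance from the second derivative of the characteristic function. The only cosmetic differences are that you apply Bochner--Minlos directly on $\sA_o(X)$ (the paper works on $\sA_o(G)$ and then restricts to functions of the form $\varphi_f$ to push the measure down to $\sA_o'(X)$), and you establish positivity of the quadratic form via the GNS picture $\|\pi_{\omega}(\varphi_f)v_{\omega}\|^2$ rather than the paper's Fubini computation against the positive-definite spherical functions, while supplying the Schur-product detail for positive-definiteness of the Gaussian that the paper leaves to the reader.
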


\begin{remark}
This variance formula readily extends to complex-valued $K$-invariant functions $f \in \sA(X)$.
\end{remark}

To prove this Theorem we first construct a positive-definite Gaussian function $\Lambda_{\sigma} : \sA_o(G) \rightarrow \C$ such that $\Lambda_{\sigma}(0) = 1$ from $b_{\sigma}$ and invoke the Bochner-Minlos Theorem to get an invariant probability measure on $\sA_o'(G)$. After that we can push said invariant probability measure down to an invariant probability measure on $\sA_o'(X)$ and verify the formula for the variance.

\begin{proof}
We first note that since $\sigma$ is a positive measure on the positive-definite spherical functions, $b_{\sigma}$ defines a positive-definite continuous $\sA_o(G)$-distribution on $G$. To see this we apply Fubini, 
\begin{align*}
b_{\sigma}(\check{\varphi} * \varphi) &= \int_{\cS^+} \hat{(\check{\varphi} * \varphi)^{\natural}}(\omega) d\sigma(\omega) \\
&= \int_{K \times K} \int_{\cS^+} \Big( \int_G (\check{\varphi} * \varphi)(k_1 g k_2) \omega(g^{-1}) dm_G(g) \Big) d\sigma(\omega) dm_K^{\otimes 2}(k_1, k_2) \\
&= \int_{\cS^+} \Big( \int_G (\check{\varphi} * \varphi)(g) \omega(g^{-1}) dm_G(g) \Big) d\sigma(\omega)\geq 0 \, , \quad \forall \, \varphi \in \sA_o(G) \, . 
\end{align*}
This allows us to define a positive-definite continuous bilinear form $B_{\sigma}$ on $\sA_o(G)$ by $B_{\sigma}(\varphi_1, \varphi_2) = b_{\sigma}(\check{\varphi}_2 * \varphi_1)$. Note that
\begin{align*}
B_{\sigma}(\lambda_G(g)\varphi_1, \lambda_G(g)\varphi_2) = B_{\sigma}(\varphi_1, \varphi_2)
\end{align*}
for all $g \in G$ since 
\begin{align*}
((\lambda_G(g)\varphi_2)^{\vee} * (\lambda_G(g)\varphi_1))(h) &= \int_G \varphi_2(g^{-1}\ell^{-1}) \varphi_2(g^{-1}\ell^{-1}h) dm_G(\ell) \\
&= \int_G \varphi_2(\ell^{-1}) \varphi_2(\ell^{-1}h) dm_G(\ell) = (\check{\varphi}_2 * \varphi_1)
\end{align*}
for all $\varphi_1, \varphi_2 \in \sA_o(G)$. With such a bilinear form we define a $\lambda_G$-invariant Gaussian function $\Lambda_{\sigma} : \sA_o(G) \rightarrow \C$ by
\begin{align*}
\Lambda_{\sigma}(\varphi) = \e^{-\frac{1}{2}B_{\sigma}(\varphi, \varphi)} \, , \quad \varphi \in \sA_o(G) \, . 
\end{align*}
One readily checks that $\Lambda_{\sigma}$ is positive-definite, $\Lambda_{\sigma}(0) = 1$, so by the Bochner-Minlos Theorem there is a unique $G$-invariant probability measure $\tilde{\mu}_{\sigma}$ on $\sA_o'(G)$ such that 
\begin{align*}
\e^{-\frac{1}{2}B_{\sigma}(\varphi, \varphi)} = \int_{\sA_o'(G)} \e^{i \zeta(\varphi)} d\tilde{\mu}_{\sigma}(\zeta) \, , \quad \forall \, \varphi \in \sA_o(G) \, . 
\end{align*}
Restricting this equality to functions of the form $\varphi_f(g) = f(g.o)$ for $f \in \sA_o(X)$ provides us with a $G$-invariant probability measure $\mu_{\sigma}$ on $\sA_o'(X)$ satisfying 
\begin{align*}
\e^{-\frac{1}{2}B_{\sigma}(\varphi_f, \varphi_f)} = \int_{\sA_o'(X)} \e^{i \xi(f)} d\mu_{\sigma}(\xi) \, , \quad f \in \sA_o(X) \, .  
\end{align*}
Since this expression remains positive-definite on $\sA_o(X)$, the measure $\mu_{\sigma}$ is unique by the Bochner-Minlos Theorem. Note also that since $\mu_{\sigma}$ is Gaussian, it has well-defined moments of all orders. Finally, the map 
\begin{align*}
\R \ni t \longmapsto \e^{-\frac{t^2}{2}B_{\sigma}(\varphi_f, \varphi_f)} \in \R_{> 0}
\end{align*}
is $C^2$ for every $f \in \sA_o(X)$, so we can compute the $\mu_{\sigma}$-expectation and variance of linear statistics:
\begin{align*}
\bE_{\mu_{\sigma}}(\bS f) = \int_{\sA_o'(X)} \xi(f) d\mu_{\sigma}(\xi) = -i \frac{d}{dt} \e^{-\frac{t^2}{2}B_{\sigma}(\varphi_f, \varphi_f)} \Big|_{t = 0} = 0 \, , 
\end{align*}
and 
\begin{align*}
\Var_{\mu_{\sigma}}(\bS f) &= \bE_{\mu_{\sigma}}((\bS f)^2) = \int_{\sA_o'(X)} \xi(f)^2 d\mu(\xi) = - \frac{d^2}{dt^2} \e^{-\frac{t^2}{2}B_{\sigma}(\varphi_f, \varphi_f)}\Big|_{t = 0} \\
&= B_{\sigma}(\varphi_f, \varphi_f) = \int_{\cS^+} \hat{(\check{\varphi}_f * \varphi_f)}(\omega) d\sigma(\omega) \, . 
\end{align*}
In particular, if $f \in \sA_o(X)$ is $K$-invariant, then
\begin{align*}
\Var_{\mu_{\sigma}}(\bS f) = \int_{\cS^+} |\hat{\varphi}_f(\omega)|^2 d\sigma(\omega) \, . 
\end{align*}
\end{proof}

\subsection{Heat kernel hyperuniformity on Euclidean and real hyperbolic spaces}
\label{Heat kernel hyperuniformity on Euclidean and real hyperbolic spaces}

\subsubsection{Spectral hyperuniformity}

\textbf{The Euclidean setting}: The positive-definite spherical functions on $\R^d$ are precisely the characters $x \mapsto \e^{-i \langle x, y \rangle}$ with $y \in \R^d$, and the Bartlett spectral measure $\sigma_{\mu}$ of a translation invariant locally square-integrable distribution can be shown to be a positive tempered measure using the Paley-Wiener Theorem. Thus we restrict our attention to translation invariant random tempered distributions $\mu$ on $\R^d$. 
%In particular, translation invariant locally square-integrable random measures on $\R^d$ are almost surely tempered measures on $\R^d$, falling into the same framework. 
Spectral hyperuniformity of a random (tempered) distribution is defined as follows.

\begin{definition}[Euclidean spectral hyperuniformity]
A translation invariant locally square-integrable random (tempered) distribution is \emph{spectrally hyperuniform} if 
\begin{align*}
\lim_{\varepsilon \rightarrow 0^+} \frac{\sigma_{\mu}(B_{\varepsilon}(0))}{\Vol_{\R^d}(B_{\varepsilon}(0))} = 0 \, . 
\end{align*}
\end{definition}
This definition equivalent to that of geometric hyperuniformity when $\mu$ is a translation invariant locally square-integrable random measure on $\R^d$, see \cite[Prop. 3.3]{Bjorklund2022HyperuniformityAN}. If we let $\sigma_{\mu}^{\rad}$ denote the radial average of $\sigma_{\mu}$ on $(0, +\infty)$, defined by
\begin{align*}
\int_0^{\infty} F(\lambda) d\sigma_{\mu}^{\rad}(\lambda) = \int_{\R^d} F(\norm{y}) d\sigma_{\mu}(y) \, , \quad F \in \sS(\R)^{\even}
\end{align*}
then spectral hyperuniformity of $\mu$ can equivalently be formulated as 
\begin{align*}
\lim_{\varepsilon \rightarrow 0^+} \frac{\sigma^{\rad}_{\mu}((0, \varepsilon])}{\sigma_{\cP}((0, \varepsilon])} = 0 \, , 
\end{align*}
where $d\sigma_{\cP}(\lambda) = \Vol_{d-1}(S^{d-1})\lambda^{d-1} d\lambda$ is the Plancherel measure for the Euclidean motion group $\SO(d) \ltimes \R^d$ with $\sigma_{\cP}((0, \varepsilon]) \asymp \varepsilon^d$ as $\varepsilon \rightarrow 0^+$. 

\textbf{The real hyperbolic setting}: Consider real $d$-dimensional hyperbolic space $\H^d = G_d/K_d$, where again $(G_d, K_d) = (\SO^{\circ}(1, d), \SO(d))$. The positive-definite spherical functions $\cS^+$ for $(G_d, K_d)$ can be parametrized by the subset
\begin{align*}
\Omega_d = i(0, \tfrac{d-1}{2}] \cup [0, +\infty) \subset \C \, , 
\end{align*}
where the two intervals parametrize equivalence classes of the complementary- and principal series spherical representations respectively. The homeomorphism is $\Omega_d \rightarrow \cS^+$, $\lambda \mapsto \omega_{\lambda}$, where
\begin{align*}
\omega_{\lambda}(g) = \frac{2^{\frac{d-1}{2}}\Gamma(\frac{d}{2})}{\sqrt{\pi} \, \Gamma(\frac{d - 1}{2})} \sinh(\ell(g))^{2 - d}\int_0^{\ell(g)} (\cosh(\ell(g)) - \cosh(s))^{\frac{d-3}{2}} \cos(\lambda s) ds \, .
\end{align*}
Note that there Harish-Chandra $\Xi$-function from Equation \ref{EqHarishChandraXiFunction} is $\Xi = \omega_0$. Using a Paley-Wiener type Theorem due to Trombi-Varadarajan for Harish-Chandra $L^p$-spaces, one can similarly to the Euclidean case prove that Bartlett spectral measures of invariant locally square-integrable random distributions are ''tempered''. To state the Theorem, we consider for open strips $T_r = i(-r, r) \times \R \subset \C$ the \emph{$r$-Paley-Wiener space} $\PW(T_r)$ of rapidly decaying holomorphic functions on $T_r$ that extend continuously to the closure $\overline{T}_r \subset \C$. A proof of the following can be found in \cite[Theorem 1]{Anker1991TheSF}.

\begin{theorem}[Trombi-Varadarajan, real hyperbolic space]
\label{TheoremTrombiVaradarajanAnkerPaleyWiener}
The spherical transform extends to a $*$-algebra isomorphism $\sC^p(G_d, K_d) \rightarrow \PW(T_{2/p - 1})^{\even}$.
\end{theorem}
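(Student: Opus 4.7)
The plan is to reduce the statement to a one-dimensional Euclidean Paley--Wiener theorem via the Abel transform, exploiting the fact that $G_d = \SO^\circ(1,d)$ has real rank one. Concretely, using an Iwasawa decomposition $G_d = KAN$ with $A = \{a_s : s \in \R\}$ and $\rho = (d-1)/2$, I would define the Abel transform
\begin{align*}
\cA\varphi(s) = \e^{\rho s}\int_N \varphi(a_s n)\, dn, \quad \varphi \in \sC^p(G_d,K_d),
\end{align*}
and observe the factorization $\hat{\varphi}(\lambda) = \cF(\cA\varphi)(\lambda)$, where $\cF$ denotes the ordinary Fourier transform on $\R$. This factorization follows from the integral representation of $\omega_\lambda$ on $G_d$ (together with the formula for $\rho$), which is nothing but the rank-one instance of Harish-Chandra's integral for zonal spherical functions. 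Because the Weyl group of the pair $(G_d,K_d)$ is $\Z/2\Z$ acting by $\lambda\mapsto -\lambda$, $\cA\varphi$ is automatically even on $\R$ whenever $\varphi$ is bi-$K_d$-invariant.

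The core analytic work is to show that $\cA$ is a topological isomorphism from $\sC^p(G_d,K_d)$ onto an explicit weighted even Schwartz space $\sC^p_{\rho}(\R)^{\even}$ on the line, consisting of smooth even functions $F$ with $\sup_{s\in\R}(1+|s|)^n \e^{(2/p-1)\rho|s|}|F^{(k)}(s)| < +\infty$ for all $n,k$. In one direction, the Harish-Chandra $\Xi$-estimate together with the asymptotic $\Xi(a_s) \asymp (1+|s|)\e^{-\rho|s|}$ and the formula for the $N$-integral yields the necessary weighted decay of $\cA\varphi$ and its derivatives from the seminorm bounds on $\varphi$. In the other direction, inversion of the Abel transform in rank one can be performed explicitly via a Weyl-type fractional integral (a half-integer power of $-\partial_s^2$ depending on the parity of $d$); one then verifies that this inverse carries $\sC^p_{\rho}(\R)^{\even}$ back into $\sC^p(G_d,K_d)$, again by $\Xi$-function estimates and the behavior of bi-invariant differential operators on $G_d$ under the Abel transform.

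Once the isomorphism $\cA : \sC^p(G_d,K_d)\xrightarrow{\sim} \sC^p_{\rho}(\R)^{\even}$ is in place, the classical Paley--Wiener theorem on $\R$ for $L^p$-Schwartz functions gives $\cF : \sC^p_{\rho}(\R)^{\even}\xrightarrow{\sim}\PW(T_{2/p-1})^{\even}$: the weight $\e^{-(2/p-1)\rho|s|}$ precisely corresponds, under $\cF$, to holomorphic continuation of $\hat{F}$ to the open strip $T_{2/p-1} = i(-(2/p-1),\,2/p-1)\times \R$ with rapid decay on horizontal lines inside the closure. Composing with $\cA$ yields the required topological isomorphism of Frech\'et spaces, and the $*$-algebra property follows from Equation \ref{EqSphericalTransformConvolutionToProductFormula} together with $\hat{\varphi^*} = \overline{\hat{\varphi}}$.

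The main obstacle I anticipate is establishing the topological bijectivity of $\cA$ between the two Schwartz-type spaces, in particular the continuity of $\cA^{-1}$. This requires uniform control of the fractional-integral inverse on weighted Schwartz functions and a careful translation of the Euclidean seminorms back into the $\Xi$-weighted seminorms on $G_d$; the most delicate point is handling left- and right-invariant differential operators $D_l, D_r$ on $G_d$ after Abel-transforming, since they do not pass through $\cA$ as simple differential operators on $\R$ but only modulo manageable error terms absorbed by the $\Xi$-factors. Once these technical estimates are in hand, the rest of the proof is formal.
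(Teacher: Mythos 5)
The paper does not actually prove this statement: it is quoted verbatim from Anker \cite[Theorem 1]{Anker1991TheSF}, so there is no internal proof to compare against. Your outline is, in structure, exactly the strategy of the cited proof: factor the spherical transform as $\hat{\varphi} = \cF \circ \cA$ through the Abel transform, use the rank-one Weyl group to get evenness, and reduce to the classical Paley--Wiener theorem for weighted Schwartz functions on $\R$, with the multiplicativity of the spherical transform coming from Equation \ref{EqSphericalTransformConvolutionToProductFormula}. All of these reductions are correct.

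The genuine gap is that the single step you defer --- that $\cA$ is a \emph{topological isomorphism} of $\sC^p(G_d,K_d)$ onto the weighted even Schwartz space, with continuous inverse --- is not a technicality but is essentially the entire content of the theorem; everything else in your argument is soft. In particular: (a) surjectivity onto $\PW(T_{2/p-1})^{\even}$ requires producing a preimage in $\sC^p(G_d,K_d)$ for an arbitrary even Paley--Wiener function, which forces you through the explicit Abel inversion; for even $d$ this is a genuinely fractional (half-integer) power of $-\partial_s^2$, and controlling how it interacts with the weights $(1+|s|)^n\e^{(2/p-1)\rho|s|}$ and with the left- and right-invariant operators $D_l, D_r$ is where the known proofs spend all their effort --- the phrase ``error terms absorbed by the $\Xi$-factors'' is doing a large amount of unverified work; (b) continuity of $\cA^{-1}$ does not follow from bijectivity by an open-mapping argument alone without first knowing both spaces are Fr\'echet and the map is continuous and bijective, so the weighted estimates cannot be sidestepped. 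Separately, be careful with the normalization: the decay weight $\e^{(2/p-1)\rho|s|}$ corresponds under $\cF$ to holomorphy in the strip of half-width $(2/p-1)\rho = (2/p-1)(d-1)/2$, not $2/p-1$; you should reconcile this with the paper's parametrization of $T_{2/p-1}$ (the paper itself uses $p = 4/(d+1)$ to reach half-width $(d-1)/2$, which is consistent only if the factor $\rho$ is absorbed into the spectral parameter). As it stands your proposal is a correct roadmap that reduces the theorem to an equivalent unproved assertion, namely Anker's isomorphism theorem for the Abel transform.
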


Since $\Omega_d \backslash \{i(d-1)/2\} \subset T_{(d-1)/2}$, this Theorem implies that the Bartlett spectral measure $\sigma_{\mu}$ is tempered on $\Omega_d$ and that we can restrict our attention to random distributions $\mu$ that are supported on the strong dual of the Harish-Chandra space $\sC^{4/(d+1)}(G_d)$. 
%Invariant locally square-integrable random measures on $\bH^d$ can in this sense be regarded as random positive $\sC^{4/(d+1)}$-distributions on $\H^d$. 
The following definition of spectral hyperuniformity was introduced for random measures on $\bH^d$ in \cite[Definition 1.5]{Björklund2024HyperuniformityOfRandomMeasuresOnEuclideanAndHyperbolicSpaces}.

\begin{definition}[Real hyperbolic spectral hyperuniformity]
\label{DefHyperbolicSpectralHyperuniformity}
A $G_d$-invariant locally square-integrable random distribution $\mu$ on $\H^d$ is \emph{spectrally hyperuniform} if
\begin{align*}
\sigma_{\mu}(i[0, \tfrac{d-1}{2}]) = 0 \quad \mbox{ and } \quad \lim_{\varepsilon \rightarrow 0^+} \frac{\sigma_{\mu}((0, \varepsilon])}{\sigma_{\cP}((0, \varepsilon])} = 0 \, . 
\end{align*}
\end{definition}
Here, the spherical Plancherel measure is $d\sigma_{\cP}(\lambda) = \chi_{(0, +\infty)}(\lambda) |c_d(\lambda)|^{-2} d\lambda$ on $\Omega_d$, where $c_d$ denotes the Harish-Chandra $c$-function for $G_d$, and $\sigma_{\cP}((0, \varepsilon]) \asymp \varepsilon^3$ as $\varepsilon \rightarrow 0^+$. We note that a necessary condition for spectral hyperuniformity is $\supp(\sigma_{\mu}) \subset (0, +\infty)$, which by Theorem \ref{TheoremTrombiVaradarajanAnkerPaleyWiener} implies that $\sigma_{\mu}$ is a tempered positive measure on $(0, +\infty)$. A class of random distributions $\mu$ that satisfy this are random $\sC^2$-distributions, in other words almost surely tempered in the sense of Harish-Chandra.

\subsubsection{A general rank one criteria for spectral hyperuniformity}

To summarize the previous Subsubsection, spectral hyperuniformity of an invariant locally square-integrable random distribution $\mu$ on $X = \R^d, \H^d$ requires that the Bartlett spectral measure $\sigma_{\mu}$ is supported on $(0, +\infty)$ and that 
\begin{align*}
\limsup_{\varepsilon \rightarrow 0^+} \frac{\sigma_{\mu}((0, \varepsilon])}{\varepsilon^{\alpha}} = 0
\end{align*}
where $\alpha = d$ for $X = \R^d$ and $\alpha = 3$ for $X = \H^d$. We prove the following reformulation of this property.

\begin{proposition}
\label{PropTemperedHyperuniformity}
Let $F : \R \rightarrow \R$ is an even Schwartz function which does not vanish on $[-1, 1]$ and let $\sigma \in \Radonplus((0, +\infty))$ be a tempered measure. Then 
\begin{align*}
\lim_{\varepsilon \rightarrow 0^+} \frac{\sigma((0, \varepsilon])}{\varepsilon^{\alpha}} = 0
\end{align*}
if and only if
\begin{align*}
\lim_{s \rightarrow +\infty} s^{\alpha} \int_0^{\infty} F(s\lambda)^2 d\sigma(\lambda) = 0 \, . 
\end{align*}
\end{proposition}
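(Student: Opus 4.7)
The plan is to split the integral $\int_0^\infty F(s\lambda)^2\,d\sigma(\lambda)$ at the threshold $\lambda=1/s$ and handle the two halves with complementary tools: the hypothesis on $\sigma$ at the origin for the bulk, and the Schwartz decay of $F$ together with temperedness of $\sigma$ for the tail.

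\medskip

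The reverse implication is immediate. By hypothesis there is $c=\min_{[-1,1]} F^2 > 0$. For any $\varepsilon > 0$, set $s = 1/\varepsilon$; then for $\lambda \in (0,1/s]$ we have $s\lambda \in (0,1]$, hence $F(s\lambda)^2 \geq c$, and so
\[
s^{\alpha}\int_0^{\infty} F(s\lambda)^2\,d\sigma(\lambda) \;\geq\; c\, s^{\alpha} \sigma\bigl((0,1/s]\bigr) \;=\; c\,\frac{\sigma((0,\varepsilon])}{\varepsilon^{\alpha}}.
\]
Letting $s\to\infty$ (equivalently $\varepsilon\to 0^+$) yields the claim.

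\medskip

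For the forward implication, fix $\delta > 0$ and choose $\varepsilon_0$ so that $\sigma((0,\varepsilon])/\varepsilon^{\alpha} < \delta$ for $\varepsilon < \varepsilon_0$. Write
\[
s^{\alpha}\int_0^{\infty} F(s\lambda)^2\,d\sigma(\lambda) = s^{\alpha}\int_0^{1/s} + s^{\alpha}\int_{1/s}^{\infty} =: \mathrm{I}(s) + \mathrm{II}(s).
\]
For $\mathrm{I}(s)$, bound $F^2 \leq \|F\|_\infty^2$ and get $\mathrm{I}(s) \leq \|F\|_\infty^2\, s^{\alpha}\sigma((0,1/s]) = \|F\|_\infty^2\,\sigma((0,1/s])/(1/s)^{\alpha} \leq \|F\|_\infty^2 \delta$ as soon as $1/s < \varepsilon_0$.

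\medskip

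For $\mathrm{II}(s)$, use Schwartz decay $F(u)^2 \leq C_N(1+|u|)^{-2N} \leq C_N |u|^{-2N}$ for $|u|\geq 1$ with $N$ chosen later, so
\[
\mathrm{II}(s)\;\leq\; C_N s^{\alpha - 2N}\!\int_{1/s}^{\infty}\!\lambda^{-2N}\,d\sigma(\lambda).
\]
Split the integral over $[1/s,1]$ and $[1,\infty)$. On $[1,\infty)$, temperedness gives $\int_1^\infty \lambda^{-2N}\,d\sigma(\lambda) < \infty$ once $2N \geq M$ where $M$ is the order of temperedness, so the $[1,\infty)$-contribution is $O(s^{\alpha - 2N})\to 0$ provided $2N > \alpha$.

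For the dyadic range $[1/s,1]$, write $[1/s,1]=\bigcup_{k=0}^{k_0}[2^k/s,2^{k+1}/s)$ with $k_0=\lfloor\log_2 s\rfloor$. On each annulus $\lambda^{-2N}\leq s^{2N}2^{-2Nk}$, and provided $s$ is large enough that $2/s < \varepsilon_0$ the hypothesis yields $\sigma((0,2^{k+1}/s]) \leq \delta (2^{k+1}/s)^{\alpha}$ for all $k\le k_0$. Summing,
\[
s^{\alpha-2N}\!\int_{1/s}^{1}\!\lambda^{-2N}\,d\sigma(\lambda) \;\leq\; \delta\,2^{\alpha}\sum_{k=0}^{k_0} 2^{k(\alpha-2N)},
\]
which is bounded by a constant multiple of $\delta$ once $2N > \alpha$. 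Picking $N$ with $2N > \max(\alpha,M)$, we conclude that $\mathrm{II}(s) \leq C\delta + o(1)$ as $s\to\infty$, hence $\limsup_s s^{\alpha}\!\int F(s\lambda)^2 d\sigma \leq (\|F\|_\infty^2+C)\delta$. Since $\delta$ is arbitrary, the limit is $0$.

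\medskip

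The only genuine obstacle is the intermediate range $\lambda \in [1/s,1]$, where $\lambda^{-2N}$ is very large but $F(s\lambda)^2$ is not yet negligible; the dyadic decomposition is what ties together the smallness of $\sigma$ near $0$ and the polynomial decay of $F^2$ to control this piece uniformly in $s$.
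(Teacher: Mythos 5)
Your route is genuinely different from the paper's. The paper first rewrites $\int_0^\infty F(s\lambda)^2\,d\sigma(\lambda)$ via a layer-cake/integration-by-parts identity as $s\int_0^\infty \psi(sr)\theta(r)\,dr$ with $\theta(r)=\sigma((0,r])$ and $\psi=-2F'F$, and then invokes a separate technical lemma that splits the $r$-integral into three ranges $[0,r_\varepsilon^{1-\gamma}]$, $[r_\varepsilon^{1-\gamma},M]$, $[M,\infty)$ using an auxiliary interpolation exponent $\gamma\in(0,1)$. Your direct split at $\lambda=1/s$ followed by a dyadic decomposition of the intermediate range $[1/s,1]$ reaches the same conclusion more concretely, at the cost of being specific to the function $F$ rather than packaged as a reusable lemma. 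The reverse implication is essentially identical in both treatments (and your constant $\min_{[-1,1]}F^2$ is the right one; the $\sup$ appearing at the corresponding point of the paper's proof is evidently a typo).

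There is, however, one step that fails as written. In the dyadic range you assert that once $2/s<\varepsilon_0$ the hypothesis gives $\sigma((0,2^{k+1}/s])\le\delta(2^{k+1}/s)^\alpha$ for \emph{all} $k\le k_0=\lfloor\log_2 s\rfloor$. That holds only for those $k$ with $2^{k+1}/s<\varepsilon_0$; for $k$ near $k_0$ the endpoint $2^{k+1}/s$ is close to $2$, far above $\varepsilon_0$, and the smallness hypothesis says nothing there. The repair is routine: the annuli with $2^{k+1}/s\ge\varepsilon_0$ are all contained in $[\varepsilon_0/2,1]$, on which $\lambda^{-2N}\le(\varepsilon_0/2)^{-2N}$ and $\sigma([\varepsilon_0/2,1])<\infty$ since $\sigma$ is Radon on $(0,+\infty)$; hence their total contribution to $\mathrm{II}(s)$ is at most $C_N(\varepsilon_0/2)^{-2N}\sigma([\varepsilon_0/2,1])\,s^{\alpha-2N}$, which tends to $0$ as $s\to\infty$ for fixed $\delta$ (hence fixed $\varepsilon_0$), exactly as for the $[1,\infty)$ tail. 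With that one extra term added to your final estimate the argument is complete.
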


The heart of the proof of this Proposition lies in the following technical Lemma.

\begin{lemma}
\label{LemmaHeatKernelHyperuniformityConditions}
Suppose that there are constants $\alpha, \beta > 0$, $\gamma \in (0, 1)$, $A, B, C \geq 0$ and Borel functions $ \theta, \psi : [0, +\infty) \rightarrow [0, +\infty)$ satisfying

(i) $\theta$ is increasing and such that 
\begin{itemize}
    \item for every $\varepsilon > 0$ there is an $r_{\varepsilon} \in (0, 1)$ with $r_{\varepsilon} \rightarrow 0$ as $\varepsilon \rightarrow 0^+$, and such that $\theta(r) \leq A \varepsilon r^{\alpha}$ for all $r \in [0, r_{\varepsilon}^{1 - \gamma}]$,
    \item there is a constant $M > 0$ such that $\theta(r) \leq B r^{\beta}$ for all $r \geq M$.
\end{itemize}

(ii) $\psi$ is such that
\begin{enumerate}
    \item $\int_0^{\infty} r^{\alpha} \psi(r) dr < +\infty$,
    \item $R^{\alpha/\gamma} \int_R^{\infty} \psi(r) dr \rightarrow 0$ as $R \rightarrow +\infty$,
    \item $R^{\alpha - \beta} \int_R^{\infty} r^{\beta} \psi(r) dr \rightarrow 0$ as $R \rightarrow +\infty$.
\end{enumerate}
Then
\begin{align*} 
r_{\varepsilon}^{-(\alpha + 1)} \int_0^{\infty} \psi(r/r_{\varepsilon}) \theta(r) dr \longrightarrow 0 \, , \quad \varepsilon \rightarrow 0^+  \, . 
\end{align*}
\end{lemma}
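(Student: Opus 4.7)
The plan is to rescale the integral by substituting $s = r/r_\varepsilon$ (so $dr = r_\varepsilon\,ds$), which transforms the quantity of interest into
\[
r_\varepsilon^{-(\alpha+1)} \int_0^\infty \psi(r/r_\varepsilon)\,\theta(r)\,dr \;=\; r_\varepsilon^{-\alpha} \int_0^\infty \psi(s)\,\theta(r_\varepsilon s)\,ds.
\]
On the right-hand side the behaviour of $\theta(r_\varepsilon s)$ is governed by two thresholds: the small-argument bound in (i) applies when $r_\varepsilon s \le r_\varepsilon^{1-\gamma}$, i.e.\ $s \le r_\varepsilon^{-\gamma}$; the large-argument bound applies when $r_\varepsilon s \ge M$, i.e.\ $s \ge M/r_\varepsilon$. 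Since $r_\varepsilon \to 0$, we eventually have $r_\varepsilon^{-\gamma} < M/r_\varepsilon$, so the natural strategy is to split the $s$-integral into the three intervals $[0,r_\varepsilon^{-\gamma}]$, $[r_\varepsilon^{-\gamma}, M/r_\varepsilon]$ and $[M/r_\varepsilon, \infty)$, and to bound the integral on each piece using a different property of $\theta$ and a matching integrability condition on $\psi$.

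On the first piece, I would invoke $\theta(r_\varepsilon s) \le A\varepsilon (r_\varepsilon s)^\alpha = A\varepsilon r_\varepsilon^\alpha s^\alpha$ from (i), yielding
\[
r_\varepsilon^{-\alpha} \int_0^{r_\varepsilon^{-\gamma}} \psi(s)\,\theta(r_\varepsilon s)\,ds \;\le\; A\varepsilon \int_0^\infty s^\alpha \psi(s)\,ds,
\]
which tends to $0$ by condition (ii)(1). On the middle piece, I only have monotonicity available, giving $\theta(r_\varepsilon s) \le \theta(M) \le B M^\beta$ (the last inequality follows from $\theta$ being increasing and bounded by $Br^\beta$ for $r$ slightly larger than $M$). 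Writing $R := r_\varepsilon^{-\gamma}$, so $r_\varepsilon^{-\alpha} = R^{\alpha/\gamma}$, the contribution is at most
\[
\theta(M)\, R^{\alpha/\gamma} \int_R^\infty \psi(s)\,ds,
\]
which tends to $0$ by (ii)(2) as $R \to \infty$. On the third piece, (i) gives $\theta(r_\varepsilon s) \le B(r_\varepsilon s)^\beta$; with $R' := M/r_\varepsilon$, so $r_\varepsilon^{\beta-\alpha} = M^{\beta-\alpha} (R')^{\alpha-\beta}$, the contribution is
\[
B r_\varepsilon^{\beta-\alpha} \int_{R'}^\infty s^\beta \psi(s)\,ds \;=\; B M^{\beta-\alpha} (R')^{\alpha-\beta} \int_{R'}^\infty s^\beta \psi(s)\,ds,
\]
which tends to $0$ by (ii)(3). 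Summing the three pieces yields the desired limit.

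The structure of the argument is really just the observation that the three decay hypotheses on $\psi$ in (ii) are tailored exactly to the three regimes of $\theta$ after rescaling; the only slight subtlety is the middle interval, where the hypotheses do not provide an asymptotic formula for $\theta$, and one must fall back on monotonicity together with the precise exponent $\alpha/\gamma$ in (ii)(2) in order to absorb the factor $r_\varepsilon^{-\alpha}$. The main bookkeeping obstacle, therefore, is matching the three powers of $r_\varepsilon$ (namely $r_\varepsilon^0$, $r_\varepsilon^{-\alpha}$ and $r_\varepsilon^{\beta-\alpha}$) with the thresholds $r_\varepsilon^{-\gamma}$ and $M/r_\varepsilon$ so that conditions (ii)(1)--(ii)(3) are each applied with the correct value of $R$.
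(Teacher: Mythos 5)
Your proposal is correct and follows essentially the same route as the paper's proof: the same three-regime decomposition (small-argument bound on $\theta$, monotonicity on the middle interval, power bound at infinity), with each of the conditions (ii)(1)--(3) matched to its regime in the same way; the only cosmetic difference is that you perform the substitution $s = r/r_\varepsilon$ globally at the outset, whereas the paper splits first and substitutes within each piece. All the exponent bookkeeping (in particular $r_\varepsilon^{-\alpha} = R^{\alpha/\gamma}$ with $R = r_\varepsilon^{-\gamma}$, and $r_\varepsilon^{\beta-\alpha} = M^{\beta-\alpha}(R')^{\alpha-\beta}$ with $R' = M/r_\varepsilon$) checks out.
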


\begin{proof}
Let $I(\varepsilon) = \int_0^{\infty} \psi(r/r_{\varepsilon}) \theta(r) dr$ and split the integral as $I(\varepsilon) = I_1(\varepsilon) + I_2(\varepsilon) + I_3(\varepsilon)$, where
\begin{align*}
I_1(\varepsilon) &= \int_0^{r_{\varepsilon}^{1 - \gamma}} \psi(r/r_{\varepsilon}) \theta(r) dr \\
I_2(\varepsilon) &= \int_{r_{\varepsilon}^{1 - \gamma}}^{M} \psi(r/r_{\varepsilon}) \theta(r) dr \\
I_3(\varepsilon) &= \int_M^{\infty} \psi(r/r_{\varepsilon}) \theta(r) dr \, . 
\end{align*}
By the assumptions on $\theta$, we find that 
\begin{align*}
I_1(\varepsilon) &\leq A \varepsilon  \int_0^{r_{\varepsilon}^{1 - \gamma}} r^{\alpha} \psi(r/r_{\varepsilon}) dr \leq A \varepsilon r_{\varepsilon}^{\alpha + 1} \int_0^{\infty} r^{\alpha} \psi(r) dr \, , \\
I_2(\varepsilon) &\leq \theta(M) \int_{r_{\varepsilon}^{1 - \gamma}}^{M} \psi(r/r_{\varepsilon}) dr \leq \theta(M) r_{\varepsilon} \int_{r_{\varepsilon}^{-\gamma}}^{\infty} \psi(r) dr \\
I_3(\varepsilon) &\leq B \int_{M}^{\infty} r^{\beta} \psi(r/r_{\varepsilon}) dr = B r_{\varepsilon}^{\beta + 1} \int_{M r_{\varepsilon}^{-1}}^{\infty} r^{\beta} \psi(r) dr \, . 
\end{align*}
Dividing by $r_{\varepsilon}^{\alpha + 1}$ and using the assumptions made on $\psi$, we get that
\begin{align*}
r_{\varepsilon}^{-(\alpha + 1)}I(\varepsilon) \leq A \varepsilon \int_0^{\infty} r^{\alpha} \psi(r) dr &+ \theta(M) (r_{\varepsilon}^{-\gamma})^{\alpha/\gamma} \int_{r_{\varepsilon}^{-\gamma}}^{\infty} \psi(r) dr \\
&+  B M^{\beta - \alpha} (M r_{\varepsilon}^{-1})^{\alpha - \beta} \int_{M r_{\varepsilon}^{-1}}^{\infty} r^{\beta} \psi(r) dr \longrightarrow 0 
\end{align*}
as $\varepsilon \rightarrow 0^+$. 
\end{proof}
\begin{proof}[Proof of Proposition \ref{PropTemperedHyperuniformity}]
For the right implication, set $\theta(r) = \sigma((0, r])$ and \\ $\psi(r) = -2F'(r)F(r)$, so that 
\begin{align*}
\int_0^{\infty} |F(s\lambda)|^2 d\sigma(\lambda) &= \int_0^{\infty} \sigma\big(\big\{ \lambda \geq 0 : F(s\lambda)^2 \geq r \big\}\big) dr \\
&= -2 s \int_0^{\infty} F'(sr)F(sr) \sigma((0, r]) dr = s \int_0^{\infty} \psi(sr) \theta(r) dr \, . 
\end{align*}
Now, since $\sigma$ is tempered and satisfies $\sigma((0, \varepsilon]) = o(\varepsilon^{\alpha})$, then $\theta$ satisfies the two first conditions in Lemma \ref{LemmaHeatKernelHyperuniformityConditions} with $r_{\varepsilon} = s^{-1}$. Moreover, since $F$ is Schwartz, then $\psi = -2 F' F$ is Schwartz and thus $|\psi|$ satisfies the three latter conditions in Lemma \ref{LemmaHeatKernelHyperuniformityConditions}, so we get that
\begin{align*}
s^{\alpha} \int_0^{\infty} F(s\lambda)^2 d\sigma(\lambda) \leq (s^{-1})^{-(\alpha + 1)} \int_0^{\infty} |\psi(r/s^{-1})| \theta(r) dr \rightarrow 0 \, , \quad s \rightarrow +\infty
\end{align*}
as desired. 

Conversely, consider the lower bound
\begin{align*}
s^{\alpha} \int_0^{\infty} F(s\lambda)^2 d\sigma(\lambda) \geq s^{\alpha} \int_0^{1/s} F(s\lambda)^2 d\sigma(\lambda) \geq \sup_{t \in [0, 1]} F(t)^2 s^{\alpha} \sigma([0, s^{-1}]) \, . 
\end{align*}
If 
\begin{align*}
\lim_{s \rightarrow +\infty} s^{\alpha} \int_0^{\infty} F(s\lambda)^2 d\sigma(\lambda) = 0
\end{align*}
then setting $\varepsilon = s^{-1}$ yields
\begin{align*}
\limsup_{\varepsilon \rightarrow 0^+} \frac{\sigma((0, \varepsilon])}{\varepsilon^{\alpha}} \leq \frac{1}{\sup_{t \in [0, 1]} F(t)^2} \lim_{\varepsilon \rightarrow 0} \varepsilon^{-\alpha} \int_0^{\infty} F(\lambda/\varepsilon)^2 d\sigma(\lambda)  = 0 \, . 
\end{align*}
\end{proof}

We will next apply Proposition \ref{PropTemperedHyperuniformity} to the function $F(t) = \e^{-t^2}$ when proving the equivalence between spectral- and heat kernel hyperuniformity.

\subsubsection{Euclidean heat kernel hyperuniformity}

The Euclidean heat kernel is the family of functions $h_{\tau} : \R^d \rightarrow \R_{\geq 0}$ with $\tau > 0$ given by
\begin{align*}
h_{\tau}(x) = \frac{1}{(4\pi \tau)^{d/2}} \e^{-\frac{\norm{x}^2}{4\tau}} \, ,
\end{align*}
or equivalently defined by its Fourier transform as 
\begin{align*}
\hat{h}_{\tau}(y) = \frac{1}{(4\pi \tau)^{d/2}} \int_{\R^d} \e^{-\frac{\norm{x}^2}{4\tau}} \e^{-i \langle x, y \rangle} dx = \e^{-\tau\norm{y}^2} \, , \quad  y \in \R^d \, . 
\end{align*}

\begin{theorem}
\label{ThmEuclideanHeatKernelHyperuniformity}
An invariant locally square-integrable random tempered distribution $\mu$ on $\R^d$ is spectrally hyperuniform if and only if 
\begin{align*} 
%\label{EqHeatKernelHyperuniformity}
%
\lim_{\tau \rightarrow +\infty} \, \tau^{d/2} \Varmu(\bS h_{\tau}) = 0 \, .   
\end{align*}
\end{theorem}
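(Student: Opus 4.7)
The plan is to reduce the equivalence to Proposition~\ref{PropTemperedHyperuniformity} applied with the Gaussian $F(t) = e^{-t^2}$, which is even, Schwartz, and strictly positive on $[-1,1]$.

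First, I would apply the distributional extension of Theorem~\ref{Theorem1} described in Remark~\ref{RemarkSpectralMeasures} and developed in Subsection~\ref{Gaussian random distributions on commutative spaces}. Since $h_\tau \in \sS(\R^d)$ is radial (i.e.\ $K$-invariant for $K = \SO(d)$), it defines a valid $K$-invariant linear statistic on $\mu$, and
\begin{align*}
\Varmu(\bS h_\tau) = \int_{\cS^+} |\hat{\varphi}_{h_\tau}(\omega)|^2 \, d\sigma_\mu(\omega).
\end{align*}
For the Euclidean Gelfand pair $(\SO(d) \ltimes \R^d, \SO(d))$, the positive-definite spherical functions are $K$-averages of the characters $e^{-i\langle x,y\rangle}$, parametrized by $\lambda = \|y\| \in [0,+\infty)$, and the spherical transform of a radial Schwartz function is simply the radial profile of its Euclidean Fourier transform. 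A direct computation gives $\hat{h}_\tau(\omega_\lambda) = e^{-\tau\lambda^2}$, whence
\begin{align*}
\Varmu(\bS h_\tau) = \int_0^\infty e^{-2\tau\lambda^2} \, d\sigma_\mu(\lambda).
\end{align*}
Because $\mu$ is a random tempered distribution, $\sigma_\mu$ is a tempered positive Radon measure on $[0,+\infty)$, and Proposition~\ref{PropMassOfSpectralMeasureAtOneIsTheIntensity} applied to the discrepancy map $\alpha_0$ ensures $\sigma_\mu(\{0\}) = 0$, so the integral runs over $(0,+\infty)$.

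Next, I would substitute $s = \sqrt{\tau}$ to obtain
\begin{align*}
\tau^{d/2}\Varmu(\bS h_\tau) = s^d \int_0^\infty F(s\lambda)^2 \, d\sigma_\mu(\lambda),
\end{align*}
so Proposition~\ref{PropTemperedHyperuniformity} with $\alpha = d$ delivers
\begin{align*}
\lim_{\tau \to +\infty} \tau^{d/2}\Varmu(\bS h_\tau) = 0 \quad \iff \quad \lim_{\varepsilon \to 0^+} \frac{\sigma_\mu((0,\varepsilon])}{\varepsilon^d} = 0.
\end{align*}
Since the spherical Plancherel measure on $(0,+\infty)$ satisfies $\sigma_\cP((0,\varepsilon]) = \Vol_{d-1}(S^{d-1})\varepsilon^d/d$, the right-hand condition is precisely Euclidean spectral hyperuniformity, and the equivalence is established.

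The main obstacle is entirely absorbed by Proposition~\ref{PropTemperedHyperuniformity}, whose proof via Lemma~\ref{LemmaHeatKernelHyperuniformityConditions} handles the delicate interplay between Gaussian concentration near $\lambda = 0$ and the tempered growth of $\sigma_\mu$ at infinity. The easy direction is that $F(s\lambda)^2 \geq F(1)^2 \chi_{[0,1/s]}(\lambda)$, so heat-kernel decay forces small-radius decay of $\sigma_\mu$. The reverse direction is the delicate one: even though the Gaussian $F(s\lambda)^2$ is concentrated near zero, its tails still see the whole of $\sigma_\mu$, and one must partition $[0,+\infty)$ into a small-radius region (where spectral hyperuniformity is used), a compact intermediate region, and a far tail (controlled by the tempered estimate $\sigma((0,r])\leq Br^\beta$), which is exactly the tripartite decomposition $I_1 + I_2 + I_3$ in Lemma~\ref{LemmaHeatKernelHyperuniformityConditions}.
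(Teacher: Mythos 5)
Your proposal is correct and follows essentially the same route as the paper: express $\Varmu(\bS h_\tau)$ via the (radialized) Bartlett spectral measure using $\hat{h}_\tau(\lambda)=\e^{-\tau\lambda^2}$, substitute $s=\sqrt{\tau}$, and invoke Proposition~\ref{PropTemperedHyperuniformity} with $F(t)=\e^{-t^2}$ and $\alpha=d$, identifying $\sigma_\mu((0,\varepsilon])=o(\varepsilon^d)$ with spectral hyperuniformity via $\sigma_{\cP}((0,\varepsilon])\asymp\varepsilon^d$. The extra remarks on $\sigma_\mu(\{0\})=0$ and on which direction of Proposition~\ref{PropTemperedHyperuniformity} is delicate are accurate but not needed beyond what the paper does.
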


\begin{proof}
Since the heat kernel $h_{\tau}$ is radial, the variance of its linear statistic with respect to $\mu$ can be written in terms of the radialized Bartlett spectral measure $\sigma_{\mu}^{\rad}$ as 
\begin{align*}
\tau^{d/2} \Varmu(\bS h_{\tau}) =  \tau^{d/2}  \int_0^{\infty} \hat{h}_{\tau}(\lambda)^2 d\sigma_{\mu}^{\rad}(\lambda) = \tau^{d/2}  \int_0^{\infty} \e^{-2\tau \lambda^2} d\sigma_{\mu}^{\rad}(\lambda) \, . 
\end{align*}

As mentioned, the measure $\sigma_{\mu}$ is tempered as a consequence of the Paley-Wiener Theorem, so letting $s = \sqrt{\tau}$, $\alpha = d$, $F(t) = \e^{-t^2}$, then by Proposition \ref{PropTemperedHyperuniformity} we have that
\begin{align*}
\lim_{\tau \rightarrow +\infty} \tau^{d/2} \Varmu(\bS h_{\tau}) = \lim_{s \rightarrow +\infty} s^{\alpha} \int_0^{\infty} F(s\lambda)^2 d\sigma_{\mu}^{\rad}(\lambda) = 0
\end{align*}
if and only if $\sigma_{\mu}^{\rad}((0, \varepsilon]) = o(\varepsilon^{d})$ as $\varepsilon \rightarrow 0^+$.
\end{proof}

\subsubsection{Hyperbolic heat kernel hyperuniformity}

The heat kernel $h_{\tau} : G_d \rightarrow \C$ for $\bH^d$ is, similarly to the Euclidean case, defined as the inverse spherical transform of a normalized Gaussian on the space $\Omega_d \subset \C$ of parameters for the positive-definite spherical functions. More specifically, it is the smooth rapidly decaying bi-$K_d$-invariant function
\begin{align*}
h_{\tau}(g) = \int_0^{\infty} \e^{-t(\frac{(d-1)^2}{4} + \lambda^2)} \omega_{\lambda}(g) \frac{d\lambda}{|c_d(\lambda)|^2} \, . 
\end{align*}
To get a sense of the asymptotic behavior of this function, we mention the following uniform bounds due to Davies-Mandouvalos in \cite[Theorem 3.1, p. 186]{Davies1988HeatKB},
\begin{align*}
h_{\tau}(g) \asymp \tau^{-\frac{d}{2}}(1 + \tau + \ell(g))^{\frac{d - 3}{2}} (1 + \ell(g)) \e^{-\frac{(d-1)^2}{4}\tau - \frac{d-1}{2}\ell(g) - \frac{1}{4\tau} \ell(g)} 
\end{align*}
for all $g \in G_d$ and all $\tau \in (0, +\infty)$.

\begin{theorem}
\label{ThmHyperbolicHeatKernelHyperuniformity}
An invariant locally square-integrable random Harish-Chandra distribution $\mu$ on $\H^d$ is spectrally hyperuniform if and only if 
\begin{align*} 
\lim_{\tau \rightarrow +\infty} \, \tau^{3/2} \e^{\frac{(d-1)^2}{2} \tau} \,  \Varmu(\bS h_{\tau}) = 0 \, .   
\end{align*}
\end{theorem}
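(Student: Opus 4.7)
The plan is to mirror the Euclidean proof of Theorem \ref{ThmEuclideanHeatKernelHyperuniformity} almost verbatim, reducing to the general rank one criterion in Proposition \ref{PropTemperedHyperuniformity} with $F(t) = \e^{-t^2}$ and $\alpha = 3$. The whole point of the somewhat unusual normalization $\tau^{3/2}\e^{(d-1)^2\tau/2}$ in the statement is that it precisely absorbs both the spectral density exponent $\alpha = 3$ (coming from $\sigma_{\cP}((0,\varepsilon]) \asymp \varepsilon^3$ via $|c_d(\lambda)|^{-2} \asymp \lambda^2$ near the origin) and the ground-state eigenvalue $(d-1)^2/4$ at the bottom of the $L^2$-spectrum of the hyperbolic Laplacian.

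First I would translate the variance to the spectral side. By the defining formula of $h_\tau$, its spherical transform on the principal series is $\hat{h}_\tau(\lambda) = \e^{-\tau((d-1)^2/4 + \lambda^2)}$. Since $\mu$ is assumed to be a Harish-Chandra $\sC^2$-distribution, the Trombi--Varadarajan--Anker Theorem \ref{TheoremTrombiVaradarajanAnkerPaleyWiener} (applied with $p=2$) guarantees that the Bartlett spectral measure $\sigma_\mu$ is a tempered positive Radon measure supported in the principal series interval $(0, +\infty) \subset \Omega_d$; in particular the complementary-series atomic mass required to vanish in Definition \ref{DefHyperbolicSpectralHyperuniformity} is automatic. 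Applying Theorem \ref{Theorem1} (or rather the distributional version from Subsection \ref{The Bochner-Minlos Theorem}) then gives
\begin{align*}
\tau^{3/2}\, \e^{\frac{(d-1)^2}{2}\tau}\, \Varmu(\bS h_\tau) \;=\; \tau^{3/2} \int_0^\infty \e^{-2\tau \lambda^2}\, d\sigma_\mu(\lambda).
\end{align*}

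Next I would invoke Proposition \ref{PropTemperedHyperuniformity} with the change of variables $s = \sqrt{\tau}$, the test function $F(t) = \e^{-t^2}$ (even Schwartz, non-vanishing on $[-1,1]$), and $\alpha = 3$. The proposition then asserts that the right-hand side above tends to $0$ as $\tau \to +\infty$ if and only if $\sigma_\mu((0,\varepsilon]) = o(\varepsilon^3)$ as $\varepsilon \to 0^+$. Because $\sigma_{\cP}((0,\varepsilon]) \asymp \varepsilon^3$ in real hyperbolic geometry, this small-$\varepsilon$ estimate is equivalent to
\begin{align*}
\lim_{\varepsilon \to 0^+} \frac{\sigma_\mu((0,\varepsilon])}{\sigma_{\cP}((0,\varepsilon])} = 0,
\end{align*}
which combined with the already-established vanishing on $i[0,(d-1)/2]$ is exactly spectral hyperuniformity in the sense of Definition \ref{DefHyperbolicSpectralHyperuniformity}.

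The main nontrivial input is really Proposition \ref{PropTemperedHyperuniformity}, whose proof uses the technical Lemma \ref{LemmaHeatKernelHyperuniformityConditions}; everything else is bookkeeping. The only genuine \emph{hyperbolic-specific} subtleties I expect to have to flag carefully are (i) justifying that $\bS h_\tau$ is a well-defined square-integrable discrepancy statistic for a $\sC^2$-distribution, which follows from $h_\tau \in \sC^2(G_d, K_d)$ together with the Davies--Mandouvalos Gaussian decay estimates already quoted in the paper, and (ii) identifying the correct pre-factor by tracking the $(d-1)^2/4$ shift: writing $|\hat{h}_\tau(\lambda)|^2 = \e^{-\frac{(d-1)^2}{2}\tau}\e^{-2\tau\lambda^2}$ and pulling out the exponential gives exactly the normalization appearing in the statement, confirming that $\alpha = 3$ rather than some dimension-dependent exponent is the right one.
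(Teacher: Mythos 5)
Your reduction of the principal-series part to Proposition \ref{PropTemperedHyperuniformity} with $F(t) = \e^{-t^2}$, $s = \sqrt{\tau}$ and $\alpha = 3$ is exactly the paper's argument, and your bookkeeping of the prefactor $\tau^{3/2}\e^{(d-1)^2\tau/2}$ is correct. The gap is in how you dispose of the complementary series. You assert that, because $\mu$ is a ``Harish-Chandra distribution,'' Theorem \ref{TheoremTrombiVaradarajanAnkerPaleyWiener} with $p=2$ forces $\supp(\sigma_\mu) \subset (0,+\infty)$, so that the condition $\sigma_\mu(i[0,\tfrac{d-1}{2}]) = 0$ in Definition \ref{DefHyperbolicSpectralHyperuniformity} is automatic. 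But the setup preceding the theorem restricts attention to distributions on the strong dual of $\sC^{4/(d+1)}(G_d)$, precisely so that spherical transforms extend to the full strip $T_{(d-1)/2}$ containing $\Omega_d \setminus \{i(d-1)/2\}$; for such $\mu$ the spectral measure can a priori charge the complementary-series interval, and only the subclass of $\sC^2$-distributions has this vanishing for free. By assuming $p=2$ you silently strengthen the hypothesis and erase one half of the equivalence: the implication that heat kernel hyperuniformity \emph{forces} $\sigma_\mu(i[0,\tfrac{d-1}{2}]) = 0$ is a genuine part of the theorem, not a consequence of the standing assumptions.

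Concretely, your identity $\tau^{3/2}\e^{\frac{(d-1)^2}{2}\tau}\Varmu(\bS h_\tau) = \tau^{3/2}\int_0^\infty \e^{-2\tau\lambda^2}\,d\sigma_\mu(\lambda)$ is missing a term. Since $\hat h_\tau(i\lambda) = \e^{-\tau(\frac{(d-1)^2}{4}-\lambda^2)}$ on the complementary series, the full decomposition is
\begin{align*}
\tau^{3/2}\e^{\frac{(d-1)^2}{2}\tau}\Varmu(\bS h_\tau) = \tau^{3/2}\int_0^\infty \e^{-2\tau\lambda^2}\,d\sigma_\mu^{(p)}(\lambda) + \tau^{3/2}\int_0^{\frac{d-1}{2}} \e^{2\tau\lambda^2}\,d\sigma_\mu^{(c)}(\lambda) \, ,
\end{align*}
where $\sigma_\mu^{(c)}$ is the restriction to $i[0,\tfrac{d-1}{2}]$. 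Both terms are nonnegative, and the second is bounded below by $\tau^{3/2}\sigma_\mu^{(c)}(i[0,\tfrac{d-1}{2}])$, which diverges unless $\sigma_\mu^{(c)} = 0$. This one-line observation is how the paper extracts the first condition of spectral hyperuniformity from the heat kernel limit in the converse direction; you should add it (or explicitly restrict the theorem to $\sC^2$-distributions and accept the weaker statement). With that repaired, the rest of your argument coincides with the paper's proof.
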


\begin{proof}
The proof is similar to the Euclidean case in Theorem \ref{ThmEuclideanHeatKernelHyperuniformity}. In terms of the Bartlett spectral measure, the heat kernel variance with respect to $\mu$ can be written as 
\begin{equation}
\begin{split}
\label{EqHyperbolicHeatKernelHyperuniformityIdenitity}
 \tau^{3/2} \e^{\frac{(d-1)^2}{2} \tau} \,  \Varmu(\bS h_{\tau}) &= \tau^{3/2}  \e^{\frac{(d-1)^2}{2} \tau} \Big( \int_0^{\infty} \hat{h}_{\tau}(\lambda)^2 d\sigma_{\mu}^{(p)}(\lambda) +  \int_0^{\frac{d-1}{2}} \hat{h}_{\tau}(i\lambda)^2 d\sigma_{\mu}^{(c)}(\lambda) \Big)\\
&=   \tau^{3/2}  \int_0^{\infty} \e^{-2\tau\lambda^2} d\sigma_{\mu}^{(p)}(\lambda) +  \tau^{3/2}  \int_0^{\frac{d-1}{2}} \e^{2\tau\lambda^2} d\sigma_{\mu}^{(c)}(\lambda) \, , 
\end{split}
\end{equation}
where $\sigma_{\mu}^{(p)}$ and $\sigma_{\mu}^{(c)}$ are the restrictions of $\sigma_{\mu}$ to $(0, +\infty)$ and $i[0, \tfrac{d-1}{2}]$ respectively.

First, assume that $\mu$ is spectrally hyperuniform, in other words that $\sigma_{\mu}^{(c)} = 0$ and $\sigma_{\mu}^{(p)}((0, \varepsilon]) = o(\varepsilon^3)$ as $\varepsilon \rightarrow 0^+$. Since $\sigma_{\mu}$ is positive and tempered and 
$$\hat{h}_{\tau}(\lambda) = \e^{-\tau(\frac{(d-1)^2}{4} + \lambda^2)}$$
is a positive decaying Schwartz function, Proposition \ref{PropTemperedHyperuniformity} yields
\begin{align*}
\lim_{\tau \rightarrow +\infty} \, \tau^{3/2} \e^{\frac{(d-1)^2}{2} \tau} \,  \Varmu(\bS h_{\tau}) = \lim_{s \rightarrow +\infty} s^3  \int_0^{\infty} \e^{-2 (s\lambda)^2} d\sigma_{\mu}^{(p)}(\lambda) = 0 \, . 
\end{align*}
Conversely, suppose that 
$$\tau^{3/2} \e^{\frac{(n-1)^2}{2} \tau} \Varmu(\bS h_{\tau}) \rightarrow 0 $$
as $\tau \rightarrow +\infty$. Then it is clear from Equation \eqref{EqHyperbolicHeatKernelHyperuniformityIdenitity} that we must have $\sigma_{\mu}^{(c)} = 0$, and so 
\begin{align*}
\lim_{s \rightarrow +\infty} s^3  \int_0^{\infty} \e^{-2 (s\lambda)^2} d\sigma_{\mu}^{(p)}(\lambda) = 0 \, . 
\end{align*}
Since $\sigma_{\mu}^{(p)}$ is a tempered measure on $(0, +\infty)$, Proposition \ref{PropTemperedHyperuniformity} implies that
\begin{align*}
\frac{\sigma_{\mu}^{(p)}((0, \varepsilon])}{\varepsilon^3} \longrightarrow 0 \, , \quad \varepsilon \rightarrow 0^+ \, ,
\end{align*}
so $\mu$ is spectrally hyperuniform.
\end{proof}

%\textbf{Declarations}
%\smaller{\textbf{Conflicts of interest}: The author states that there is no conflict of interest.}

\printbibliography

{\small\textsc{Department of Mathematics, Chalmers and University of Gothenburg, Gothenburg, Sweden \\
 \textit{Email address}: {\tt micbjo@chalmers.se}}}

{\small\textsc{Department of Mathematics, Chalmers and University of Gothenburg, Gothenburg, Sweden \\
 \textit{Email address}: {\tt bylehn@chalmers.se}}}

\end{document}